\newtheorem{proposition}{Proposition}[part]
\newtheorem{definition}{Definition}[part]
\newtheorem{remark}{Remark}[part]
\newcommand{\abs}[1]{\left|#1\right|}
\begin{document}

\title[Mechanics of Crystalline Solids]
{On the Mechanics of Crystalline Solids with a Continuous Distribution of Dislocations}

\arxurl{1212.5125}

\author[D.~Christodoulou and I.~Kaelin]{Demetrios Christodoulou and Ivo Kaelin}

\address{Department of Mathematics \\ ETH Zurich \\ R\"amistrasse 101 \\ 8092 Zurich \\ Switzerland}  
\addressemail{demetri@math.ethz.ch and ivo.kaelin@math.ethz.ch}

\begin{abstract}
We formulate the laws governing the dynamics of a crystalline solid in which a continuous distribution of dislocations is present. Our formulation is based on new differential geometric concepts, which in particular relate to Lie groups. We then consider the static case, which describes crystalline bodies in equilibrium in free space. The mathematical problem in this case is the free minimization of an energy integral, and the associated Euler-Lagrange equations constitute a nonlinear elliptic system of partial differential equations. We solve the problem in the simplest cases of interest.
\end{abstract}

\maketitle

\section*{Introduction and Summary}

Classical elasticity theory rests on the hypothesis that a solid may exist in a stress-free state in Euclidean space. The dynamics of the solid is then described by a time-dependent mapping taking the position of each material particle in this relaxed reference state to its actual position at a given time. Now the hypothesis of the existence of a relaxed state is violated when dislocations are present in the solid. These cause internal stresses even in the absence of external forces.

Solids in which a continuous distribution of dislocations is present have been treated in the linear approximation (\cite{K1}, \cite{K2}, \cite{N1}). However, the conceptual framework of this linear theory is inadequate for the formulation of an exact theory. The appropriate conceptual framework was introduced in \cite{C2} and an exact nonlinear theory was proposed. The basic concept introduced in \cite{C2} is that of the material manifold which captures those properties of a crystalline solid which are intrinsic to it, being independent of the way it is embedded in space. The dynamics of the elastic body is then described by a mapping from space-time into this material manifold.

While an exact theory of crystalline solids with a continuous distribution of dislocations was formulated in \cite{C2}, the theory was left undeveloped up to the present time. The aim of the present work is to develop the theory and derive results which may be brought into contact with experimental data. Our focus in the present paper is on the static case, where we have a crystalline solid with a uniform distribution of elementary dislocations in equilibrium in free space. As a basic example we study, in the continuum limit, a two-dimensional crystalline solid with a uniform distribution of edge dislocations, one of the two elementary types of dislocations. In this case the material manifold is the affine group of the real line, the hyperbolic plane. We then study, in the continuum limit, a three-dimensional crystalline solid with a uniform distribution of screw dislocations, the other elementary type of dislocation. In this case the material manifold is the Heisenberg group. 

The purpose of the present work is to introduce to the mathematics and theoretical physics community a field where beautiful differential geometric structures, in particular Lie groups, form the basis of a classical physical theory. Moreover, the laws of the theory form a nonlinear system of variational partial differential equations, which in the static case is elliptic and in the dynamical case is of hyperbolic type. Both cases constitute a worthy challenge for the geometric analyst.

\cutpage 

\setcounter{page}{2}

\noindent

This paper is organized as follows. In Part I, we give a general introduction to the theory of crystalline solids containing an arbitrary distribution of dislocations, based on the work of Christodoulou \cite{C1}, \cite{C2}. We introduce the basic concepts of material manifold and crystalline structure. In the present work, the fundamental notion is the canonical form, which is used to define the dislocation density. We illustrate the theory by giving two basic examples of solids with a uniform distribution of dislocations. In order to state the laws governing the dynamics, the thermodynamic space is introduced and the energy function defined. The dynamics is described by a mapping from space-time into the material manifold.
The equivalence relations for crystalline structures and for the mechanical properties of a solid are discussed to give the proper physical interpretation of the theory.
Finally, the Eulerian picture is given including the non-relativistic limit. In the Eulerian picture the material manifold is eliminated.

In Part II, we focus our attention on the static case. We derive the boundary value problem from an action principle and give the Legendre-Hadamard conditions for the energy function. We then consider the two examples of uniform distributions of elementary dislocations and motivate the choice of our model energy function. We separately discuss the special cases of uniform distributions of edge and screw dislocations in two and three dimensions, respectively. 
In concluding the second part, we derive the scaling properties of the theory.

Part III is devoted to the analysis of equilibrium configurations of a crystalline solid with a uniform distribution of elementary dislocations in two and three dimensions. We solve the problem in two dimensions and give the method for the solution of the anisotropic problem in the case of a uniform distribution of screw dislocations in three dimensions.

\newpage


\part{The General Setting} \label{Setting}
\setcounter{section}{0}


\section{The Material Manifold}

Let $\mathcal N$ be an oriented $n$-dimensional differentiable manifold, called the {\bf material manifold}. It describes a material together with those of its properties which are intrinsic to it being independent of the way in which it is to be embedded in physical space. A point $y \in \mathcal N$ represents a particle. We denote by $\mathcal X (\mathcal N)$ the $C^{\infty}$-vectorfields on the material manifold $\mathcal N$. Let
\begin{equation} \label{evalmap}
\begin{array}{rcl} 
\epsilon_y :  \mathcal X (\mathcal N) & \to & T_y\mathcal N \\
X & \mapsto & \epsilon_y(X)=X(y)
\end{array}
\end{equation}
be the evaluation map at a point $y \in \mathcal N$.

\begin{definition}
A {\bf crystalline structure} on $\mathcal N$ is a distinguished linear subspace $\mathcal V$ of  $\mathcal X (\mathcal N)$ such that the evaluation map $\epsilon_y$ restricted to $\mathcal V$ is an isomorphism for each $y \in \mathcal N$.
\end{definition}

\begin{remark}
The orientation of $\mathcal N$ induces an orientation in $\mathcal V$ such that $\epsilon_y$ is orientation preserving.
$\mathcal N$ admits a crystalline structure if and only if $\mathcal N$ is parallelizable, see \cite{C2}.
\end{remark}
We introduce a $1$-form $\nu$ on $\mathcal N$ with values in $\mathcal V$ defined by
\begin{equation} \label{defnu}
\nu_y(Y_y)=\epsilon_y^{-1}(Y_y) \quad : \forall Y_y \in T_y\mathcal N \, .
\end{equation}
In the case that $(\mathcal N, \mathcal V)$ is a Lie group with corresponding Lie algebra, $\nu$ is the \emph{Maurer-Cartan} form.

\begin{remark} \label{mfo}
The most fundamental object is the $1$-form $\nu$. We may in fact replace $\mathcal V$ by an abstract real vectorspace $W$ of the same dimension, $n$, as the manifold $\mathcal N$. Then we define a canonical form $\nu$ to be a $W$-valued $1$-form on $\mathcal N$ such that 
\begin{equation*}
\nu_y := \left. \nu \right|_{T_y\mathcal N} : T_y \mathcal N \to W
\end{equation*}
is an isomorphism for each $y \in \mathcal N$. Given an element $v \in W$, we may then define a vectorfield $Y_v$ on $\mathcal N$ by
\begin{equation*}
Y_v(y)=\nu_y^{-1}(v) \quad : \, \forall y \in \mathcal N
\end{equation*} 
We then define the crystalline structure $\mathcal V$ by
\begin{equation*}
\mathcal V = \left\{ X_v : v \in W \right\} \, .
\end{equation*}
Then the canonical form corresponds to the $1$-form $\nu$ defined in (\ref{defnu}).
\end{remark}

We say that the crystalline structure $\mathcal V$ on $\mathcal N$ is {\bf complete} if each $X \in \mathcal V$ is a complete vectorfield on $\mathcal N$. Then each element of $\mathcal V$ generates a $1$-parameter group of diffeomorphisms of $\mathcal N$, which represents (in the continuum limit) a group of translations of the crystal lattice with parameter proportional to the number of atoms traversed.

A complete crystalline structure $\mathcal V$ on the material manifold $\mathcal N$ defines an {\bf exponential map}
\begin{equation*}
\exp : \mathcal N \times \mathcal V \to \mathcal N
\end{equation*}
as follows. Let $\exp(y,X)$ be the point in $\mathcal N$ that is at parameter value $1$ from $y$ along the integral curve of $X$ initiating at $y$. For each $y \in \mathcal N$, let 
\begin{equation*} \begin{array}{rcl}
\exp_y : \mathcal V & \to & \mathcal N \\
X  & \mapsto & \exp_y(X)=\exp(y, X) \, .
\end{array}
\end{equation*}
We have
\begin{equation*}
\exp_y(0)=y \quad , \quad d\exp_y(0)=\epsilon_y \, .
\end{equation*}
Thus $d\exp_y(0)$ is an isomorphism for each $y \in \mathcal N$. By the implicit function theorem it follows that, for each $y \in \mathcal N$, there is a neighborhood $\mathcal U_y$ of the zero vector in $\mathcal V$ such that $\exp_y$ restricted to $\mathcal U_y$ is a diffeomorphism onto its image in $\mathcal N$.

Now choose a totally antisymmetric $n$-linear form $\omega$ on $\mathcal V$ which is positive when evaluated on a positive basis.
The $n$-form $\omega$ defines a {\bf volume form} $d\mu_{\omega}$, called {\bf mass form} on $\mathcal N$ by:
\begin{equation}\label{volumeform}
\begin{array}{rcl}
d\mu_{\omega}\left(Y_{1,y},\ldots, Y_{n,y}\right) & = & \omega\left(\epsilon_y^{-1}(Y_{1,y}),\ldots,\epsilon_y^{-1}(Y_{n,y})\right) \\
 &  &  : \, \forall y \in \mathcal N, \forall Y_{1,y}, \ldots, Y_{n,y} \in T_y\mathcal N \, .
\end{array}
\end{equation}
The volume assigned by $d\mu_{\omega}$ to a domain $\mathcal D \subset \mathcal N$,
\begin{equation*}
\int_{\mathcal D}d\mu_{\omega} \, ,
\end{equation*}
is the rest mass of $\mathcal D$. 

\begin{definition}
Given a crystalline structure $\mathcal V$ on $\mathcal N$ we can define a mapping 
\begin{equation*}
\Lambda:\mathcal N  \to  \mathcal L(\mathcal V \wedge \mathcal V, \mathcal V) 
\end{equation*}
by:
\begin{equation} \label{defdisden}
\Lambda(y)(X,Y)=\epsilon_y^{-1}\left([X,Y](y)\right) \in \mathcal V \quad , \quad \forall y \in \mathcal N, X,Y \in \mathcal V \, .
\end{equation}
We call $\Lambda$ {\bf dislocation density}.
\end{definition}

Suppose $X, Y \in \mathcal V$ are complete and generate $1$-parameter groups of diffeomorphisms $\Phi_t$, $\Psi_t$, $t \in \mathbb R$ from $\mathcal N$ to $\mathcal N$. For $y \in \mathcal N$
\begin{equation*}
(t,s) \mapsto \Psi_{-s}\left(\Phi_{-t}\left(\Psi_s\left(\Phi_t(y)\right)\right)\right)
\end{equation*}
coincides with $(t,s)\mapsto \Xi_{ts}$, where $\Xi_t$ is the $1$-parameter group of diffeomorphisms of $\mathcal N$ generated by $\Lambda(y)(X,Y)$.

The dislocation density is a concept that arises in the continuum limit of a distribution of elementary dislocations in a crystal lattice. An elementary dislocation has the property that, if we start at an atom in the crystal lattice and move according to one group of lattice transformations $k$ atoms in one direction, then according to a different group $l$ atoms in a second direction, according to the first $-k$ atoms and finally $-l$ atoms in the second directions, then we arrive at a different atom than we started from, but which, provided the circuit encloses a single elementary dislocation, is reached at in a single step corresponding to a lattice translation. This step is called {\bf Burger's vector}.

If $\Lambda$ is constant on $\mathcal N$ then, for all $X, Y \in \mathcal V$, there is a $Z \in \mathcal V$ such that
\begin{equation*} 
[X,Y]=Z \, ,
\end{equation*}
corresponding to a uniform distribution of elementary dislocations of the same kind, see below.
Thus $\mathcal V$ constitutes in this case a {\bf Lie algebra}, i.e.~a vectorspace $\mathcal V$ over $\mathbb R$ with a bracket
\begin{equation*}
[\, .\, ,\, .\, ] \, : \, \mathcal V \wedge \mathcal V \to \mathcal V 
\end{equation*}
satisfying the Jacobi identity.

By the fundamental theorems of Lie group theory, upon choosing an identity element $e \in \mathcal N$, the material manifold $\mathcal N$ can then be given the structure of a {\bf Lie group} such that $\mathcal V$ is the space of vectorfields on $\mathcal N$ which generate the right action of the group on itself. $\mathcal V$ is then the space of vectorfields on $\mathcal N$ which are left invariant, i.e.~invariant under left group multiplication.
The dual space $\mathcal V^*$ is then the space of left invariant $1$-forms on $\mathcal N$.
 
Let us consider $d\nu$, a $2$-form on $\mathcal N$ : for any pair of vectorfields $X, Y$ on $\mathcal N$ we have 
\begin{equation} \label{derofnu}
d\nu(X,Y)=X(\nu(Y))-Y(\nu(X))-\nu([X,Y]) 
\end{equation}
by the formula for the exterior derivative of a $1$-form. 
In particular, this holds for $X, Y \in \mathcal V$. Now for $X \in \mathcal V$ we have
\begin{equation*}
\nu(X)(y)=\nu_y(X_y)=\epsilon_y^{-1}(X_y)=X \quad , \quad \forall y \in \mathcal N, X \in \mathcal V \, ,
\end{equation*}
a constant $\mathcal V$-valued function on $\mathcal N$. Similarly with $X$ replaced by $Y$.
Therefore, from (\ref{derofnu}) we have
\begin{equation}\label{dernu}
d\nu (X,Y) = -\nu([X,Y])\quad , \, \forall X,Y \in \mathcal V \, .
\end{equation}
On the other hand,
\begin{equation} \label{Lambdaeqnu}
\Lambda(y)(X,Y)=\epsilon_y^{-1}([X,Y](y))=\nu_y([X,Y](y)) \quad , \,  \forall y \in \mathcal N, X,Y \in \mathcal V \, .
\end{equation}
Thus, comparing (\ref{dernu}) and (\ref{Lambdaeqnu}), we obtain
\begin{equation}\label{dnumlam}
\left(d\nu(X,Y)\right)(y)=-\nu_y\left([X,Y](y)\right)=-\Lambda(y)(X,Y) \, .
\end{equation}
Let us then define the $\mathcal V$-valued $2$-form $\lambda$ on $\mathcal N$ by:
\begin{equation*}
\lambda_y(Y_{1,y},Y_{2,y}) = \Lambda(y)\left(\epsilon_y^{-1}(Y_{1,y}),\epsilon_y^{-1}(Y_{2,y})\right) \quad  : \, \forall  Y_{1,y}, Y_{2,y} \in T_y\mathcal N \, ,
\end{equation*}
at any point $y \in \mathcal N$. We conclude from (\ref{dnumlam}) that 
\begin{equation*}
d\nu= -\lambda \, .
\end{equation*}
Let $\Gamma$ be a closed curve in $\mathcal N$ and let $\Sigma$ be any surface spanning $\Gamma$, i.e.~
$\partial \Sigma=\Gamma$. We finally conclude 
\begin{equation} \label{sumofBv}
 -\int_{\Gamma} \nu = \int_{\Sigma}\lambda  \, .
\end{equation}
The right-hand side of (\ref{sumofBv}) is the sum of all Burger's vectors enclosed by the curve $\Gamma$ (or threading $\Sigma$).

\subsection{Uniform Dislocation Distributions and Lie Groups}
At the atomic level, two kinds of elementary dislocations are found. They are called {\bf edge} and {\bf screw} dislocations.
In the following, we consider these two types of dislocations taking them as our model cases. For a uniform distribution of these two types of elementary dislocations, we determine the corresponding Lie groups, the affine group and the Heisenberg group, respectively. For a detailed description see \cite{IK}.

\subsubsection{Edge Dislocations and the Affine Group} \label{edag}
The most basic type of a dislocation in a $2$-dimensional crystal lattice is an edge dislocation. It
appears in a $2$-dimensional lattice in which an extra half-line of atoms has been inserted along the positive $1$st axis. A circuit of translations in the directions of the $1$st and $2$nd axis, alternately, which encloses the origin, ends at an atom which is reached in a single step by a translation in the direction of the $2$nd axis. On the other hand, circuits not enclosing the origin close. Mathematically, this phenomenon is represented by the commutation relation $[E_1,E_2]=E_2$, where $E_1, E_2$ are the vectorfields along the coordinate axis.

We want to show that a uniform distribution of edge dislocations in a $2$-dimensional lattice gives rise in the continuum limit to the {\bf affine group}. This group is characterized by transformations of the real line of the form
\begin{equation*} \begin{array}{rcl}
\mathbb R & \to & \mathbb R \\
x & \mapsto & e^{y^1}x+y^2 \, ,
\end{array}
\end{equation*}
where $(y^1, y^2) \in \mathbb R^2$ are two parameters. The subgroups of the affine group are $t\mapsto e^{y^1}t$ (multiplication) and $s\mapsto s + y^2$ (translation).

We have as the group manifold $\mathbb R^2$ equipped with the following multiplication
\begin{equation*}
(y^1,y^2)(\tilde y^1,\tilde y^2)=(y^1+\tilde y^1,y^2+e^{y^1}\tilde y^2) \, .
\end{equation*}
\begin{equation*}
X=\frac{\partial}{\partial y^1} \quad \textrm{and} \quad Y=e^{y^1}\frac{\partial}{\partial y^2}
\end{equation*}
generate the right action with
\begin{equation*}
[X,Y] = \frac{\partial}{\partial y^1}e^{y^1}\frac{\partial}{\partial y^2}-e^{y^1}\frac{\partial}{\partial y^2}\frac{\partial}{\partial y^1} =  e^{y^1}\frac{\partial}{\partial y^2}= Y \, .
\end{equation*}
The Lie algebra of the affine group is thus generated by the vectorfields $X, Y$, which satisfy the commutation relation
\begin{equation*}
[X,Y]=Y \, .
\end{equation*}
Therefore, the affine group is the material manifold $\mathcal N$ endowed with the crystalline structure.

If we take $\{E_1=X, E_2=Y\}$ as a basis of $\mathcal V$, we have the following dual basis $\{\omega^1, \omega^2\}$ for $\mathcal V^*$
\begin{equation*}
\omega^1 = dy^1  \quad , \quad  \omega^2=e^{-y^1}dy^2  \, .
\end{equation*}
The corresponding left invariant metric
\begin{equation} \label{hypmetafgr}
\stackrel{\circ}{n}=(\omega^1)^2+(\omega^2)^2=(dy^1)^2+e^{-2y^1}(dy^2)^2 
\end{equation}
on $\mathcal N$ makes $\mathcal N$ the {\bf hyperbolic plane}.

The dislocation density $\lambda$ is 
\begin{equation*}
\lambda(E_1,E_2)(y)=\Lambda(y)(E_1,E_2) = \epsilon_y^{-1}\left([E_1,E_2](y)\right) = \epsilon_y^{-1}\left(\epsilon_y(E_2)\right) = E_2 \, .
\end{equation*}
Since $\omega_1 \wedge \omega_2=e^{-y_1}\left(dy^1\wedge dy^2\right)$, it follows
\begin{equation*}
\lambda = e^{-y_1}\left(dy^1\wedge dy^2\right)E_2 \, ,
\end{equation*}
and therefore
\begin{equation*}
\int_{\Sigma}\lambda =\int_{\Sigma}e^{-y_1}\left(dy^1\wedge dy^2\right)E_2=A(\Sigma)E_2 \, , 
\end{equation*}
where $A(\Sigma)$ is the area of the surface $\Sigma$, a domain in $\mathcal N$. This makes sense since the sum of the Burger vectors associated to a domain in a uniform distribution of edge dislocations should be proportional to the area of the domain.

\subsubsection{Screw Dislocations and the Heisenberg Group} \label{sdhg}
The second kind of elementary dislocation is called a {\bf screw dislocation}. It appears in a $3$-dimensional lattice in the following way. A circuit of translations along the direction of the $1$st and $2$nd axis, alternately, which encloses the $3$rd, ends at an atom which is reached at a single step by a translation in the direction of the $3$rd axis, while circuits not enclosing the $3$rd axis close.
 Mathematically, this phenomenon is represented by the commutation relations $[E_1, E_2]=E_3$, $[E_1, E_3]=[E_2, E_3]=0$, where $E_1, E_2, E_3$ are the vectorfields along the coordinate axis.

We want to show that a uniform distribution of dislocations of the screw type give rise in the continuum limit to the {\bf Heisenberg group}. This group is represented as a group of unitary transformations on the space of square integrable complex valued functions $\Psi$ on $\mathbb R$ as follows
\begin{equation*} \begin{array}{rcl}
L^2(\mathbb R, \mathbb C) & \to & L^2(\mathbb R, \mathbb C) \\
\Psi(x) & \mapsto & \Psi'(x)=e^{i(y^2 x+y^3)}\Psi(x+y^1)  \, ,
\end{array}
\end{equation*}
where $(y^1,y^2,y^3) \in \mathbb R^3$ are three parameters. The subgroups of the Heisenberg group are $t \mapsto \Psi(x+t)$ (translation in position), $s \mapsto e^{is x}\Psi (x)$ (translation in momentum), and $u \mapsto e^{iu}\Psi(x)$ (multiplication by a phase).

We have as the group manifold $\mathbb R^3$ equipped with the following multiplication
\begin{equation*}
(y^1,y^2,y^3)(\tilde y^1,\tilde y^2,\tilde y^3)=(y^1+\tilde y^1,y^2+\tilde y^2,y^3+\tilde y^3+y^1\tilde y^2) \, .
\end{equation*}
\begin{equation*}
X=\frac{\partial}{\partial y^1} \quad , \quad Y=\frac{\partial}{\partial y^2}+y^1\frac{\partial}{\partial y^3} \quad \textrm{and} \quad Z=\frac{\partial}{\partial y^3}
\end{equation*}
generate the right action with
\begin{equation*}
\left[X,Y\right] = \left[\frac{\partial}{\partial y^1},\frac{\partial}{\partial y^2}+y^1\frac{\partial}{\partial y^3}\right]=\frac{\partial}{\partial y^3}= Z \quad , \quad \left[X,Z\right] = \left[Y,Z\right] = 0  \, .
\end{equation*}
The Heisenberg group is thus generated by the vectorfields $E_1=X, E_2=Y, E_3=Z$, which fulfill the commutation relations
\begin{equation*}
[E_1, E_2]=E_3 \, , \, [E_1, E_3]=0 \, , \, [E_2, E_3]=0 \, ,
\end{equation*}
and generate the right multiplication. The linear span of $(E_1, E_2, E_3)$ forms a Lie algebra (crystalline structure) corresponding to a uniform distribution of screw dislocations in a three-dimensional crystal lattice. Therefore, we can associate the Heisenberg group, which is the group corresponding to the crystalline structure, with the material manifold $\mathcal N$. 

If we take $\{E_1, E_2, E_3\}$ as a basis of $\mathcal V$, we have the following dual basis $\{\omega^1, \omega^2, \omega^3\}$ for $\mathcal V^*$
\begin{equation*}
\omega^1 =dy^1 \quad  , \quad  \omega^2=dy^2 \quad , \quad \omega^3=dy^3-y^1 dy^2 \, ,
\end{equation*}
and the corresponding metric
\begin{equation}\label{homspacemet}
\stackrel{\circ}{n}=(\omega^1)^2+(\omega^2)^2+(\omega^3)^2=(dy^1)^2+(dy^2)^2+(dy^3-y^1 dy^2)^2 \, , 
\end{equation}
which is a {\bf Bianchi type VII} metric. The manifold $\mathcal N$ endowed with this metric is a {\bf homogeneous space}.

Here, the dislocation density $\lambda$ turns out to be
\begin{equation*}
\lambda(E_1,E_2)(y)=E_3 \, , \, \lambda(E_1,E_3)(y)=\lambda(E_2,E_3)(y)=0 \, ,
\end{equation*}
and therefore
\begin{equation*}
\lambda =\left(\omega_1 \wedge \omega_2\right) E_3 = \left(dy^1\wedge dy^2\right)E_3 \, .
\end{equation*}
The integral of $\lambda$ over a surface $\Sigma$ in $\mathcal N$ is
\begin{equation*}
\int_{\Sigma}\lambda =\int_{\Sigma}\left(dy^1\wedge dy^2\right)E_3=A(\Pi\Sigma)E_3 \, ,
\end{equation*}
where $\Pi$ is the projection map of the line bundle of the homogeneous space (\ref{homspacemet}) over $\mathbb R^2$ with the standard metric on the base (the curvature of the bundle being $-dy^1\wedge dy^2$).


\section{The Thermodynamic State Space}\label{tdss}
Consider the space $S_2^+(\mathcal V)$ of inner products on the crystalline structure $\mathcal V$. 
The {\bf thermodynamic state space} is defined as the product 
\begin{equation*}
S_2^+(\mathcal V) \times \mathbb R^+
\end{equation*}
and its elements are $(\gamma,\sigma)$, where $\gamma \in S_2^+(\mathcal V)$ is the {\bf configuration} and $\sigma \in \mathbb R^+$ is the {\bf entropy per unit mass}.

Each $\gamma \in S_2^+(\mathcal V)$ defines a totally antisymmetric $n$-linear form $\omega_{\gamma}$ on the crystalline structure $\mathcal V$ by the condition that if $(E_1, \ldots, E_n)$ is a positive basis for $\mathcal V$, orthonormal with respect to $\gamma$, i.e.~$\gamma_{AB}:=\gamma(E_A,E_B)=\delta_{AB}$, then 
\begin{equation*}
\omega_{\gamma}(E_1,\ldots,E_n)=1 .
\end{equation*}
It follows that there is a positive function $V$ on $S_2^+(\mathcal V)$ such that 
\begin{equation*}
\omega_{\gamma}=V(\gamma)\omega \, .
\end{equation*}
The positive real number $V(\gamma)$ is the {\bf volume per unit mass} corresponding to the configuration $\gamma$. 

The {\bf thermodynamic state function} $\kappa$ is a real-valued function on the thermodynamic state space $S_2^+(\mathcal V)\times \mathbb R^+$. The Lagrangian which determines the dynamics will be defined through this function.

The {\bf thermodynamic stress} corresponding to a thermodynamic state $(\gamma,\sigma)$ is the element $\pi(\gamma,\sigma)$ of $(S_2(\mathcal V))^*$ defined by
\begin{equation*}
\frac{\partial\left(\kappa(\gamma,\sigma)V(\gamma)\right)}{\partial\gamma}=-\frac 1 2 \pi(\gamma,\sigma)V(\gamma) \, .
\end{equation*}
The {\bf temperature} corresponding to a thermodynamic state $(\gamma,\sigma)$ is the real number $\vartheta(\gamma,\sigma)$ given by
\begin{equation*}
\vartheta(\gamma,\sigma)=\frac{\partial\left(\kappa(\gamma,\sigma)V(\gamma)\right)}{\partial\sigma} \, , 
\end{equation*}
with the requirement that $\vartheta(\gamma,\sigma)$ is positive and tends to zero as $\sigma$ tends to zero.


\section{The Dynamics}
In the {\bf general theory of relativity} the space-time manifold is an oriented $(n+1)$-dimensional differentiable manifold $\mathcal M$ endowed with a {\bf Lorentzian metric} $g$, that is a continuous assignment of a symmetric bilinear form $g_x$ of index $1$ in $T_x\mathcal M, \, \forall x \in \mathcal M$. The Lorentzian metric divides $T_x\mathcal M$ into three different subsets $I_x, N_x, S_x$, the set of {\bf timelike}, {\bf null} and {\bf spacelike} vectors at $x$ respectively, according to whether $g_x$ restricted to the corresponding subset is negative, zero or positive. The subset $N_x$ is a double cone called the null cone at $x$. The subset $I_x$ is the interior of this cone, an open set of two components, the future and past component. $S_x$ is the subset outside the null cone, a connected open set for $n>1$. A curve $\gamma$ is called {\bf causal} if its tangent vector belongs to $I_x \cup N_x, \, \forall x \in \gamma$, and it is called timelike if its tangent vector belongs to $I_x$, $\forall x \in \gamma$. We assume that $(\mathcal M, g)$ is time oriented, that is a continuous choice of future component $I_x^+$ of $I_x$ can and has been made $\forall x \in \mathcal M$. This choice determines the future component $N_x^+$ of $N_x$ at each $x \in \mathcal M$. A timelike or causal curve is then future or past directed, according to which component its tangent vector belongs to. A hypersurface $\mathcal H$ is called spacelike if at each $x \in \mathcal H$ the restriction of $g_x$ to $T_x\mathcal H$ is positive definite. A spacelike hypersurface in $\mathcal M$ is called a {\bf Cauchy hypersurface} if each causal curve in $\mathcal M$ intersects $\mathcal H$ exactly once. We assume that $(\mathcal M, g)$ possesses such a Cauchy hypersurface \cite{CBG}.

The motion of the material continuum is described by a mapping $f$ from the space-time manifold $\mathcal M$ into the material manifold $\mathcal N$,
\begin{equation} \label{defmapf}
f: \mathcal M \to \mathcal N \, .
\end{equation}
This mapping specifies which material particle is at a given event in space-time. 
It is subject to the following conditions:
\begin{itemize}
\item[{\bf i)}] The mapping $f$, restricted to a Cauchy hypersurface $\mathcal H$, $\left. f \right|_{\mathcal H}$, is one to one.
\item[{\bf ii)}] The differential of the mapping $f$, $df(x)$, has a $1$-dimensional kernel contained in $I_x$, $\forall x \in \mathcal M$. 
\end{itemize}
Then, for each $y \in f(\mathcal M) \subset \mathcal N$, $f^{-1}(y)$ is a timelike curve in $\mathcal M$.
The {\bf material velocity} $u$ is the future directed unit tangent vectorfield of the timelike curve $f^{-1}$:
\begin{equation*}
\mathrm{span}(u_x)=\ker\left(df(x)\right)=T_xf^{-1}(y) \, , \, f(x)=y \, , \, g(u_x,u_x)=-1 \, , \forall x \in \mathcal M \, .
\end{equation*}
The {\bf simultaneous space} at $x$ is the orthogonal complement of the linear span of $u_x$:
\begin{equation*}
\Sigma_x = \left(\textrm{span}(u_x)\right)^{\perp} \, .
\end{equation*}
Note that $g_{\Sigma_x}=\left.g_x \right|_{\Sigma_x}$ is positive definite. The restriction of the differential $df(x)$ to $\Sigma_x$ is an isomorphism of $\Sigma_x$ onto $T_y\mathcal N$, where $y=f(x)$.
\begin{itemize}
\item[{\bf iii)}] The isomorphism $\left.df(x)\right|_{\Sigma_x}$ is orientation preserving.
\end{itemize}

The {\bf equations of motion}, a second order system of partial differential equations for the mapping $f$, are derived from a {\bf Lagrangian} $L$, a function on $\mathcal M$ which is constructed from $f$. The action $\mathcal A$ in a domain $\mathcal D \subset \mathcal M$ is the integral
\begin{equation*}
\int_{\mathcal D} L \, d\mu_g \, ,
\end{equation*}
where $d\mu_g$ is the volume form of $(\mathcal M,g)$.

Any mapping $f$ fulfilling the three requirements stated above, defines an orientation preserving isomorphism $j_{f,x}$ of $\mathcal V$ onto $\Sigma_x$ by
\begin{equation}\label{isoj}
j_{f,x}=\left(\left.df(x)\right|_{\Sigma_x}\right)^{-1}\circ \epsilon_{f(x)} \, .
\end{equation}
Define 
\begin{equation}
j_{f,x}^* g_x =\gamma \, ,
\end{equation}
$\gamma \in S_2^+(\mathcal V)$ and depends only on $f$ and $x$.
Note that 
\begin{equation*}
\mu(x) = \frac{1}{V\left(j^*_{f,x}g_x\right)}
\end{equation*}
is the rest mass density at $x$.

To take into account thermal effects in the Lagrangian picture we invoke the adiabatic condition which states that the entropy per unit mass of any element of the material remains unchanged. This allows us to consider the entropy per unit mass $\sigma$ as a given function on the material manifold $\mathcal N$.

The {\bf Lagrangian} function $L$ on $\mathcal M$ is defined by:
\begin{equation}\label{lagrangian}
L(x)=\kappa\left(j^*_{f,x}g_x,\sigma(f(x))\right) \, , \, \forall x \in \mathcal M \, ,
\end{equation}
where $\kappa$ is the thermodynamic state function on $S_2^+(\mathcal V) \times \mathbb R^+$. Note that $L(x)$ depends on $g$ only through $g_x$ and not on derivatives of $g$.

The {\bf energy-momentum-stress tensor} $T_x$ at $x$ is an element of the dual space $\left(S_2(T_x\mathcal M)\right)^*$ defined by
\begin{equation}\label{partiall}
\frac{\partial\left(L(x)d\mu_g(x)\right)}{\partial g_x}=-\frac 1 2 T_x d\mu_g(x) \, .
\end{equation}

From \cite{C2} we have the following
\begin{proposition} \label{ems}
We can write the energy-momentum-stress tensor at $x$ as follows:
\begin{equation} \label{enmomstrtens}
T_x=\rho(x) u_x\otimes u_x +S_x 
\end{equation}
with the mass-energy density $\rho$ (since we are in the relativistic framework this includes the rest mass energy) given by
\begin{equation} \label{rhokappav}
\rho(x)=\kappa(j^*_{f,x}g_x,\sigma(f(x))) \, .
\end{equation}
The stress tensor is given by
\begin{equation*}
S_x(\dot g_x)=\pi(j^*_{f,x}g_x,\sigma(f(x)))(j^*_{f,x}\dot g_x) \, .
\end{equation*}
Since $j^*_{f,x}\dot g_x=j^*_{f,x}\dot g_{\Sigma_x}$, $S_x$ can be viewed as an element of $\left(S_2(\Sigma_x)\right)^*$.
\end{proposition}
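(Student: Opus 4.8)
\noindent\emph{Proof strategy.}\ \ The plan is to evaluate the pointwise variation in (\ref{partiall}) by the product rule, varying the symmetric bilinear form $g_x$ at $x$ while holding the map $f$ and its first derivatives at $x$ fixed, and then to match the result with (\ref{enmomstrtens}). Write $g=g_x$, $\dot g=\dot g_x\in S_2(T_x\mathcal M)$ and consider the one-parameter family $g+s\dot g$, $s$ small. Since $L(x)$ and $d\mu_g(x)$ depend on $g$ only through $g_x$,
\[
\frac{d}{ds}\Big|_{0}\big(L(x)\,d\mu_g(x)\big)\;=\;\Big(\frac{d}{ds}\Big|_{0}L(x)\Big)\,d\mu_g(x)\;+\;L(x)\,\frac{d}{ds}\Big|_{0}d\mu_{g+s\dot g}(x),
\]
and I would treat the two terms separately.

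For the second term, the identity $\partial\sqrt{-\det g}/\partial g_{\mu\nu}=\tfrac12\sqrt{-\det g}\,g^{\mu\nu}$ gives $\frac{d}{ds}\big|_{0}d\mu_{g+s\dot g}=\tfrac12(\mathrm{tr}_g\dot g)\,d\mu_g$. For the first term I would differentiate through $\gamma=j^*_{f,x}g_x$, which needs two ingredients. The first is the variation of the volume-per-unit-mass function: in any fixed basis of $\mathcal V$, $V(\gamma)$ is a fixed positive multiple of $\sqrt{\det(\gamma_{AB})}$, so $\partial_\gamma V=\tfrac12 V\gamma^{-1}$, i.e. $\partial_\gamma V(\dot\gamma)=\tfrac12 V\,\mathrm{tr}_\gamma\dot\gamma$; combining this with the defining relation $\partial_\gamma(\kappa V)=-\tfrac12\pi V$ of the thermodynamic stress gives $\partial_\gamma\kappa=-\tfrac12\pi-\tfrac12\kappa\,\gamma^{-1}$. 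The second ingredient is the variation of $\gamma$ itself.

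This last point is the crux, and I expect it to be the main obstacle, because $j_{f,x}$ depends on $g_x$ through the moving subspace $\Sigma_x$. One uses that $\mathrm{span}(u_x)=\ker df(x)$ is independent of $g$, so that only the normalization of $u_x$ and the $g$-orthogonal complement $\Sigma_x$ change. A direct computation shows the perturbed complement is $\Sigma_x'=\{\,w+s\,\dot g(u_x,w)\,u_x:\ w\in\Sigma_x\,\}+O(s^2)$, and hence the unique isomorphism $j'_{f,x}:\mathcal V\to\Sigma_x'$ with $df(x)\circ j'_{f,x}=\epsilon_{f(x)}$ is $j'_{f,x}X=j_{f,x}X+s\,\dot g(u_x,j_{f,x}X)\,u_x+O(s^2)$. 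Substituting into $(g+s\dot g)(j'_{f,x}X,j'_{f,x}Y)$ and using $g(u_x,j_{f,x}Y)=0$, the cross terms cancel and one obtains $\frac{d}{ds}\big|_{0}\big((j'_{f,x})^{*}(g+s\dot g)\big)=j^*_{f,x}\dot g=j^*_{f,x}\dot g_{\Sigma_x}$; the point is precisely that the $g_x$-dependence of $j_{f,x}$ is ``vertical'' along $u_x$ and therefore invisible to $\gamma$, which lives on $\Sigma_x\perp u_x$. This argument also establishes the last sentence of the proposition, namely that $S_x$ will depend on $\dot g$ only through $\dot g_{\Sigma_x}$.

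Finally I would assemble the pieces. The chain rule gives $\frac{d}{ds}\big|_{0}L(x)=\partial_\gamma\kappa(\gamma,\sigma)(j^*_{f,x}\dot g)=-\tfrac12\pi(\gamma,\sigma)(j^*_{f,x}\dot g)-\tfrac12\kappa\,\mathrm{tr}_{g|_{\Sigma_x}}\dot g_{\Sigma_x}$, where I use that the $\gamma$-trace of $j^*_{f,x}\dot g$ equals the $g|_{\Sigma_x}$-trace of $\dot g_{\Sigma_x}$, since $j_{f,x}$ carries a $\gamma$-orthonormal basis of $\mathcal V$ to a $g|_{\Sigma_x}$-orthonormal basis of $\Sigma_x$. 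Using the splitting $\mathrm{tr}_g\dot g=\mathrm{tr}_{g|_{\Sigma_x}}\dot g_{\Sigma_x}-\dot g(u_x,u_x)$, which holds because $g(u_x,u_x)=-1$, the product rule yields
\[
\frac{d}{ds}\Big|_{0}\big(L(x)\,d\mu_g(x)\big)=\Big(-\tfrac12\,\pi(\gamma,\sigma)(j^*_{f,x}\dot g)-\tfrac12\,\kappa(\gamma,\sigma)\,\dot g(u_x,u_x)\Big)\,d\mu_g(x),
\]
the spatial-trace contributions having cancelled. Comparing with (\ref{partiall}) gives $T_x(\dot g)=\kappa(\gamma,\sigma)\,\dot g(u_x,u_x)+\pi(\gamma,\sigma)(j^*_{f,x}\dot g)$; since $(u_x\otimes u_x)(\dot g)=\dot g(u_x,u_x)$, this is exactly (\ref{enmomstrtens}) with $\rho$ given by (\ref{rhokappav}) and $S_x(\dot g)=\pi(\gamma,\sigma)(j^*_{f,x}\dot g)$, which by the previous paragraph may be viewed as an element of $(S_2(\Sigma_x))^*$.
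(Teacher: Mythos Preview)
Your argument is correct. The crucial point---that the $g_x$-dependence of $j_{f,x}$ enters only through a shift along $u_x$ and hence does not contribute to $\dot\gamma$---is handled cleanly, and the cancellation of the spatial-trace terms between $\partial_\gamma\kappa$ and the variation of $d\mu_g$ is exactly what produces the decomposition (\ref{enmomstrtens}).

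Note, however, that the paper does not actually prove this proposition: it simply quotes the result from \cite{C2}. So there is no paper-proof to compare against in detail. What you have written is a self-contained derivation that the paper omits; it follows the natural route one would expect (product rule on $L\,d\mu_g$, chain rule through $\gamma$, and the orthogonal splitting $T_x\mathcal M=\mathrm{span}(u_x)\oplus\Sigma_x$), and would serve perfectly well as the missing proof.
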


\begin{remark}
Hence we see from Proposition \ref{ems} that in the case of pure continuum mechanics (in the absence of electromagnetic fields) the energy per unit mass is $e=\kappa V$.
\end{remark}

Define the {\bf principal pressures} $p_i$, $i=1,\ldots,n$ as the eigenvalues of $S_{\flat \flat}$ relative to $\left. g\right|_{\Sigma}$, where the subscript $\flat \flat$ means lowering of the indices with respect to $g$. Note that the principal pressures $p_i : i=1,\ldots,n$ can equivalently be described as the eigenvalues of $\pi_{\flat \flat}$ relative to $\gamma$.

The positivity condition on $T_x$ requires that 
\begin{eqnarray}
H_x^+ & \to & T_x \mathcal M \nonumber \\
v^{\mu} & \mapsto & -T_{\nu}^{\mu} v^{\nu} \label{poscond}
\end{eqnarray}
maps into $\overline{I_x^+}$. This is equivalent to the condition that $|p_i|\leq \rho \quad : \, i=1,\ldots,n$. Here, in the case of crystalline solids, we assume the stronger condition that the range of (\ref{poscond}) lies in $I_x^+$ which is in turn equivalent to $|p_i| < \rho \quad : \, i=1,\ldots,n$.

\subsection{Variation of the Lagrangian}
Let $v=df(x) \in U_{(x,y)} \subset \mathcal L(T_x\mathcal M, T_y \mathcal N)$, $y=f(x)$, where $U_{(x,y)}$ is the open subset of the linear space $\mathcal L (T_x \mathcal M , T_y \mathcal N)$ consisting of those $v \in \mathcal L (T_x \mathcal M , T_y \mathcal N)$ which verify the two conditions 
\begin{itemize}
\item[1)] $\textrm{ker}\,v$ is a time like line in $T_x \mathcal M$,
\item[2)] with $\Sigma_x$ the $g_x$-orthogonal complement of $\textrm{ker}\,v$ in $T_x \mathcal M$, the isomorphism 
\begin{equation*}
\left. v \right|_{\Sigma_x} : \Sigma_x \to T_y \mathcal N
\end{equation*}
is orientation-preserving.
\end{itemize}
These above conditions 1) and 2) correspond to the conditions {\bf ii)} and {\bf iii)}, respectively. Note that {\bf i)} is not a local condition so it does not reduce to a condition on $v$.

Since the isomorphism 
\begin{equation*}
j_{f,x}= \left(\left. v\right|_{\Sigma_x}\right)^{-1} \circ \epsilon_y \, ,
\end{equation*}
as defined in (\ref{isoj}), depends only on $v$, we may write 
\begin{equation*}
i(v)=j_{f,x} \, ,
\end{equation*}
so  the configuration $\gamma_{f,x}$ corresponding to the isomorphism $v \in  U_{(x,y)}$ is given by the positive quadratic form on $\mathcal V$
\begin{equation} \label{defgamma}
\gamma (v) = i^*(v) \left.g\right|_{\Sigma_x} \, .
\end{equation}
Now $v \mapsto \gamma(v)$ is a mapping of
\begin{equation} \label{bundleB}
\mathcal B := \bigcup_{(x,y)\in \mathcal M \times \mathcal N} U_{(x,y)} \, ,
\end{equation} 
into $S_2^+(\mathcal V)$. The bundle $\mathcal B$, a bundle over $\mathcal M \times \mathcal N$, is the bundle over which the Lagrangian is defined.

The mapping $v \mapsto \gamma(v)$ is described by the functions $\gamma_{AB}(v)$ on $\mathcal B$, where
\begin{equation} \label{gammaofv}
\gamma_{AB}(v)=\gamma(v)\left(E_A,E_B\right) \, ,
\end{equation}
where $E_A: A=1,\ldots,n$ is a basis of $\mathcal V$ such that $\omega(E_1,\ldots,E_n)=1$.

By the properties of $v$ we have a positive basis $(X_A(x) \, : \, A=1,\ldots,n)$ of $\Sigma_x$ defined by:
\begin{equation*}
v \cdot X_A(x) =E_A(y) \, : \, A=1,\ldots,n \, ,
\end{equation*}
where $E_A(y)=\epsilon_y(E_A) \in T_y\mathcal N (A=1,\ldots,n)$, and thus from (\ref{defgamma}) and (\ref{gammaofv})
\begin{equation*}
\gamma_{AB}=\left.g\right|_{\Sigma_x}(X_A,X_B) \, .
\end{equation*}
Note that $(u,X_1,\ldots, X_n)$ is a frame field for $\mathcal M$.
Let $\dot v \in \mathcal L(T_x \mathcal M, T_y \mathcal N)$ be a variation of $v \in  U_{(x,y)}$. To describe $\dot v$ we must give $\dot v \cdot u \in T_y\mathcal N$ and $\dot v \cdot X_A \in T_y \mathcal N$,
\begin{eqnarray*}
\dot v \cdot u(x) & = & \dot v_0^A E_A(y) \, ,\\
\dot v \cdot X_A(x) & = & \dot v_A^B E_B(y) \, .
\end{eqnarray*} 
This is because $(E_A(y): A=1,\ldots,n)$ is a basis for $T_y\mathcal N$. So the $(\dot v_0^A : A=1, \ldots,n ; \dot v_A^B : A,B=1, \ldots,n)$ can be thought of as the the components of $\dot v$.

In view of (\ref{lagrangian}) and (\ref{rhokappav}) the Lagrangian function $L$ is $L(v)=\rho\left(\gamma(v),\sigma(y)\right)$, where $\rho$ is the relativistic energy-density which includes the rest mass contribution, a function of a thermodynamic state $\left(\gamma(v),\sigma(y)\right) \in S_2^+(\mathcal V)\times \mathbb R^+$. $L$ is a function on the bundle $\mathcal B$ (\ref{bundleB}). 
The Lagrangian form, a top degree form on $\mathcal M$, is $L d\mu_g$. 

The Lagrangian $L$ depends on $v$ through the configuration $\gamma$, $L(\gamma_{AB})$, and the first variation reads 
\begin{equation*}
\dot L=\frac{\partial \rho}{\partial \gamma_{AB}}\dot \gamma_{AB} \, ,
\end{equation*}
where the first variation of $\gamma_{AB}$ is given by
\begin{equation*} \label{firstvargamma}
\dot \gamma_{AB}=-\dot v_A^C\gamma_{BC}-\dot v_B^C \gamma_{AC} \, 
\end{equation*}
(see \cite{C1}). 
To formulate the hyperbolicity condition (see below), we need to consider the second variation of $L$ with respect to $v$, 
\begin{equation} \label{defofh*}
\ddot L=\frac{\partial^2 L}{\partial v^2}\cdot(\dot v,\dot v)=h(\dot v,\dot v) \, ,
\end{equation}
where, in general,
\begin{equation} \label{h*indetail}
h(\dot v,\dot v)=h^{00}_{AB} \dot v_0^A \dot v_0^B+ 2 h^{C0}_{AB} \dot v_C^A \dot v_0^B+ h^{CD}_{AB} \dot v_C^A \dot v_D^B \, .
\end{equation}
For $L=L(\gamma_{ab})$, using the formula for the second variation of $\gamma_{AB}$,
\begin{equation*} \label{secondvargamma}
\ddot \gamma_{AB}=2\left(\gamma_{AC}\gamma_{BD}\dot v_0^C\dot v_0^D+\gamma_{CD}\dot v_A^C\dot v_B^D+\gamma_{AC}\dot v_B^D\dot v_D^C+\gamma_{BC}\dot v_A^D\dot v_D^C \right) 
\end{equation*}
(see \cite{C1}), we obtain
\begin{eqnarray*}
\ddot L & = & \frac{\partial \rho}{\partial \gamma_{AB}}\ddot \gamma_{AB}+\frac{\partial^2 \rho}{\partial \gamma_{AB} \partial \gamma_{CD}}\dot \gamma_{AB}\dot \gamma_{CD} \\
 & = & 2\frac{\partial \rho}{\partial \gamma_{AB}}\left(\gamma_{AC}\gamma_{BD}\dot v_0^C\dot v_0^D+\gamma_{AC}\dot v_B^D\dot v_D^C+\gamma_{BC}\dot v_A^D\dot v_D^C+\gamma_{CD}\dot v_A^C\dot v_B^D \right) \\
 &   & + \frac{\partial^2 \rho}{\partial \gamma_{AB} \partial \gamma_{CD}}\big( \dot v_A^E\gamma_{EB}+\dot v_B^E \gamma_{EA} \big)\left( \dot v_C^F\gamma_{FD}+\dot v_D^F \gamma_{FC} \right) \\
 & = & 2 \frac{\partial \rho}{\partial \gamma_{CD}} \gamma_{AC}\gamma_{BD}\dot v_0^A\dot v_0^B + 2 \frac{\partial \rho}{\partial \gamma_{CD}} \gamma_{AB}\dot v_C^A\dot v_D^B \\
 &  & + 2 \left(\frac{\partial \rho}{\partial \gamma_{CE}} \gamma_{BE} \delta_A^D + \frac{\partial \rho}{\partial \gamma_{DE}} \gamma_{AE}\delta_B^C\right) \dot v_C^A\dot v_D^B\\
 &  & + 4  \frac{\partial^2 \rho}{\partial \gamma_{CE}\partial \gamma_{DF}}\gamma_{AE}\gamma_{BF}\dot v_C^A\dot v_D^B  \, .
\end{eqnarray*}
Comparing coefficients with (\ref{h*indetail}) using (\ref{defofh*}), we see that:
\begin{eqnarray*}
h^{00}_{AB} & = & 2 \frac{\partial \rho}{\partial \gamma_{CD}} \gamma_{AC}\gamma_{BD} \, ,\qquad h^{C0}_{AB}=0 \, , \\
h^{CD}_{AB} & = &  4  \frac{\partial^2 \rho}{\partial \gamma_{CE}\partial \gamma_{DF}}\gamma_{AE}\gamma_{BF}  \\ 
 &  & + 2\left(\frac{\partial \rho}{\partial \gamma_{CD}} \gamma_{AB}+  \frac{\partial \rho}{\partial \gamma_{CE}} \gamma_{BE} \delta_A^D + \frac{\partial \rho}{\partial \gamma_{DE}} \gamma_{AE}\delta_B^C \right) \, ,
\end{eqnarray*}
where the last expression will be made use of in the derivation of the Legendre-Hadamard conditions in Part \ref{static}.

\subsection{Hyperbolicity and Characteristic Speeds}
Set 
\begin{equation*}
G_{AB}= -h_{AB}^{00} \quad , \quad M_{AB}^{CD}=h_{AB}^{CD} \, .
\end{equation*}
Then
\begin{equation*} 
h(\dot v, \dot v)= -G_{AB}\dot v_0^A \dot v_0^B + M_{AB}^{CD} \dot v_C^A \dot v_D^B \, .
\end{equation*}
The hyperbolicity condition in general is that there is a pair $(T,\theta) \in T_x \mathcal M \times T_x^* \mathcal M$ with $\theta \cdot T > 0$ such that $h$ is \emph{negative-definite} on 
\begin{equation} \label{hnegdef}
\left\{ \dot v : \dot v = \theta \otimes Y , Y \in T_y \mathcal N \right\}
\end{equation}
and \emph{positive-definite} on 
\begin{equation} \label{hposdef}
\left\{ \dot v : \dot v = \kappa \otimes Y , \kappa \cdot T=0,  Y \in T_y \mathcal N \right\} 
\end{equation}
(see \cite{C1}).
We stipulate here that the above conditions hold with $(T,\theta)$ defined by the rest frame of the material at $x$, i.e.~$T=u_x$, $\Sigma_x=\textrm{ker}\,\theta$, $\theta\cdot T=1$.

For $\dot v$ in (\ref{hnegdef}) we have:
\begin{equation*}
\dot v \cdot u = Y \quad , \quad \dot v \cdot X_A = 0 \, ,
\end{equation*}
i.e.
\begin{equation*}
\dot v_0^A = Y^A \quad , \quad \dot v_A^B = 0 \, ,
\end{equation*}
and the condition that $h$ is negative definite on (\ref{hnegdef}) is the condition that
\begin{equation} \label{firstcondG}
G_{AB} Y^A Y^B > 0 \quad : \, \forall Y \neq 0 \, ,
\end{equation}
i.e.~that $G$ is positive-definite.

Since
\begin{equation*}
G_{AB}=\pi_{AB} + \rho \gamma_{AB}
\end{equation*}
(we are raising and lowering indices with respect to $\gamma_{AB}$) and the principal pressures $p_1,\ldots, p_n$ are the eigenvalues of $\pi_{AB}$ with respect to $\gamma_{AB}$, this condition is:
\begin{equation*}
\min_i p_i > -\rho \, ,
\end{equation*}
which of course follows from $\max_i |p_i| < \rho$.
For $\dot v$ in (\ref{hposdef}) we have:
\begin{equation*}
\dot v \cdot u = 0 \quad , \quad \dot v \cdot X_A = \kappa_A Y \, ,
\end{equation*}
where $\kappa_A= \kappa \cdot X_A$, i.e.~
\begin{equation*}
\dot v_0^A = 0 \quad , \quad \dot v_A^B = \kappa_A Y^B \, ,
\end{equation*}
and the condition that $h$ is positive definite on (\ref{hposdef}) is the condition that
\begin{equation} \label{hposdeftilde} 
M_{AB}^{CD} \kappa_C \kappa_D Y^A Y^B >0 \quad : \, \forall \kappa \neq 0, \forall Y \neq 0 \, .
\end{equation}
Set 
\begin{equation*}
H_{AB}(\kappa)=M_{AB}^{CD} \kappa_C \kappa_D \, .
\end{equation*}
Then condition (\ref{hposdeftilde}) is that $H_{AB}(\kappa)$ is positive-definite for all $\kappa \neq 0$.

If the first condition (\ref{firstcondG}) is satisfied, the second condition (\ref{hposdeftilde}) becomes the condition that for $\kappa \neq 0$ the eigenvalues $\lambda_1(\kappa),\ldots, \lambda_n(\kappa)$ of $H_{AB}(\kappa)$ with respect to $G_{AB}$ are all positive. Note that $H_{AB}(\kappa)$ is homogeneous of degree $2$ in $\kappa$, hence so are the $\lambda_1(\kappa), \ldots , \lambda_n(\kappa)$.

Now the characteristic matrix is
\begin{equation*}
\chi_{AB}(\xi)=h_{AB}^{\mu\nu} \xi_{\mu}\xi_{\nu} \, .
\end{equation*}  
We use the basis $(u,X_1,\ldots,X_n)$ for $T_x\mathcal M$. We denote by $\omega$ the frequency and by $\kappa_A$, $A=1,\ldots,n$, the wave number components, i.e.~
\begin{equation*}
\omega := \xi_0 := \xi \cdot u \quad , \quad \kappa_A := \xi_A := \xi \cdot X_A \, .
\end{equation*}
Then
\begin{equation*}
\chi_{AB}=-G_{AB} \omega^2 + H_{AB}(\kappa) \, ,
\end{equation*}
so
\begin{equation*}
\det \chi = \det G \cdot \prod_{i=1}^n \left( \lambda_i(\kappa)-\omega^2\right) \, .
\end{equation*}
We see that the second condition is equivalent to the $2n$ roots $\pm \sqrt{\lambda_i(\kappa)}$, $i:1,\ldots,n$, of the characteristic polynomial $\det \chi$ as a polynomial in $\omega$ being real. The characteristic speeds are
\begin{equation} \label{defetai}
\eta_i =\frac{\sqrt{\lambda_i(\kappa)}}{|\kappa|} \quad , \quad \textrm{where} \,\, |\kappa|=\sqrt{\left(\gamma^{-1}\right)^{AB}\kappa_A \kappa_B} \, .
\end{equation}
Note that the $\eta_i$ are homogeneous of degree $0$ in $\kappa$.
The causality condition is that the inner characteristic core in $T_x^* \mathcal M$ contains the null cone of $g$ in $T_x^* \mathcal M$. This reads (in units $c=1$)
\begin{equation*}
\eta_i < 1  \quad , \, \forall i=1,\ldots,n.
\end{equation*}

\section{Properties of the Energy per unit Mass}

{\postulate \label{pmine} We stipulate that the energy per unit mass $e=\rho V$ has a strict minimum at a certain inner product $\stackrel{\circ}{\gamma}$.}
{\remark The above postulate has an intuitive physical interpretation. Note that the set of inner products $\gamma$ is an open positive cone in the linear space of quadratic forms. For large $\gamma$ (large expansion) and also for $\gamma$ near the boundary (large compression) the energy $e$ is physically expected to blow up, so we can restrict ourselves to a compact set of inner products, where $e$ necessarily attains a minimum.}

\vspace{2mm}  

We choose $(E_1,\ldots,E_n)$ to be an orthonormal basis relative to $\stackrel{\circ}{\gamma}$, so $\stackrel{\circ}{\gamma}_{AB}=\delta_{AB}$. This is compatible with the previous condition $\omega(E_1,\ldots,E_n)=1$ for a suitable volume form $\omega$ on $\mathcal V$. 
This choice of $\omega$ corresponds to a choice of unit of mass so that the mass density associated to the configuration $\stackrel{\circ}{\gamma}$ is equal to $1$.
$\stackrel{\circ}{\gamma}$ defines a metric $\stackrel{\circ}{n}$ on $\mathcal N$ by
\begin{equation*}
\stackrel{\circ}{n}=\sum\limits_{A,B=1}^n\stackrel{\circ}{\gamma}_{AB}\omega^A \otimes \omega^B = \sum\limits_{A=1}^n \omega^A \otimes \omega^A \, ,
\end{equation*}
where $(\omega^1,\ldots,\omega^n)$ is the dual basis to $(E_1,\ldots,E_n)$. If $\mathcal V$ is a Lie algebra, so that $\mathcal N$ is a Lie group, then $\stackrel{\circ}{n}$ is a left-invariant metric, i.e.~invariant under the actions of $\mathcal N$ on itself by left multiplications by elements of $\mathcal N$.
Thus $(\mathcal N, \stackrel{\circ}{n})$ is a homogeneous Riemannian manifold (which in general is not isotropic).
The Riemannian manifold $(\mathcal N, \stackrel{\circ}{n})$ has curvature except when $\mathcal V$ is Abelian, so there are no dislocations. 

Note that $d\mu_{\stackrel{\circ}{n}}=\sqrt{\det \stackrel{\circ}{n}} \, d^n y = d\mu_{\omega}$, therefore we have for any domain $\Omega$ in $\mathcal N$
\begin{equation*}
M(\Omega)=\int\limits_{\Omega} d\mu_{\omega} = \int\limits_{\Omega} d\mu_{\stackrel{\circ}{n}} \, , 
\end{equation*}
that is, in our choice of units the mass of $\Omega$ is equal to its volume with respect to $\stackrel{\circ}{n}$.

\subsection{The Isotropic Case} \label{isotropiccase}
Let us define the orthogonal group corresponding to $\stackrel{\circ}{\gamma}$,
\begin{equation*}
O_{\stackrel{\circ}{\gamma}}=\left\{  O \in \mathcal L(\mathcal V, \mathcal V) : \, \stackrel{\circ}{\gamma}(OX,OY)= \, \stackrel{\circ}{\gamma}(X,Y) \, , \, \forall X,Y \in \mathcal V \right\} \, .
\end{equation*}
$O_{\stackrel{\circ}{\gamma}}$ acts on $S_2^+(\mathcal V)$ in the following way: $\gamma  \mapsto  O \gamma $, where
\begin{equation*}
\left( O\gamma\right)(X,Y) = \gamma(OX,OY) \, .
\end{equation*}
Since $\stackrel{\circ}{\gamma} \, \mapsto O \! \stackrel{\circ}{\gamma} \, =\stackrel{\circ}{\gamma}$, the symmetric bilinear form $\stackrel{\circ}{\gamma}$ is a fixed point of the action of $O_{\stackrel{\circ}{\gamma}}$ on $S_2^+(\mathcal V)$.

If the energy density $e(\gamma)$ is invariant under $O_{\stackrel{\circ}{\gamma}}$ we are in the case of {\bf isotropic elasticity}.
Then $e(\gamma)$ depends only on the eigenvalues $\lambda_1,\ldots ,\lambda_n$ of $\gamma$ relative to $\stackrel{\circ}{\gamma}$,
\begin{equation*}
e(\gamma)=e(\lambda_1, \ldots , \lambda_n) \, ,
\end{equation*}
where $e(\lambda_1, \ldots , \lambda_n)$ is totally symmetric in its arguments. This is the simplest case of an energy density. 

Consider 
\begin{equation*}
q=f^*\stackrel{\circ}{n} \, .
\end{equation*}
This is a $2$-covariant symmetric tensorfield on $\mathcal M$, i.e.~at each $x \in \mathcal M$ $q_x$ is a quadratic form in $T_x \mathcal M$. The vector $u_x$ belongs to the null space of $q_x$ and the restriction $\left. q_x \right|_{\Sigma_x}$ is positive-definite.

Let $\Lambda_1,\ldots,\Lambda_n$ be the eigenvalues of $\left.q_x\right|_{\Sigma_x}$ relative to $\left.g_x\right|_{\Sigma_x}$. We then have
\begin{proposition}
The eigenvalues $\Lambda_1,\ldots,\Lambda_n$ are the inverses of the eigenvalues $\lambda_1,\ldots,\lambda_n$ of $\gamma=j_{f,x}^* g_x$ relative to $\stackrel{\circ}{\gamma}$. In particular,
\begin{equation*}
\sqrt{\Lambda_1 \cdot \ldots \cdot \Lambda_n}=\frac 1{\sqrt{\lambda_1 \cdot \ldots \cdot \lambda_n}} = \frac 1 v = N \, .
\end{equation*}
Therefore, in the variational principle, the crystalline structure $\mathcal V$ on $\mathcal N$ is eliminated in favor of the Riemannian metric $\stackrel{\circ}{n}$.
\end{proposition}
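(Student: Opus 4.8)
The plan is to reduce the proposition to a short linear-algebra computation on the isomorphism $j_{f,x}\colon\mathcal V\to\Sigma_x$, exploiting the fact that $\stackrel{\circ}{n}$ is built precisely so that the evaluation map $\epsilon_y$ is an isometry. From the definition $\stackrel{\circ}{n}=\sum_A\omega^A\otimes\omega^A$ together with $\epsilon_y^{-1}(Y)=\sum_A\omega^A_y(Y)\,E_A$ (the $\omega^A$ being dual to the $\stackrel{\circ}{\gamma}$-orthonormal basis $(E_A)$ of $\mathcal V$) one reads off
\[
\stackrel{\circ}{n}_y(Y_1,Y_2)=\stackrel{\circ}{\gamma}\bigl(\epsilon_y^{-1}Y_1,\epsilon_y^{-1}Y_2\bigr)\qquad\forall\,Y_1,Y_2\in T_y\mathcal N,
\]
that is, $\stackrel{\circ}{n}_y=(\epsilon_y)_*\stackrel{\circ}{\gamma}$, so $\epsilon_y\colon(\mathcal V,\stackrel{\circ}{\gamma})\to(T_y\mathcal N,\stackrel{\circ}{n}_y)$ is an isometry.

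First I would set $\phi:=df(x)|_{\Sigma_x}\colon\Sigma_x\to T_y\mathcal N$ with $y=f(x)$, so that $j_{f,x}=\phi^{-1}\circ\epsilon_y$ and hence $j_{f,x}^{-1}=\epsilon_y^{-1}\circ\phi$. Then for $v,w\in\Sigma_x$,
\[
q_x|_{\Sigma_x}(v,w)=\stackrel{\circ}{n}_y(\phi v,\phi w)=\stackrel{\circ}{\gamma}\bigl(\epsilon_y^{-1}\phi v,\epsilon_y^{-1}\phi w\bigr)=\stackrel{\circ}{\gamma}\bigl(j_{f,x}^{-1}v,\,j_{f,x}^{-1}w\bigr),
\]
i.e. $q_x|_{\Sigma_x}$ is the \emph{push-forward} of $\stackrel{\circ}{\gamma}$ along $j_{f,x}$, whereas by definition $\gamma=j_{f,x}^{*}\bigl(g_x|_{\Sigma_x}\bigr)$ is the \emph{pull-back} of $g_x|_{\Sigma_x}$ along $j_{f,x}$. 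Thus the single isomorphism $j_{f,x}$ carries $\stackrel{\circ}{\gamma}$ to $q_x|_{\Sigma_x}$ and carries $g_x|_{\Sigma_x}$ back to $\gamma$; the two pairs of bilinear forms have their roles interchanged, which is the source of the reciprocity.

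To make this explicit I would take the $\stackrel{\circ}{\gamma}$-orthonormal basis $(E_A)$ to moreover diagonalize $\gamma$, say $\gamma(E_A,E_B)=\lambda_A\delta_{AB}$, and put $X_A:=j_{f,x}E_A$, a basis of $\Sigma_x$. Then $g_x|_{\Sigma_x}(X_A,X_B)=\gamma(E_A,E_B)=\lambda_A\delta_{AB}$ while $q_x|_{\Sigma_x}(X_A,X_B)=\stackrel{\circ}{\gamma}(E_A,E_B)=\delta_{AB}$, so relative to $(X_A)$ the matrix of $g_x|_{\Sigma_x}$ is $\mathrm{diag}(\lambda_1,\dots,\lambda_n)$ and that of $q_x|_{\Sigma_x}$ is the identity. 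Hence the eigenvalues of $q_x|_{\Sigma_x}$ relative to $g_x|_{\Sigma_x}$ are $\Lambda_A=\lambda_A^{-1}$, which is the first claim; multiplying gives $\sqrt{\Lambda_1\cdots\Lambda_n}=(\lambda_1\cdots\lambda_n)^{-1/2}$. Finally, evaluating $\omega_\gamma$ on the $\gamma$-orthonormal basis $(\lambda_A^{-1/2}E_A)$ and using $\omega_\gamma=V(\gamma)\,\omega$ with $\omega(E_1,\dots,E_n)=1$ gives $V(\gamma)=\sqrt{\lambda_1\cdots\lambda_n}=v$, so that $\sqrt{\Lambda_1\cdots\Lambda_n}=1/v=\mu(x)=N$, the rest mass density.

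I do not anticipate a genuine obstacle: the content is bookkeeping, and the only point requiring care is the direction of the maps — that $q_x|_{\Sigma_x}$ is the push-forward rather than the pull-back of $\stackrel{\circ}{\gamma}$ under $j_{f,x}$ — since this is exactly what inverts the eigenvalues. The closing remark of the proposition then follows at once: in the isotropic case the energy $e(\gamma)=e(\lambda_1,\dots,\lambda_n)$ depends on $\gamma$ only through its eigenvalues relative to $\stackrel{\circ}{\gamma}$, and these are now expressed purely through the eigenvalues of $q=f^{*}\stackrel{\circ}{n}$, so the crystalline structure $\mathcal V$ is eliminated from the variational principle.
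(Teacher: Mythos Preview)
Your proof is correct. The paper, however, states this proposition without proof; the only closely related argument appears much later, in Part~III (Proposition~\ref{eigenvalues}), where the \emph{static} analogue is proved: the eigenvalues of $\gamma$ relative to $\stackrel{\circ}{\gamma}$ coincide with those of $m=\phi^*g$ relative to $\stackrel{\circ}{n}$. That proof proceeds by defining the linear maps $A(y)\in\mathcal L(\mathcal V,\mathcal V)$ and $B(y)\in\mathcal L(T_y\mathcal N,T_y\mathcal N)$ representing each pair of quadratic forms and showing $A(y)=\epsilon_y^{-1}\circ B(y)\circ\epsilon_y$, hence conjugacy and equality of eigenvalues.

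Your argument is in the same spirit but adapted to the dynamical setting, where the pullback goes the other way ($q=f^*\stackrel{\circ}{n}$ lives on $\mathcal M$, not on $\mathcal N$). You correctly identify the crucial asymmetry: $j_{f,x}$ pushes $\stackrel{\circ}{\gamma}$ forward to $q_x|_{\Sigma_x}$ but pulls $g_x|_{\Sigma_x}$ back to $\gamma$, so the roles of the two forms are swapped and the eigenvalues invert rather than coincide. Your diagonalizing-basis computation makes this transparent, and the identification $V(\gamma)=\sqrt{\lambda_1\cdots\lambda_n}$ (via $\omega(E_1,\dots,E_n)=1$ with $(E_A)$ $\stackrel{\circ}{\gamma}$-orthonormal) is exactly the convention fixed earlier in the paper. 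No gaps.
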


\section{Equivalences}

\subsection{Equivalence of Crystalline Structures}
{\definition Two crystalline structures $\mathcal V$ and $\mathcal V'$ on $\mathcal N$ are said to be equivalent if there is a diffeomorphism $\psi$ of $\mathcal N$ onto itself such that $\psi_*$, the push-forward of $\psi$, induces an isomorphism of $\mathcal V$ onto $\mathcal V'$.}

Let $\nu$ be the canonical $1$-form (\ref{defnu}) associated to $\mathcal V$ and $\nu'$ the one associated to $\mathcal V'$. We have
\begin{equation} \label{equivcs}
\begin{array}{rcl}
\nu(Y_y) & = & Y \in \mathcal V \quad : \, Y_y \in T_y\mathcal N \, ,\\
\nu'(d\psi \cdot Y_y) & = & \psi_* Y \in \mathcal V' \quad : \, d\psi \cdot Y_y \in T_{\psi(y)}\mathcal N \, .
\end{array}
\end{equation}
We may take the above as the definition of pullback for $\mathcal V$-valued $1$-forms:
\begin{equation*}
\nu = \psi^* \nu' \, .
\end{equation*}
It then follows that:
\begin{equation} \label{equivlambda}
\lambda=\psi^* \lambda' \, ,
\end{equation} 
with the natural extension of the above definition of pullback to $\mathcal V$-valued $2$-forms, i.e.~
\begin{equation*}
\lambda'\left(d\psi \cdot X_y, d\psi \cdot Y_y\right) = \psi_*(\lambda(X_y,Y_y)) \quad : \, \forall X,Y \in \mathcal V, \forall y \in \mathcal N \, .
\end{equation*} 
In fact, we have for all $X,Y \in \mathcal V$ and $y \in \mathcal N$:
\begin{eqnarray*} 
\Lambda'(\psi_*X,\psi_*Y)\left(\psi(y)\right) & = & \nu'\left([\psi_* X,\psi_* Y](\psi(y)) \right) \\
& = & \nu'\left(\psi_* [X,Y](\psi(y))\right) \\
& = & \nu' \left( d\psi \cdot [X,Y](y)\right) \\
& = & \psi_*\left(\nu([X,Y](y))\right) \\
& = & \psi_*\left(\Lambda(X,Y)(y)\right) \, ,
\end{eqnarray*}
where we have made use of the fact that $[\psi_* X, \psi_* Y]=\psi_* [X,Y]$.

\subsection{Equivalence of Mechanical Properties}
We investigate the question of the equivalence of the mechanical properties of a solid. A certain solid phase of a certain substance is described by an equation of state.

Let $A:\mathcal V \to \mathcal V'$ be a linear isomorphism. The group of all such $A$ is homomorphic to $GL_n(\mathbb R)$. Then $A^*: S_2^+(\mathcal V') \to S_2^+(\mathcal V)$ is the induced isomorphism defined by $\gamma = A^* \gamma'$, where
\begin{equation*}
\gamma(X,Y)=\gamma'(A X, A Y) \quad , \, \forall X,Y \in \mathcal V  \, .
\end{equation*} 
The corresponding energy functions on $S_2^+(\mathcal V) $ and $S_2^+(\mathcal V')$ are denoted by $e$ and $e'$, respectively.
{\definition  \label{equivnrg} Two energy functions $e$ on $\mathcal V$ and $e'$ on $\mathcal V$ are equivalent if there exists a linear isomorphism $A: \mathcal V \to \mathcal V'$ such that $e'(\gamma')=e(\gamma)$, where $ \gamma = A^*\gamma'$, for all $\gamma' \in S_2^+(\mathcal V')$.}

We illustrate the above definition of mechanical equivalence by giving an example in the isotropic case. Let $\gamma \in S_2^+(\mathcal V)$ be given, and define $M\in \mathcal L(\mathcal V, \mathcal V)$ by
\begin{equation*}
\stackrel{\circ}{\gamma}(M X, Y) = \gamma(X,Y) \quad , \, \forall X,Y \in \mathcal V  \, .
\end{equation*} 
Similarly, given an isomorphism $A$ as above (i.e.~$\gamma=A^*\gamma'$), we define $M' \in \mathcal L(\mathcal V', \mathcal V')$ from the corresponding $\gamma' \in S_2^+(\mathcal V')$ as
\begin{equation*}
\stackrel{\circ}{\gamma'}(M' X', Y') = \gamma'(X',Y') \quad , \, \forall X',Y' \in \mathcal V' \, .
\end{equation*}
In the above, $\stackrel{\circ}{\gamma}=A^*\stackrel{\circ}{\gamma'}$.
Then $M'=AMA^{-1}$. 
For, given any $X', Y' \in \mathcal V'$ let $X, Y \in \mathcal V$ be $X=A^{-1}X'$, $Y=A^{-1}Y'$. Then
\begin{eqnarray*}
\stackrel{\circ}{\gamma'}(M' X', Y') & = & \gamma'(X',Y')=\gamma(X,Y)=\stackrel{\circ}{\gamma}(M X, Y) \\ 
& = & \stackrel{\circ}{\gamma'}(AM X, A Y)=\stackrel{\circ}{\gamma'}(AMA^{-1} X', Y') \, .
\end{eqnarray*}

Therefore, the eigenvalues $\lambda_1',\ldots,\lambda_n'$ of $M'$ coincide with the eigenvalues $\lambda_1,\ldots,\lambda_n$ of $M$. In the isotropic case, $e'(\gamma')=e(\gamma)$ means 
\begin{equation*} 
e'(\lambda_1',\ldots,\lambda_n')=e(\lambda_1,\ldots,\lambda_n) \, ,
\end{equation*}
where both sides of the above equation are symmetric functions, so the energy functions coincide, i.e.~$e'=e$.

The equivalence of the energy functions, however, does not fully capture the equivalence of having two solids of the same substance in the same phase. What is required in addition is to have the same equilibrium mass density of infinitesimal portions.
In fact, it is the triplet $(\mathcal V, \omega, e)$ which defines a solid with its mechanical properties. Two solids of the same substance in the same phase, but with possibly different dislocation structures, to the extent that they can be described by the same manifold $\mathcal N$ (which is true if they are diffeomorphic) are defined by the triplets $(\mathcal V, \omega, e)$ and  $(\mathcal V', \omega', e')$, respectively. Additionally, there is an isomorphism $A: \mathcal V \to \mathcal V'$, such that
\begin{equation*}
\omega = A^* \omega' \, ,
\end{equation*}
i.e.~$\omega(X_1, \ldots , X_n)=\omega'(AX_1, \ldots, AX_n) \, \, : \, \forall X_1, \ldots, X_n \in \mathcal V$ as well as 
\begin{equation}
e'(\gamma')=e(\gamma) \quad , \quad \gamma = A^* \gamma' \quad : \, \forall \gamma' \in S_2^+(\mathcal V') \, 
\end{equation}
in accordance with Definition \ref{equivnrg}. Thus if $(E_1, \ldots, E_n)$ is a positive basis for $\mathcal V$ which is orthonormal relative to $\stackrel{\circ}{\gamma}$, then
\begin{equation*}
\omega_{\stackrel{\circ}{\gamma}}(E_1,\ldots,E_n)=1 \, ,
\end{equation*}
while $\omega(E_1,\ldots,E_n)=\mu_0$. On the other hand, with $E_i'=A E_i \, : i=1, \ldots , n$, $(E_1', \ldots , E_n')$ is orthonormal relative to $\stackrel{\circ}{\gamma'}$, thus 
\begin{equation*}
\omega_{\stackrel{\circ}{\gamma'}}(E_1',\ldots,E_n')=1 \, ,
\end{equation*}
while $\omega'(E_1',\ldots,E_n')=\omega(E_1,\ldots,E_n)=\mu_0$. Consequently, the mass density corresponding to $\stackrel{\circ}{\gamma'}$ relative to the triplet $(\mathcal V', \omega', e')$ is the same as the mass density corresponding to $\stackrel{\circ}{\gamma}$ relative to the triplet $(\mathcal V, \omega, e)$.

\begin{remark}
If we have two triplets $(\mathcal V, \omega, e)$ and $(\mathcal V', \omega', e')$ which correspond to the same substance in the same phase and, additionally, the isomorphism $A: \mathcal V \to \mathcal V'$ is of the form $A=\psi_*$, where $\psi$ is a diffeomorphism of $\mathcal N$ onto itself, then the dislocation structures are also equivalent. In this case, if $f: \mathcal M \to \mathcal N$ is a dynamical solution of the problem corresponding to $(\mathcal V, \omega, e)$, then $f':=\psi \circ f : \mathcal M \to \mathcal N$ is a dynamical solution of the problem corresponding to $(\mathcal V', \omega', e')$.
\end{remark}

\subsection{Similarity} \label{similarity}
Let now $\mathcal V$ be fixed, so that $S_2^+(\mathcal V)$ is also fixed. Two different energy functions $e$ and $e'$ may be related as follows. There is a constant $a>0$ and an isomorphism $\mathcal V \to \mathcal V$, defined by
\begin{equation*}
X \mapsto a X \quad : \, \forall X \in \mathcal V \, .
\end{equation*}
This defines an isomorphism $S_2^+(\mathcal V) \to S_2^+(\mathcal V)$ by $\gamma \mapsto \gamma'$, where
\begin{equation*}
\gamma'(X,Y)=\gamma(aX,aY)=a^2 \gamma(X,Y) \quad : \, \forall X,Y \in \mathcal V \, ,
\end{equation*}
i.e.~$\gamma \mapsto \gamma' = a^2 \gamma$.

According to the above discussion of equivalence of mechanical properties, we must have $e(\gamma)=e'(\gamma')$, $\forall \gamma \in S_2^+(\mathcal V)$, \emph{and}, additionally $\omega'=a^n\omega$ for the same material in the same phase, for
\begin{equation*}
\omega'(X_1,\ldots,X_n)=\omega(aX_1,\ldots,X_n)=a^n\omega(X_1,\ldots,X_n) \, .
\end{equation*}
Then the two theories (primed and unprimed) represent the same material in the same phase.

Moreover, since $\mathcal V$ is identical in the two theories, the dislocation structures may seem identical. However, for a given domain $\Omega \subset \subset \mathcal N$, which has the same crystalline structure $\left.\mathcal V\right|_{\Omega}$ (the restriction to $\Omega$ of the vectorfields in $\mathcal V$), the primed theory actually assigns a \emph{physical dislocation density} $a^{-2}$ times that of the unprimed theory. This is due to the fact that $\omega'=a^n\omega$ and the observation that in any dimension, the elementary dislocations are co-dimension two objects. Thus their density refers to the two-dimensional measure of a cross-section.


\section{The Eulerian Picture}

As a prelude to formulating the Eulerian picture, we view the crystalline structure $\mathcal V$ as an abstract $n$-dimensional real vector space, as in Remark \ref{mfo}. We then view the canonical form $\nu$ as a $1$-form on $\mathcal N$ with values in $\mathcal V$ such that $\left. \nu \right|_{T_y\mathcal N}$ is an isomorphism from $T_y \mathcal N$ onto $\mathcal V$ for all $y \in \mathcal N$. Thus, given any $v \in \mathcal V$, we have a tangent vector
\begin{equation*}
\left(\left. \nu \right|_{T_y\mathcal N}\right)^{-1} \cdot v = Y_y(v) \in T_y\mathcal N \quad : \, \forall y \in \mathcal N \, ,
\end{equation*}
that is a vectorfield $Y(v)$ on $\mathcal N$. Then the crystalline structure in the original sense is the space
\begin{equation*}
\{ Y(v) : v \in \mathcal V \} 
\end{equation*}
of vectorfields on $\mathcal N$.

To obtain the Eulerian description we must eliminate the material manifold $\mathcal N$. A fundamental variable is the material velocity $u$, a future-directed timelike unit vectorfield on $\mathcal M$. This defines the distribution of local simultaneous spaces
\begin{equation} \label{lss}
\Sigma=\{\Sigma_x : x \in \mathcal M \} \, ,
\end{equation}
where $\Sigma_x$ is the orthogonal complement of $u_x$ in $T_x \mathcal M$.

We then need another entity defined on $\mathcal M$ to play the role of the canonical form $\nu$. Consider
\begin{equation*}
\xi = f^* \nu \, .
\end{equation*}
This is a $1$-form on $\mathcal M$ with values in $\mathcal V$, viewed as an abstract $n$-dimensional vector space.

The $1$-form $\xi$ has the following properties:
\begin{itemize}
\item[{\bf 1)}]
$\xi \cdot u = 0$,
\item[{\bf 2)}]
for each $x \in \mathcal M$, $\left. \xi \right|_{\Sigma_x}$ is an isomorphism from $\Sigma_x$ onto $\mathcal V$,  {\it and} 
\item[{\bf 3)}]
$\mathcal L_u \xi =0$.
\end{itemize}
We thus introduce ab initio a $1$-form $\xi$ on $\mathcal M$ with values in $\mathcal V$ possessing the above three properties, as another fundamental Eulerian variable besides $u$.

By {\bf 2)}, given any $v \in \mathcal V$, we have a tangent vector
\begin{equation*}
\left(\left.\xi\right|_{\Sigma_x}\right)^{-1} \cdot v = X_x(v) \in \Sigma_x 
\end{equation*}
at each $x \in \mathcal M$, that is we obtain a vectorfield $X(v)$ whose value at each point belongs to the distribution (\ref{lss}), i.e.~it is orthogonal to $u$.

A mapping $\mathcal M \to S_2^+(\mathcal V)$, $x \mapsto \gamma_x$, is then defined by:
\begin{equation} \label{defgammax}
\gamma_x(v_1,v_2)=g_x(X_x(v_1),X_x(v_2))=\left.g\right|_{\Sigma_x}(X_x(v_1),X_x(v_2)) \quad : \, \forall v_1, v_2 \in \mathcal V \, .
\end{equation}
The last fundamental variable is the entropy $s$, a positive function on $\mathcal M$.

The volume form $\omega$, the space of thermodynamic configurations $S_2^+(\mathcal V)$, and the volume per unit mass $V$ are defined as in Section \ref{tdss}. The thermodynamic state space is $S_2^+(\mathcal V)\times \mathbb R^+$ and the energy per unit mass $e$ is a function on this space, as before. The thermodynamic stress $\pi$ at each $(\gamma, s) \in S_2^+(\mathcal V)\times \mathbb R^+$, $\pi(\gamma, s) \in \left(S_2(\mathcal V)\right)^*$, is defined by
\begin{equation*}
\frac{\partial e(\gamma, s)}{\partial\gamma}=-\frac 1 2 V(\gamma) \pi(\gamma,s)\, ,
\end{equation*}
as in Section \ref{tdss}. Also, the temperature $\theta$ is defined, as before, by
\begin{equation*}
\theta(\gamma,s)=\frac{\partial e(\gamma, s)}{\partial s} \, .
\end{equation*}
Thus 
\begin{equation*}
de = -\frac 1 2 V \pi \cdot d\gamma + \theta ds
\end{equation*}
expresses the first law of thermodynamics in the present framework.

The stress $S$ is a $2$-contravariant symmetric tensorfield on $\mathcal M$, an assignment of an element $S_x \in \left(S_2(\Sigma_x)\right)^*$ at each $x \in \mathcal M$. This is defined as follows. Given any $\dot g_x \in S_2(T_x \mathcal M)$, we define $\dot \gamma_x \in S_2^+(\mathcal V)$ as in (\ref{defgammax}) by:
\begin{equation*}
\dot \gamma_x(v_1,v_2)=\dot g_x(X_x(v_1),X_x(v_2))=\left.\dot g\right|_{\Sigma_x}(X_x(v_1),X_x(v_2)) \quad : \, \forall v_1, v_2 \in \mathcal V \, .
\end{equation*}
Then
\begin{equation*}
S_x(\dot g_x)= \pi(\gamma_x,s(x))(\dot \gamma_x) \, .
\end{equation*}
The mass-energy density $\rho$ is then the positive function on $\mathcal M$ given by
\begin{equation*}
\rho(x)=\frac{e(\gamma_x, s(x))}{V(\gamma_x)} \, ,
\end{equation*}
and the energy-momentum-stress tensor is defined according to (\ref{enmomstrtens}).

The Eulerian equations of motion are a first order system of partial differential equations consisting of 
\begin{itemize}
\item[{\bf (a)}] $\mathcal L_u \xi =0$, i.e.~property {\bf 3)} of $\xi$, {\it and}
\item[{\bf (b)}] $\nabla \cdot T=0$. 
\end{itemize}

\begin{remark}
In $n$ space dimensions, $\mathrm{dim} \mathcal M=n+1$, there are $n^2+n+1$ dependent variables, the $n^2$ algebraically independent components of $\xi$, the $n$ algebraically independent components of $u$, and also $s$. The Eulerian equations are also $n^2+n+1$ in number, $n^2$ independent equations in {\bf (a)} and $n+1$ independent equations in {\bf (b)}.
\end{remark}

For solutions of the Eulerian equations such that $u$, $\xi$, and $s$ are continuous, {\bf (b)} implies the adiabatic condition on $s$:
\begin{equation*}
u(s)=0 \, .
\end{equation*}
For the proof of this fact see \cite{C2}.

\begin{remark}
The case of absence of dislocations is the case $d \xi =0$.
\end{remark}

Let $(E_A: A=1,\ldots,n)$ be a basis for $\mathcal V$. Given any $\gamma \in S_2^+(\mathcal V)$, we set
\begin{equation*}
\gamma_{AB}=\gamma(E_A,E_B) \, .
\end{equation*}
According to the above, at each $x \in \mathcal M$ there is a unique $X_A \in \Sigma_x$ such that 
\begin{equation*}
\xi \cdot X_A = E_A \quad : \, A=1, \ldots, n \, .
\end{equation*}
Thus $X_A$ is a vectorfield on $\mathcal M$ belonging to $\Sigma$.
Then
\begin{equation*}
S = \pi^{AB} X_A \otimes X_B \, , 
\end{equation*}
where $\pi^{AB} \in S_2^+(\mathcal V)^*$ is the thermodynamic stress defined above. 

Fix an element $v \in \mathcal V$ and consider the vectorfield $X\subset \Sigma$ such that
\begin{equation*}
\xi \cdot X = v \, .
\end{equation*}
Then
\begin{equation*}
0=u(\xi \cdot X)=\left(\mathcal L_u \xi \right) \cdot X + \xi \cdot \left[ u , X \right] \, ,
\end{equation*}
i.e.~(according to {\bf 3)}) $ \xi \cdot \left[ u , X \right]=0$. It follows that 
\begin{equation}\label{projpi}
\Pi\left[u,X\right] = 0 \, ,
\end{equation}
where $\Pi$ is the $g$-orthogonal projection to $\Sigma$.

\begin{proposition}
Define $\mu:=\frac 1 V$. Then the mass current $I=\mu u$ satisfies the equation of continuity:
\begin{equation} \label{continuity}
\nabla \cdot I = 0 \, .
\end{equation}
\end{proposition}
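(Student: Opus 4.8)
\emph{Proof plan.} The idea is to rewrite the continuity equation (\ref{continuity}) in the language of differential forms and to reduce it to the Eulerian equation of motion {\bf (a)}, i.e.~$\mathcal L_u\xi=0$. Let $\iota_Y$ denote interior multiplication by a vector field $Y$. Since $d\mu_g$ is of top degree on $\mathcal M$ one has $\iota_I(d\, d\mu_g)=0$, so Cartan's formula gives $\mathcal L_I d\mu_g=d(\iota_I d\mu_g)$, while on the other hand $\mathcal L_I d\mu_g=(\nabla\cdot I)\,d\mu_g$. Hence it suffices to write the $n$-form $\iota_I d\mu_g$ explicitly and to show that it is closed.

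\emph{First step: identifying $\iota_I d\mu_g$.} I would choose a positive basis $(E_A)$ of $\mathcal V$ with $\omega(E_1,\ldots,E_n)=1$ and write $\xi=\sum_A\xi^A E_A$ with scalar $1$-forms $\xi^A$ on $\mathcal M$. Setting $X_A=\left(\left.\xi\right|_{\Sigma_x}\right)^{-1}(E_A)\in\Sigma_x$, property {\bf 2)} of $\xi$ makes $(u,X_1,\ldots,X_n)$ a frame of $T_x\mathcal M$. By property {\bf 1)}, $\iota_u\xi^A=0$, so $\iota_u(\xi^1\wedge\cdots\wedge\xi^n)=0$, and clearly $(\xi^1\wedge\cdots\wedge\xi^n)(X_1,\ldots,X_n)=\det(\delta^A_B)=1$. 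On the other hand, since $g(u,u)=-1$, $g(u,X_A)=0$, and $g(X_A,X_B)=\gamma_{AB}:=\gamma_x(E_A,E_B)$ by (\ref{defgammax}), the Gram matrix of $(u,X_1,\ldots,X_n)$ is block diagonal with determinant $-\det\gamma_{AB}$, whence $d\mu_g(u,X_1,\ldots,X_n)=\sqrt{\det\gamma_{AB}}$ for the chosen orientations; and the normalization $\omega(E_1,\ldots,E_n)=1$ together with the definition $\omega_\gamma=V(\gamma)\,\omega$ of Section \ref{tdss} identifies this with $V(\gamma_x)$. Thus $\iota_u d\mu_g$ and $V\,\xi^1\wedge\cdots\wedge\xi^n$ are $n$-forms on $\mathcal M$ that are both annihilated by $\iota_u$ and take the same value on $(X_1,\ldots,X_n)$; since an $n$-form annihilated by $\iota_u$ is determined by its value on that $n$-tuple, they coincide, so that
\[ \iota_I d\mu_g=\mu\,\iota_u d\mu_g=\xi^1\wedge\cdots\wedge\xi^n=:\Theta . \]

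\emph{Second step: $\Theta$ is closed.} Because the $E_A$ are fixed elements of the abstract vector space $\mathcal V$, the equation of motion $\mathcal L_u\xi=0$ forces $\mathcal L_u\xi^A=0$ for every $A$, hence $\mathcal L_u\Theta=0$ by the Leibniz rule for wedge products. Combining with $\iota_u\Theta=0$ and Cartan's formula gives $\iota_u(d\Theta)=\mathcal L_u\Theta-d(\iota_u\Theta)=0$. But $d\Theta$ is an $(n+1)$-form on the $(n+1)$-dimensional manifold $\mathcal M$, and a top-degree form annihilated by interior multiplication with the nowhere-vanishing field $u$ must vanish (evaluate it on the frame $(u,X_1,\ldots,X_n)$). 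Therefore $d\Theta=0$, and consequently $(\nabla\cdot I)\,d\mu_g=\mathcal L_I d\mu_g=d(\iota_I d\mu_g)=d\Theta=0$, i.e.~$\nabla\cdot I=0$.

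\emph{On the main difficulty.} The only step needing genuine care is the first one: matching the thermodynamic volume per unit mass $V(\gamma_x)$ with the Riemannian volume of the frame $(u,X_1,\ldots,X_n)$, and tracking the orientation so that the sign comes out right (it is forced by the orientation conventions on $\mathcal M$ and on $\mathcal V$). The second step is formal. It is instructive to compare with the Lagrangian picture, where $\xi=f^*\nu$ and hence $\Theta=\xi^1\wedge\cdots\wedge\xi^n=f^*(d\mu_\omega)$ is closed for the trivial reason that $d\mu_\omega$ is already of top degree on the $n$-dimensional $\mathcal N$; the second step above is exactly the intrinsically Eulerian substitute for this observation, with the flow-invariance $\mathcal L_u\xi=0$ taking over the role of "being pulled back from $\mathcal N$".
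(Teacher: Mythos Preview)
Your proof is correct and takes a genuinely different, more intrinsic route than the paper. The paper proceeds by a direct component computation: differentiating $V=\sqrt{\det\gamma}$ along $u$, expanding $u(\gamma_{AB})=u(g(X_A,X_B))$ via the Levi--Civita connection, invoking the consequence $\Pi[u,X_A]=0$ of property {\bf 3)} (equation (\ref{projpi})) to kill the commutator terms, and identifying the remainder with $\mathrm{tr}\,\kappa=\nabla\cdot u$, so that $u(\mu)+\mu\nabla\cdot u=0$. Your argument bypasses all of this by recognizing the single identity $\iota_I d\mu_g=\xi^1\wedge\cdots\wedge\xi^n$ and then using $\mathcal L_u\xi=0$ directly, together with Cartan's formula, to conclude closedness. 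What your approach buys is a coordinate-free proof that makes the role of the Eulerian equation {\bf (a)} completely transparent and connects cleanly to the Lagrangian picture via $\Theta=f^*(d\mu_\omega)$; what the paper's approach buys is an explicit link between the kinematics (the tensor $\kappa_A^B$ encoding $\nabla_{X_A}u$) and the rate of change of specific volume, which is physically informative even though it is not needed for the bare statement.
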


\begin{proof}
$\mathcal V$ is endowed with a volume form $\omega$ and $\omega_{\gamma}=V \omega$.
Choose a basis $(E_1,\ldots,E_n)$ for $\mathcal V$ such that $\omega(E_1,\ldots,E_n)=1$.
Then $V=\sqrt{\det \gamma}$, and, by (\ref{defgammax}),
\begin{equation}\label{gammag}
\gamma_{AB}=\gamma(E_A,E_B)=g(X_A,X_B) \, ,
\end{equation}
hence
\begin{equation} \label{uv}
u (V) = \frac 1 2 V \left(\gamma^{-1}\right)^{AB} u (\gamma_{AB}) \, ,
\end{equation}
and, using (\ref{gammag}),
\begin{eqnarray*}
u(\gamma_{AB}) & = &  u(g(X_A,X_B)) \\
 & = & g(\nabla_u X_A, X_B) + g(X_A, \nabla_u X_B) \\
 & = & g(\nabla_{X_A}u, X_B) + g(X_A, \nabla_{X_B}u) +g([u,X_A],X_B)+g(X_A, [u,X_B]) \, .
\end{eqnarray*}
By (\ref{projpi}), the last two terms in the above equation vanish.
Let us define $\kappa$ by 
\begin{eqnarray}\label{defkappa}
\nabla_{X_A}u=\kappa_A^B X_B \, .
\end{eqnarray}
[$(X_A : A=1, \ldots, n)$ is a basis for $\Sigma_x$ at each point.] 
Then 
\begin{equation*}
g(\nabla_{X_A}u,X_B)= \kappa_A^C g(E_C,E_B)=\gamma_{CB} \kappa_A^C \, .
\end{equation*}
Hence we obtain
\begin{equation*}
u(\gamma_{AB})=\gamma_{CB}\kappa_A^C+\gamma_{AC}\kappa_B^C \, .
\end{equation*}
Substituting in (\ref{uv}) yields
\begin{equation*}
u(V)=\frac 1 2 V \left(\gamma^{-1}\right)^{AB}u(\gamma_{AB})= V \textrm{tr} \, \kappa  \, .
\end{equation*}
Then $\mu=V^{-1}$ satisfies 
\begin{equation*}
u(\mu)+\mu\textrm{tr} \, \kappa = 0 \, ,
\end{equation*}
and since, from (\ref{defkappa}), $\textrm{tr} \,\kappa = \nabla \cdot u$, we obtain
\begin{equation}
\nabla \cdot I = \nabla (\mu u) = u(\mu) + \mu \nabla \cdot u= 0 \, ,
\end{equation}
which establishes (\ref{continuity}). 
\end{proof}

\subsection{The Non-relativistic Limit}
First of all, we restrict ourselves to the case where $(\mathcal M, g)$ is the Minkowski space-time.
Then we consider the non-relativistic limit, where Minkowski space-time is replaced by Galilean space-time. There, we have the hyperplanes $\Sigma_t$ of absolute simultaneity, which are isometric to $n$-dimensional Euclidean space.
Any family of parallel lines transversal to the $\Sigma_t$ represents a Galilean frame, that is, a family of observers in uniform motion and at rest relative to each other. Any such family of parallel lines defines an isometry of the $\Sigma_t$ onto each other.

In terms of a Galilean frame and a rectangular coordinate system $(x^1, \ldots, x^n)$ in Euclidean space, and with $x^0=ct$ ($c$: the speed of light in vacuum), the space-time velocity $u = u^{\mu}\frac{\partial}{\partial x^{\mu}}$ is represented in terms of the space velocity $v=v^i\frac{\partial}{\partial x^i}$ by
\begin{equation*}
u^0=\frac 1{\sqrt{1-|v|^2/c^2}} \quad , \quad u^i = \frac{\frac{v^i}{c}}{\sqrt{1-|v|^2/c^2}} \, .
\end{equation*}
Therefore, in the non-relativistic limit $c \to \infty$
\begin{equation*}
c u = \frac{\partial}{\partial t}+ v^i \frac{\partial}{\partial x^i} \, .
\end{equation*}
Also, the condition 
\begin{equation*}
\xi_{\mu} u^{\mu} = 0
\end{equation*}
is simply $\xi_0=0$ and thus $\xi$ becomes a $\mathcal V$-valued $1$-form on each $\Sigma_t$,
\begin{equation*}
\xi=\xi_i dx^i \, .
\end{equation*}
Equations {\bf (a)} become
\begin{equation}
\frac{\partial \xi}{\partial t}+ \mathcal L_v \xi =0 \quad \left( \textrm{or} \quad \frac{\partial \xi_i}{\partial t}+v^j \frac{\partial \xi_i}{\partial x^j}+ \xi_j \frac{\partial v^j}{\partial x^i}=0 \right) \, ,
\end{equation}
and
\begin{equation*}
S^{\mu \nu} u_{\nu} = 0 \quad , \quad u_{\nu}=g_{\mu\nu}u^{\mu} \, ,
\end{equation*}
reads (note that $u_{\nu}=g_{\kappa \nu}u^{\kappa}$, i.e.~$u_0=-u^0$, while $u_i=u^i$)
\begin{eqnarray} \label{sio}
S^{i0}-S^{ij} \frac{v^j}{c} & = & 0 \, , \\
S^{00}-S^{0i} \frac{v^i}{c} & = & 0  \, .
\end{eqnarray}
So, in the limit $c \to \infty$, we have $S^{i0}=S^{00}=0$ and, therefore,
\begin{equation*}
S=S^{ij} \frac{\partial}{\partial x^i}\otimes \frac{\partial}{\partial x^j} \, .
\end{equation*}
Let 
\begin{eqnarray*}
e & = & c^2 + e' \, , \\
\rho &  = & \mu e = \mu c^2 + \varepsilon \quad , \quad \varepsilon=\mu e' \, .
\end{eqnarray*}
We have for the mass current $I^{\nu}=\mu u^{\nu}$: 
\begin{eqnarray*}
I^0 & = & \frac{\mu}{\sqrt{1-|v|^2/c^2}} \, = \, \mu + \frac 1 2  \frac{\mu |v|^2}{c^2} + O(c^{-4}) \, , \\
I^i & = & \frac{\mu\frac{v^i}{c}}{\sqrt{1-|v|^2/c^2}} \, = \, \frac{\mu v^i}{c^2} + \frac 1 2  \frac{\mu|v|^2 v^i}{c^3} + O(c^{-5})  \, .
\end{eqnarray*}
In the non-relativistic limit the equation of continuity $\nabla_{\nu} I^{\nu}=0$ becomes the classical continuity equation:
\begin{equation}
\frac{\partial \mu}{\partial t}+ \frac{\partial \left( \mu v^i \right)}{\partial x^i} = 0 \, .
\end{equation}
Similarly, for the energy-momentum-stress tensor $T_{\kappa\lambda}$, we have
\begin{eqnarray*}
T^{00} & = & \rho (u^0)^2 + S^{00} \, = \, \mu c^2 + (\varepsilon + \mu v^2) + O(c^{-2}) \, , \\
T^{0i} & = & \rho u^0 u^i + S^{0i} \, = \, \mu v^i c + (\varepsilon + \mu v^2)\frac{v^i}{c^2} - S^{ij} \frac{v^j}{c} +  O(c^{-2}) \, ,\\
T^{ij} & = & \frac{\rho v^i v^j/c^2}{1-|v|^2/c^2}+ S^{ij} \, = \, \mu v^i v^j + S^{ij} +  O(c^{-2}) \, .
\end{eqnarray*}
The equations of motion $\nabla_{\lambda} T^{\kappa\lambda}=0$ reads
\begin{eqnarray*}
\kappa=0: \quad \frac{\partial T^{00}}{\partial x^0} + \frac{\partial T^{0i}}{\partial x^i}= 0 & \stackrel{c\to \infty}{\longrightarrow} & \frac{\partial \mu}{\partial t}+ \frac{\partial \left( \mu v^i \right)}{\partial x^i} = 0  \, , \\
\kappa=i: \quad \frac{\partial T^{i0}}{\partial x^0} + \frac{\partial T^{ij}}{\partial x^j}= 0 & \stackrel{c\to \infty}{\longrightarrow} & \frac{\partial \left(\mu v^i\right)}{\partial t}+ \frac{\partial \left( \mu v^i v^j + S^{ij} \right)}{\partial x^j} = 0 \, .
\end{eqnarray*}
In order to obtain the energy equation, let us consider
\begin{equation*}
F^{\nu}= T^{0\nu}-c^2 I^{\nu} \quad , \, \textrm{where} \quad \nabla_{\nu}F^{\nu}=0 \, .
\end{equation*}
For the components of $F^{\nu}$ we have, using (\ref{sio}), i.e.~$S^{i0}=S^{ij}v^j/c$ and $S^{00}=S^{ij}v^i v^j / c^2$,
\begin{eqnarray*}
F^0 & = & \frac{\mu |v|^2 + \varepsilon}{1-|v|^2/c^2}+ \frac{S^{ij}v^i v^j}{c^2} - \frac{c^2 \mu}{\sqrt{1-|v|^2/c^2}} \, = \, \frac 1 2 \mu |v|^2 + \varepsilon + O(c^{-2}) \\
F^i & = &  \frac{\left(\mu |v|^2 + \varepsilon\right)v^i/c}{1-|v|^2/c^2}+ S^{0i}  - \frac{c \mu v^i}{\sqrt{1-|v|^2/c^2}} \\
 & = & \left(\mu |v|^2 + \varepsilon \right) \frac{v^i}{c} + S^{ij} \frac{v^j}{c} - \frac 1 2 \mu |v|^2 \frac{v^i}{c} + O(c^{-2}) \, .
\end{eqnarray*}
Hence,
\begin{equation*}
c \nabla_{\nu} F^{\nu} = \frac{\partial F^0}{\partial t}+ c \frac{\partial F^i}{\partial x^i}=0
\end{equation*}
is the energy equation
\begin{equation}
 \frac{\partial}{\partial t}\left(\frac 1 2 \mu |v|^2 + \varepsilon \right) +  \frac{\partial}{\partial x^i}\left[ \left(\frac 1 2 \mu |v|^2 + \varepsilon \right) v^i + S^{ij} v^j \right] = 0 \, .
\end{equation}
The non-relativistic Eulerian equations are
\begin{eqnarray*}
{\bf (a)}  &  & \frac{\partial \xi}{\partial t}+ \mathcal L_v \xi = 0 \, , \\
{\bf (b0)} &  & \frac{\partial \mu}{\partial t}+\frac{\partial}{\partial x^i} \left(\mu v^i\right) = 0 \, , \\
{\bf (b1)} &  & \frac{\partial \left(\mu v^i\right)}{\partial t}+ \frac{\partial}{\partial x^j} \left( \mu v^i v^j + S^{ij} \right) = 0 \, , \\
{\bf (b2)} &  & \frac{\partial}{\partial t}\left(\frac 1 2 \mu |v|^2 + \varepsilon \right) +  \frac{\partial}{\partial x^i}\left[ \left(\frac 1 2 \mu |v|^2 + \varepsilon \right) v^i + S^{ij} v^j \right] = 0 \, .
\end{eqnarray*}
Notice:
\begin{itemize}
\item ${\bf (b0)}$ is the differential mass conservation law, a consequence of ${\bf (a)}$,
\item ${\bf (b1)}$ is the differential momentum conservation law, {\it and}
\item ${\bf (b2)}$ is the differential energy conservation law.
\end{itemize}

\begin{remark}
{\bf (a)} may be thought of as expressing a law of conservation of dislocations. In fact, let $C_0$ be a closed curve on $\Sigma_0$. Let $\phi_t$ be the flow in Galilean space-time generated by the vectorfield
\begin{equation*}
u= \frac{\partial}{\partial t}+ v^i \frac{\partial}{\partial x^i} \, .
\end{equation*}
Then $\left. \phi_t\right|_{\Sigma_0}$ is a diffeomorphism from $\Sigma_0$ onto $\Sigma_t$. Let $C_t=\phi_t(C_0)$. Then, according to {\bf (a)},
\begin{equation*}
\int\limits_{C_t} \xi = \int\limits_{C_0} \xi \, .
\end{equation*}
The left-hand side corresponds, in the continuum limit, to minus the sum of the Burger's vectors of all the dislocation lines enclosed by $C_t$.
\end{remark}

\begin{remark}
For continuous solutions, i.e.~($\xi$, $v$, $s$) continuous, {\bf (b2)} is equivalent to the adiabatic condition
\begin{itemize}
\item[{\bf (c)}] $\frac{\partial s}{\partial t}+v^i \frac{\partial s}{\partial x^i}=0$, 
\end{itemize}
modulo the other equations. When discontinuities such as shocks develop, this equivalence no longer holds. However, the Eulerian equations {\bf (a)}-{\bf (b)} still hold, but in a weak or integral sense.
\end{remark}

\subsection{From the Eulerian to the Lagrangian Picture}
In order to go from the Eulerian picture to the Lagrangian formulation we have to extract the canonical form $\nu$ from $\xi$.
First note that 
\begin{equation}\label{liuxi}
\mathcal L_u \xi =0 \quad \Leftrightarrow \quad \phi_t^* \xi = \xi \, ,
\end{equation}
where $\phi_t$ is the flow generated by $u$.


Let $\mathcal H$ be a Cauchy hypersurface in $\mathcal M$. We identify $\mathcal H$ with $\mathcal N$ and define $f: \mathcal M \to \mathcal N$ as follows:
\begin{equation*}
f(x)=y \in \mathcal H
\end{equation*}
is the point at which the integral curve of $u$ through intersects $\mathcal H$. 

For $X_x \in T_x\mathcal M$ we have from (\ref{liuxi})
\begin{equation*}
\left(\phi^*\xi\right) = \xi\left(d\phi_t \cdot X_x\right)=\xi(X_x) \, .
\end{equation*}
Define $\nu$ to be the $\mathcal V$-valued $1$-form induced by $\xi$ on $\mathcal H$:
\begin{equation*}
\nu(X_x)=\xi(X_x) \quad : \, \forall X_x \in T_x {\mathcal H} , x \in \mathcal H \, .
\end{equation*}
We must show
\begin{proposition}
\begin{equation*}
f^* \nu = \xi \, .
\end{equation*}
\end{proposition}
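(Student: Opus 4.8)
The plan is to unwind the definitions of $f$, $\nu$, and $\xi$ and show the two $\mathcal V$-valued $1$-forms agree when evaluated on an arbitrary tangent vector $X_x \in T_x\mathcal M$. Fix $x \in \mathcal M$ and let $y = f(x) \in \mathcal H$, i.e.\ $y$ is the point where the integral curve $t \mapsto \phi_t(x)$ of $u$ through $x$ meets $\mathcal H$; say $\phi_T(x) = y$ for the appropriate parameter value $T$. By definition of the pullback, $(f^*\nu)(X_x) = \nu(df(x) \cdot X_x)$, and since $\nu$ on $\mathcal H$ is simply the restriction of $\xi$, this equals $\xi(df(x)\cdot X_x)$. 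So the whole statement reduces to the pointwise identity
\begin{equation*}
\xi\bigl(df(x)\cdot X_x\bigr) = \xi(X_x) \quad : \, \forall X_x \in T_x\mathcal M .
\end{equation*}

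To prove this I would split $X_x$ using the frame adapted to $u$. First, the map $f$ is constant along integral curves of $u$ by construction, so $df(x)\cdot u_x = 0$; and by property {\bf 1)}, $\xi\cdot u = 0$ as well. Hence both sides annihilate the $u_x$-component of $X_x$, and it suffices to treat $X_x \in \Sigma_x$. For such $X_x$, note that $df(x)\cdot X_x = d(\phi_T)(x)\cdot X_x' $ up to the correction coming from how the intersection parameter $T$ varies with the base point — more precisely, $df(x) = d\phi_T(x) + (\text{term along } u_y)$, and the extra term is killed by $\xi$ since $\xi\cdot u = 0$. Thus $\xi(df(x)\cdot X_x) = \xi\bigl(d\phi_T(x)\cdot X_x\bigr)$. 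Now invoke (\ref{liuxi}): $\mathcal L_u\xi = 0$ is equivalent to $\phi_t^*\xi = \xi$ for all $t$, which is exactly the displayed computation just before the proposition, $\bigl(\phi_t^*\xi\bigr)(X_x) = \xi\bigl(d\phi_t(x)\cdot X_x\bigr) = \xi(X_x)$. Applying this with $t = T$ gives $\xi\bigl(d\phi_T(x)\cdot X_x\bigr) = \xi(X_x)$, completing the argument.

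The main obstacle is the bookkeeping in the step $df(x) = d\phi_T(x) + (\text{something along } u)$: one must be careful that $T$ depends on $x$, so differentiating $x \mapsto \phi_{T(x)}(x) = f(x)$ produces, by the chain rule, a piece $d\phi_{T}(x)\cdot X_x$ plus $(dT\cdot X_x)\,u_y$, and it is precisely the presence of $\xi\cdot u = 0$ (property {\bf 1)}) that lets this second piece drop out. I would also want to verify en route that the $\nu$ so defined actually satisfies the isomorphism property required of a canonical form — i.e.\ $\nu|_{T_y\mathcal H}$ is an isomorphism onto $\mathcal V$ — which follows from property {\bf 2)} once $\mathcal H$ is identified with $\mathcal N$ via the flow, since $T_y\mathcal H$ is transverse to $u_y$ and hence identified with $\Sigma_y$ up to the same $u$-direction ambiguity. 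With these two points dispatched, the identity $f^*\nu = \xi$ is immediate from $\mathcal L_u\xi = 0$.
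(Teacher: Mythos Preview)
Your proof is correct and uses the same ingredients as the paper's: the flow-invariance $\phi_t^*\xi=\xi$, the annihilation $\xi\cdot u=0$, and the fact that $f$ is constant along integral curves of $u$. The only organizational difference is that the paper decomposes a tangent vector at $p\in\phi_t(\mathcal H)$ as $V_p=\lambda u_p+Y_p$ with $Y_p\in T_p\phi_t(\mathcal H)$ (tangent to the \emph{flowed} hypersurface, not to $\Sigma_p$), writes $Y_p=d\phi_t\cdot X_q$ for $X_q\in T_q\mathcal H$, and then uses $f\circ\phi_t=f$ to compute $(f^*\nu)(Y_p)$ and $\xi(Y_p)$ separately as $\nu(X_q)$; you instead differentiate $f(x)=\phi_{T(x)}(x)$ by the chain rule to produce the $u_y$-correction term explicitly and kill it with $\xi\cdot u=0$. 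Both routes are equally short; your chain-rule formulation has the minor advantage that the preliminary decomposition along $\Sigma_x$ is actually unnecessary (your argument already works for arbitrary $X_x$), whereas the paper's decomposition along $T_p\phi_t(\mathcal H)$ is genuinely used.
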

\begin{proof}
Let $V_p \in T_p \mathcal M$, $p\in \phi_t(\mathcal H)$ for some $t \in \mathbb R$. $V_p$ uniquely decomposes into
\begin{equation*}
V_p= \lambda u + Y_p \quad , \, \textrm{where} \quad Y_p \in T_p \phi_t(\mathcal H) \, .
\end{equation*}
Therefore, it suffices to show that $(f^*\nu)(u_p)=\xi(u_p)=0$, which is obvious, and
\begin{equation*}
\left(f^*\nu\right)(Y_p)=\xi(Y_p) \, .
\end{equation*}
Since $\left.\phi_t \right|_{\mathcal H}$ is a diffeomorphism from $\mathcal H$ onto $\phi_t(\mathcal H)$, $d\phi_t(q)$ is an isomorphism from $T_q \mathcal H$ onto $T_p\phi_t(\mathcal H)$, $\phi_t(q)=p$. Therefore, there exists a unique $X_q \in T_q \mathcal H$, such that
\begin{equation*}
d\phi_t \cdot X_q = Y_p \, .
\end{equation*}
Now,
\begin{equation*}
\xi(Y_p)=\xi(d\phi_t \cdot X_q)=\left(\phi_t^* \xi\right)(X_q)=\xi(X_q)=\nu (X_q) 
\end{equation*}
and
\begin{equation*}
\left(f^* \nu\right) = \left(f^*\nu\right)(d\phi_t \cdot X_q) = \left(\phi_t^* f^*\nu \right)(X_q) = \left(f^* \nu \right)(X_q) = \nu (df \cdot X_q) \, .
\end{equation*}
But $\left. f \right|_{\mathcal H}= id$, hence $df \cdot X_q = X_q$ because
\begin{equation*}
\left. df \right|_{T_q\mathcal H}= id \, .
\end{equation*}
\end{proof}

\begin{remark}
The Eulerian picture is the one more related to direct physical experience and also the one which may serve as a basis for extending the theory beyond the domain of elasticity theory, where the dislocations are no longer anchored in the solid. We shall see in the next part, however, that the Lagrangian picture provides the suitable framework for the study of static problems.
\end{remark}

\newpage


\part{The Static Case} \label{static}
\setcounter{section}{0}



\section{Formulation of the Static Problem} \label{staticsetting}

Let $\mathcal N=\Omega \subset \mathbb R^n$ be the material manifold and $\mathcal M=E^n$ Euclidean space ($n=2,3$ being the cases of interest). 
In the static case, we adopt the material picture, that is, we consider one to one mappings $\phi$ from the material manifold into space,
\begin{eqnarray}\label{defphi} 
\begin{array}{rcl}
\phi : \mathcal N & \to & \mathcal M \\
 y & \mapsto & \phi(y)=x  \, .
\end{array}
\end{eqnarray}
They correspond to mappings $ f : \mathcal M \times \mathbb R \to \mathcal N$ such that
\begin{equation*} 
 f(x,t)=\phi^{-1}(x) \quad : \, \forall t \, .
\end{equation*}
In the material picture, the interchange of the roles of the domain and target space transforms a free boundary problem into a fixed boundary problem.

We recall that the \emph{configuration} $\gamma$ is an element of the space of inner product on the crystalline structure $\mathcal V$, $\gamma \in S_2^+(\mathcal V)$. In terms of the mapping $\phi$ it is defined as the pullback of the metric $g$ on $\mathcal M$ by the isomorphism $i_{\phi , y}$
\begin{equation} \label{defthermconfig}
\gamma = i_{\phi, y}^* g \, ,
\end{equation}
where $i_{\phi, y}=d\phi(y)\circ \epsilon_y$ and $\epsilon_y$ is the evaluation map (\ref{evalmap}) from $\mathcal V$ to $T_y\mathcal N$. 
The energy per unit mass $e(\gamma)$ defines the {\it thermodynamic stress} $\pi$, an element in $\left(S_2(\mathcal V)\right)^*$,  by
\begin{equation} \label{defthermstress}
\frac{\partial e}{\partial \gamma}=-\frac 1 2 \pi V \, ,
\end{equation}
where $V\equiv V(\gamma)$ is the volume per unit mass, related to the mass density $\mu$ by
\begin{equation*}
V(\gamma)=\frac 1{\mu(x)} \quad , \quad x=\phi(y) \, .
\end{equation*}
In the following, we assume the entropy $\sigma$ to be constant, what is called isentropic.

Since we are in the relativistic framework, $e$ includes the contribution $c^2$ of the rest mass-energy, so
\begin{equation*}
e=c^2+e'
\end{equation*}
(in conventional units), where $e'$ is what we would call energy per unit mass in the non-relativistic framework. Note that the additive constant $c^2$ does \emph{not} affect the definition (\ref{defthermstress}) of the thermodynamic stress $\pi$, which may equally well be written in the form
\begin{equation*} 
\frac{\partial e'}{\partial \gamma}=-\frac 1 2 \pi V \, .
\end{equation*}
Also, in view of (\ref{energy}) below, we have
\begin{equation*}
E=Mc^2+E' \, ,
\end{equation*}
where 
\begin{equation*}
M= \int_{\Omega} d\mu_{\omega} \quad , \quad E'=\int_{\Omega} e'(\gamma) d\mu_{\omega} \, .
\end{equation*}
The additive constant $Mc^2$ does not affect variations of $E$, which are the same as those of $E'$. 
For this reason we shall not distinguish in the static theory $e'$, $E'$ from $e$, $E$, and we shall denote the former by the latter.

{\remark The static theory would be formally identical if formulated wholly within the non-relativistic framework. What distinguishes the two theories is that the non-relativistic theory is applicable only when the characteristic speeds $\eta_i$ (\ref{defetai}) are negligible in comparison to $c$, while the relativistic theory holds for any values of the $\eta_i$ less than $c$.}

\section{Boundary Value Problem and Legendre-Hadamard Condition} \label{ELeqs}
\subsection{Euler-Lagrange Equations}
We choose a basis $E_1,\ldots ,E_n$ of $\mathcal V$ such that $\omega (E_1,\ldots ,E_n)=1$, where $\omega$ is the volume form on $\mathcal V$. 
We denote by $\gamma_{AB}=\gamma(E_A, E_B)$ the matrix representing the configuration in this frame.
We then have 
\begin{equation*}
\omega_{\gamma} = V(\gamma)\omega = \sqrt{\det \gamma} \cdot \omega \, ,
\end{equation*}
and for the total energy of a domain $\Omega$ in the material manifold
\begin{equation}\label{energy}
E=\int\limits_{\Omega} e(\gamma) d\mu_{\omega} \, ,
\end{equation}
where $d \mu_{\omega}$ is the mass element on $\mathcal N$ induced by $\omega$:
\begin{equation} \label{detomega}
d \mu_{\omega}\left(\left.\frac{\partial}{\partial y_1}\right|_y, \ldots , \left.\frac{\partial}{\partial y_n}\right|_y\right) = \omega \left( \epsilon_y^{-1}\left(\left.\frac{\partial}{\partial y_1}\right|_y\right), \ldots , \epsilon_y^{-1}\left(\left.\frac{\partial}{\partial y_n}\right|_y\right)\right)\, .
\end{equation}
Let $(\omega^A : A=1,\ldots,n)$ be the basis for $\mathcal V^*$ which is dual to the basis $(E_A: A=1,\ldots,n)$ for $\mathcal V$. The $\omega^A$ are $1$-forms on $\mathcal N$ such that $\omega^A(E_B)=\delta^A_B$. 
Since $(E_A(y): A=1,\ldots,n)$ is a basis for $T_y\mathcal N$, given any $Y \in T_y \mathcal N$, there are coefficients $Y^A : A=1,\ldots,n$ such that $Y=Y^A E_A(y)$. Then:
\begin{equation*}
\omega^A(Y) = \omega^A \left(Y^B E_B(y)\right)=Y^B(\omega^A(E_B))(y)= Y^B \delta^A_B = Y^A \, .
\end{equation*}
Setting $Y=\left. \frac{\partial}{\partial y^a}\right|_y$ we obtain
\begin{equation*}
Y^A=\omega^A \left(\left. \frac{\partial}{\partial y^a}\right|_y\right) =\omega_a^A(y) \, ,
\end{equation*}
and we have $\epsilon_y^{-1}(Y)= Y^A E_A \in \mathcal V$.
Thus
\begin{equation}\label{omegacoeff}
\epsilon_y^{-1}\left(\left.\frac{\partial}{\partial y^a}\right|_y\right)=\omega_a^A(y)E_A \, .
\end{equation}
Let us denote by $\det \omega (y)$ the determinant of the $n$-dimensional matrix with entries $\omega^A_a(y)$.
We conclude from (\ref{detomega}), (\ref{omegacoeff}) that
\begin{equation} \label{dmuomega}
d \mu_{\omega} = \det \omega (y) d^ny \, .
\end{equation}
The first variation of the energy (\ref{energy}) is:
\begin{equation}\label{firstvarE}
\dot E = \left.\frac{\partial}{\partial \lambda}\right|_{\lambda =0} E(\gamma+\lambda \dot\gamma)=\left.\frac{\partial}{\partial \lambda}\right|_{\lambda =0}\int\limits_{\Omega} e(\gamma+\lambda \dot\gamma) d\mu_{\omega}=\int\limits_{\Omega}\frac{\partial e(\gamma)}{\partial \gamma} \cdot \dot \gamma \,  d\mu_{\omega} \, ,
\end{equation} 
where $\dot \gamma \in S_2^+(\mathcal V)$ is the variation of the configuration $\gamma$.
By definition of the thermodynamic stress (\ref{defthermstress}), we conclude, in view of (\ref{dmuomega}),
\begin{equation*}
\dot E = - \int\limits_{\Omega} \frac 1 2 \pi^{AB}\dot \gamma_{AB} \sqrt{\det \gamma}\det \omega (y) \ d^ny \, . 
\end{equation*}
Denoting by $m_{ab}$ the pullback by $\phi$ to $\mathcal N$ of the Euclidean metric $g$ on $\mathcal M$, we have
\begin{eqnarray}
m & = & \phi^*g  \label{mpbphi} \\
m_{ab} & = &\frac{\partial x^i}{\partial y^a}\frac{\partial x^j}{\partial y^b} g_{ij} \qquad [x^i=\phi^i(y)] \, , \label{mabpbm} \\
\gamma_{AB} & = & E_A^aE_B^b m_{ab} \, , \label{gammapbm} 
\end{eqnarray}
where the $n$-dimensional matrix with entries $E_A^a(y)$ is the reciprocal of the $n$-dimensional matrix with entries $\omega_a^A(y)$. Thus $\det E(y)=\left( \det \omega(y)\right)^{-1}$, and by (\ref{gammapbm}) we have
\begin{eqnarray*}
\det \gamma & = & \det m \left(\det E\right)^2 \, , \\
\sqrt{\det \gamma} \det \omega & = & \sqrt{\det m} \, .
\end{eqnarray*}
For the stresses $S$ on $\mathcal N$ and $T=\phi_* S$ on $\mathcal M$ we have:
\begin{eqnarray}
S^{ab} & = & \pi^{AB} E_A^aE_B^b \, , \label{defSpfpi} \\
T^{ij} & = & S^{ab}\frac{\partial x^i}{\partial y^a}\frac{\partial x^j}{\partial y^b} \, .
\end{eqnarray}
We will now formulate the Euler-Lagrange equations on both the material manifold and on space. 
We calculate for the variation of $\gamma_{AB}$ from (\ref{gammapbm}) using linear coordinates in $E^n$ (i.e.~$\partial_k g_{ij}=0$)
\begin{equation*}
\dot \gamma_{AB}=E_A^a E_B^b \dot m_{ab} \quad , \quad \dot m_{ab} = g_{ij}\left(\frac{\partial \dot x^i}{\partial y^a}\frac{\partial x^j}{\partial y^b}+\frac{\partial x^i}{\partial y^a}\frac{\partial \dot x^j}{\partial y^b}\right) \, . 
\end{equation*}
The first variation of the energy thus reads 
\begin{equation} \label{dotE} 
\dot E =  -\int\limits_{\Omega} \frac 1 2 S^{ab}\dot m_{ab} \sqrt{\det m} \, d^ny = -\int\limits_{\Omega}  S^{ab}g_{ij}\frac{\partial \dot x^i}{\partial y^a}\frac{\partial x^j}{\partial y^b}\sqrt{\det m} \, d^ny  \, ,
\end{equation}
where $\dot x^i$ is the variation of $x^i$. Hence (\ref{dotE}) is
\begin{equation} \label{starS}
-\int\limits_{\Omega} S^{ab}g_{ij}\frac{\partial \dot x^i}{\partial y^a}\frac{\partial x^j}{\partial y^b} d\mu_m \, ,
\end{equation}
where $d\mu_m$ is the volume form on $\Omega$ associated to the metric $m$. By the divergence theorem, (\ref{starS}) becomes
\begin{equation} \label{dstarS}
-\int\limits_{\partial \Omega} S^{ab} g_{ij}\frac{\partial x^j}{\partial y^b} M_a \dot x^i\, dA_m + \int\limits_{\Omega} \dot x^i \overset{m}{\nabla}_a Q_i^a d\mu_m \, ,
\end{equation}
where $Q_i$ are the vectorfields with components
\begin{equation} \label{Qia}
Q_i^a = S^{ab}g_{ij}\frac{\partial x^j}{\partial y^b}
\end{equation}
and $\overset{m}{\nabla}$ is the covariant derivative operator on $\mathcal N$ associated to $m$. Thus, in local coordinates on $\mathcal N$,
\begin{equation} \label{covderQ}
\overset{m}{\nabla}_a Q_i^a = \frac{1}{\sqrt{\det m}} \frac{\partial}{\partial y^a} \left(\sqrt{\det m}Q_i^a\right) \, .
\end{equation}
In (\ref{dstarS}), $M^a$ are the components of the outward unit normal to $\partial \Omega$ with respect to $m$, $M_a=m_{ab}M^b$, and $dA_m$ is the area element of $\partial\Omega$ associated to the metric induced by $m$. So $M_a$ are the components of the covectorfield along $\partial \Omega$ whose null space is the tangent plane to $\partial \Omega$ at each point. 

The Euler-Lagrange equations are obtained by requiring $\dot E=0$ for variations $\dot x^i$ which are supported in $\Omega$ and vanish on $\partial \Omega$. In view of (\ref{dstarS}), (\ref{Qia}), (\ref{covderQ}), the Euler-Lagrange equations are:
\begin{equation} \label{equivEL}
\frac{\partial}{\partial y^a}\left(Q_i^a\sqrt{\det m}\right)=0 \quad \textrm{on} \, \, \Omega \, .
\end{equation}
Note that with $Q_i^a$ defined in (\ref{Qia}) we have using (\ref{mabpbm})
\begin{equation} \label{QS}
Q_i^a \frac{\partial x^i}{\partial y^b}= g_{ij}S^{ac} \frac{\partial x^j}{\partial y^c} \frac{\partial x^i}{\partial y^b} = m_{bc} S^{ac} = S_b^a \, .
\end{equation}
Consider
\begin{equation} \label{covdermS}
\overset{m}{\nabla}_a S_b^a = \frac{1}{\sqrt{\det m}} \frac{\partial}{\partial y^a} \left(\sqrt{\det m}S_b^a\right)-\overset{m}{\Gamma_{ab}^c} S_c^a \, .
\end{equation}
The first term on the right-hand side of (\ref{covdermS}) is, by (\ref{equivEL}), (\ref{QS}),
\begin{equation*} 
\frac{1}{\sqrt{\det m}}\frac{\partial}{\partial y^a} \left(\sqrt{\det m}Q_i^a \frac{\partial x^i}{\partial y^b}\right) = Q_i^a \frac{\partial^2 x^i}{\partial y^a \partial y^b}=  S^a_c \frac{\partial y^c}{\partial x^i} \frac{\partial^2 x^i}{\partial y^a \partial y^b} \, .
\end{equation*} 
Thus (\ref{equivEL}) is equivalent to:
\begin{equation} \label{nablaS}
\overset{m}{\nabla}S_b^a = \left( \frac{\partial y^c}{\partial x^i} \frac{\partial^2 x^i}{\partial y^a \partial y^b}-\overset{m}{\Gamma_{ab}^c} \right) S_c^a \, .
\end{equation}
We shall show now that
\begin{equation}\label{gammam}
\overset{m}{\Gamma_{ab}^c}=\frac{\partial y^c}{\partial x^i} \frac{\partial^2 x^i}{\partial y^a \partial y^b} \, ,
\end{equation}
so that the factor in parenthesis in (\ref{nablaS}) vanishes. In fact, (\ref{gammam}) is equivalent to:
\begin{equation*}
\overset{m}{\Gamma}_{ab,c}=m_{cd} \frac{\partial y^d}{\partial x^i} \frac{\partial^2 x^i}{\partial y^a \partial y^b} \, ,
\end{equation*}
which, since $m_{cd} \frac{\partial y^d}{\partial x^i}= g_{ij}\frac{\partial x^j}{\partial y^c}$, is in turn equivalent to 
\begin{equation} \label{gammaml}
\overset{m}{\Gamma}_{ab,c}=  g_{ij}\frac{\partial x^j}{\partial y^c} \frac{\partial^2 x^i}{\partial y^a \partial y^b} \, .
\end{equation}
Now we have by the definition of the Christoffel symbols $\overset{m}{\Gamma}_{ab,c}$ and $m$ from (\ref{mabpbm}):
\begin{equation*}
\overset{m}{\Gamma}_{ab,c} =  \frac 1 2 \left(\frac{\partial m_{ac}}{\partial y^b}+ \frac{\partial m_{bc}}{\partial y^a} - \frac{\partial m_{ab}}{\partial y^c}\right) = g_{ij}  \frac{\partial x^j}{\partial y^c} \frac{\partial^2 x^i}{\partial y^a \partial y^b}  \, .
\end{equation*}
Thus (\ref{gammaml}) is indeed verified. We conclude that the Euler-Lagrange equations (\ref{equivEL}) are equivalent to:
\begin{equation} \label{divstresszeroN}
\overset{m}{\nabla}_aS_b^a=0 \quad , \, \textrm{or} \quad \overset{m}{\nabla}_a S^{ab} = 0 \quad \textrm{on} \, \, \Omega \, .
\end{equation}
{\remark $T^{ij}$ being the push-forward of $S^{ab}$ by $\phi$ and $m_{ab}$ being the pull-back of $g_{ij}$ by $\phi$, the last equations (\ref{divstresszeroN}) are in turn equivalent to
\begin{equation} \label{divstresszeroE}
\overset{g}{\nabla}_i T^{ij} =0 \quad , \, \textrm{or} \quad \partial_i T^{ij} = 0  \quad \textrm{on} \, \, \phi(\Omega) 
\end{equation}
in linear coordinates.}

\subsection{Boundary Conditions}
Let now equations (\ref{equivEL}) hold. Requiring again $\dot E=0$, this time for arbitrary variations $\dot x$, we obtain from the first term of (\ref{dstarS})
\begin{equation} \label{bdrycondS}
0 = \int\limits_{\partial\Omega} S^{ab}g_{ij}\frac{\partial x^j}{\partial y^b}M_a \dot x^i \, dA_m \, .
\end{equation}
Here the covectorfield along $\partial \Omega$ with components $M_a$ is defined by
\begin{equation*}
M_aY^a=0 \quad : \, \forall Y \in T_y \partial \Omega \, ,
\end{equation*}
together with the condition $M_aY^a>0$ for all $Y \in T_y \Omega$, $y \in \partial \Omega$, such that $Y$ points to the exterior of $\Omega$, and the normalization condition $\left(m^{-1}\right)^{ab} M_a M_b = 1$.
As (\ref{bdrycondS}) is to hold for arbitrary variations $\dot x^i$, we obtain the boundary conditions
\begin{equation} \label{bdryc}
S^{ab}g_{ij}\frac{\partial x^j}{\partial y^b} M_a = 0  \quad , \, \textrm{or} \quad S^{ab}M_b=0 \quad \textrm{on} \, \,  \partial \Omega \, .
\end{equation}
Let  $X^i=\frac{\partial x^i}{\partial y^a}Y^a$. Setting $M_a=\frac{\partial x^i}{\partial y^a}N_i$ we have:
\begin{equation} \label{defNi}
N_i X^i =0 \quad : \, \forall X \in T_x \phi(\partial \Omega) \, . 
\end{equation}
In fact, the covectorfield along $\phi(\partial \Omega)$ with components $N_i$ is defined by (\ref{defNi}), together with the condition that 
\begin{equation*}
N_i X^i > 0 \quad: \, \forall X \in T_x \phi(\Omega) \, ,
\end{equation*}
$x \in \phi(\partial \Omega)=\partial (\phi(\Omega))$, such that $X$ points to the exterior of $\phi(\Omega)$, and the normalization condition $\left(g^{-1}\right)^{ij} N_i N_j = 1$.
In other words, $N^i:=(g^{-1})^{ij} N_j$ are the components of the outward unit normal to $\partial(\phi(\Omega))$ in $E^n$. 
In view of the fact that $S^{ab}\frac{\partial x^j}{\partial y^b}=T^{ij} \frac{\partial y^a}{\partial x^i}$, the boundary conditions (\ref{bdryc}) are equivalent to
\begin{equation}\label{bdrycE}
T^{ij}N_i = 0 \quad \textrm{on} \, \,\phi(\partial \Omega)=\partial \phi(\Omega) \, ,
\end{equation}
which means that no forces are acting on the boundary $\partial \phi(\Omega)$.

\subsection{Legendre-Hadamard Condition} \label{SectLegHad}

The \emph{Legendre-Hadamard} condition in the static case is just the second part of the hyperbolicity condition stated in (\ref{hposdef}), see \cite{C1} for details. 
Here, it is formulated in terms of $\frac{\partial \phi^i}{\partial y^a}(y)=v^i_a$.
It requires for $\xi_a, \xi_b \in T^*_y\mathcal N, \eta^i, \eta^j \in T_{\phi(y)}\mathcal M$ that
\begin{equation} \label{L-H}
\frac 1 4 \frac{\partial ^2 e}{\partial v^i_a \partial v^j_b} \xi_a \xi_b \eta^i \eta^j \quad : \quad \textrm{positive for}\,\, \xi, \eta \neq 0 \, .
\end{equation}
We differentiate $e$ with respect to $v^j_b$:
\begin{equation*}
\frac{\partial e}{\partial v^j_b}=\frac{\partial e}{\partial \gamma_{AB}}\frac{\partial\gamma_{AB}}{\partial v^j_b} \, , 
\end{equation*}
where $\gamma_{AB}=E_A^a E_B^b g_{ij} v^i_a v^j_b$, hence
\begin{equation*}
\frac{\partial\gamma_{AB}}{\partial v^j_b}=2E_{(A}^a E_{B)}^b g_{ij} v^i_a \, ,
\end{equation*}
($E_{(A}^a E_{B)}^b$ denotes symmetrization in $A$ and $B$) and thus
\begin{equation} \label{firstdere}
\frac 1 2 \frac{\partial e}{\partial v^j_b}=\frac{\partial e}{\partial \gamma_{AB}}E_A^a E_B^b g_{ij} v^i_a \, .
\end{equation}
We differentiate (\ref{firstdere}) once more with respect to $v^i_a$ to get
\begin{equation*}
\frac 1 4 \frac{\partial ^2 e}{\partial v^i_a \partial v^j_b} = \frac{\partial ^2 e}{\partial\gamma_{AB}\partial\gamma_{CD}}g_{li}g_{kj} v^l_c v^k_d E_A^d E_B^b E_C^c E_D^a + \frac 1 2 \frac{\partial e}{\partial\gamma_{AB}}E_A^a E_B^b g_{ij}  \, .
\end{equation*}
Therefore,
\begin{equation}
\frac 1 4 \frac{\partial ^2 e}{\partial v^i_a \partial v^j_b} \xi_a \xi_b \eta^i \eta ^j = \frac 1 2 \frac{\partial e}{\partial\gamma_{AB}}|\eta|^2 \xi_A \xi_B + \frac{\partial ^2 e}{\partial\gamma_{AB}\partial\gamma_{CD}} \eta_C \eta_A \xi_B \xi_D \, , \label{LegHadgen}
\end{equation}
where $|\eta|^2=g_{ij}\eta^i\eta^j$, $\xi_A=E_A^a\xi_a$ and $\eta_C= E_C^c v^l_c g_{li} \eta^i$. 
Thus the Legendre-Hadamard condition reads:
\begin{equation*}
\frac{\partial^2 e}{\partial\gamma_{AB}\partial\gamma_{CD}} \eta_C \eta_A \xi_B \xi_D + \frac 1 2 \frac{\partial e}{\partial\gamma_{AB}}|\eta|^2 \xi_A \xi_B > 0 \quad (\eta, \xi \neq 0) \, .
\end{equation*}
At $\stackrel{\circ}{\gamma}_{AB}$ the Legendre-Hadamard condition reduces to
\begin{equation*}
 \frac{\partial ^2 e}{\partial \gamma_{AB} \partial \gamma_{CD}} \eta_C \eta_A \xi_B \xi_D > 0  \qquad (\eta ,\, \xi \neq 0) \, ,
\end{equation*}
because the first term on the right-hand side of (\ref{LegHadgen}) (containing $\frac{\partial e}{\partial \gamma_{AB}}$) vanishes.
Note that this condition is satisfied by virtue of the Postulate \ref{pmine} that $e$ has a strict minimum at $\stackrel{\circ}{\gamma}$, since this implies
\begin{equation*}
\frac{\partial ^2 e}{\partial \gamma_{AB} \partial \gamma_{CD}} (\stackrel{\circ}{\gamma}) \sigma_{AB} \sigma_{CD} > 0
\end{equation*}
for any non-zero symmetric $\sigma_{AB}$ and, in particular, for
\begin{equation*}
\sigma_{AB}=\xi_A\eta_B+\eta_A\xi_B \, .
\end{equation*} 
In the general case, we define $\eta^A$ by $\eta^A E_A^a v^i_a = \eta^i$ and we have
\begin{equation*}
|\eta|^2=g_{ij}\eta^i \eta^j=g_{ij} v^i_a v^j_b E_A^a E_B^b \eta^A \eta^B = \gamma_{AB}\eta^A \eta^B \, .
\end{equation*}
Additionally,
\begin{equation*}
\gamma_{AB} \eta^B= g_{ij} v^i_a v^j_b E_A^a E_B^b \eta^B= g_{ij} v^i_a E_A^a \eta^j = \eta_A \, ,
\end{equation*}
and hence $\eta^A=\left(\gamma^{-1}\right)^{AB}\eta_B$.
We finally have for the inverse $\left(\gamma^{-1}\right)^{AB}$ of $\gamma_{AB}$ with $|\eta|^2=\eta_A\eta^A= \eta_A\left(\gamma^{-1}\right)^{AB}\eta_B$ 
\begin{equation*}
|\eta|^2=\left(\gamma^{-1}\right)^{AB}\eta_A\eta_B \, .
\end{equation*}

\section{Uniform Distribution of Edge Dislocations in $2$d}

The case of a uniform distribution of 2-dimensional \emph{edge dislocations} is realized by the material manifold $\mathcal N$ being the affine group, see \ref{edag}.
The corresponding metric $\stackrel{\circ}{n}$ from (\ref{hypmetafgr}) is isometric to the metric of the hyperbolic plane.

In general, we consider the total energy $E$ (\ref{energy}) of a domain $\Omega$ in the material manifold $\mathcal N$.
In agreement with Postulate \ref{pmine}, we assume that the energy per unit mass $e$ has a strict minimum at $\stackrel{\circ}{\gamma}$.
The symmetry of the problem motivates the choice of an isotropic energy function $e$ that is a symmetric function of the eigenvalues of $m=\phi^* g$ relative to $\stackrel{\circ}{n}$.
Therefore, in this model case, the crystalline structure is eliminated in favor of the Riemannian manifold $(\mathcal N, \stackrel{\circ}{n})$. 

\subsection{Isotropic Energy Function}\label{ToyNrgS}
Concerning our toy energy function, we make the following basic choice:
\begin{equation} \label{toynrg}
e(\gamma)=e(\lambda_1, \ldots , \lambda_n)= \frac 1 2 \sum_{k=1}^n \left(\lambda_k -1\right)^2 \geq 0 
\end{equation}
that satisfies $e(\lambda_1=1, \ldots , \lambda_n=1)=0$. Thus we have a strict minimum of the energy density at $\gamma= \, \stackrel{\circ}{\gamma}$. Note that, in (\ref{toynrg}), we are subtracting the rest mass energy, something which does not affect the variation of $E$, see also the discussion at the end of Section \ref{staticsetting}.

Therefore,
\begin{equation} \label{ToyEnergy}
e(\gamma)=\frac 1 2 \sum_{k=1}^n \left(\lambda_k^2-2\lambda_k+1\right)=\frac 1 2 \mathrm{tr}\, \gamma^2 -\mathrm{tr}\, \gamma +\frac n 2 =\frac 1 2 \sum_{A,B=1}^n \gamma_{AB}^2- \sum_{A=1}^n \gamma_{AA} +\frac n 2 \, . 
\end{equation}
Here and in the following the basis $(E_A: A=1, \ldots, n)$ for $\mathcal V$ is chosen to be orthonormal relative to $\stackrel{\circ}{\gamma}$, hence $\stackrel{\circ}{\gamma}_{AB}=\delta_{AB}$. For such a choice to be compatible with the condition $\omega(E_1, \ldots, E_n)=1$, we must choose the physical unit of mass so that the mass density corresponding to $\stackrel{\circ}{\gamma}$ is equal to $1$.

In case that the metric of the target space is Euclidean, $g_{ij}=\delta_{ij}$, we find from (\ref{gammapbm})
\begin{equation}\label{gammacomp}
\gamma_{AB}= E_A^a E_B^b \frac{\partial x^i}{\partial y^a}\frac{\partial x^j}{\partial y^b}\delta_{ij}=E_A^a E_B^b \frac{\partial \phi}{\partial y^a}\cdot\frac{\partial \phi}{\partial y^b} \, ,
\end{equation}
where the dot denotes the Euclidean inner product.
For the total energy $E$ (\ref{energy}) of a domain $\Omega \subset \subset \mathcal N$, where $\omega$ coincides with the volume form on $\mathcal V$ induced by $\stackrel{\circ}{\gamma}$, we have $d\mu_{\omega}=d\mu_{\stackrel{\circ}{n}}$ (the volume form on $\mathcal N$ corresponding to the Riemannian metric $\stackrel{\circ}{n}$), and we obtain for the total energy $E$ expressed in terms of the components of $\gamma$,
\begin{equation} \label{nrgcomp}
E=\int\limits_{\Omega}\left( \frac 1 2 \sum_{A,B=1}^n \gamma_{AB}^2 -\sum_{A=1}^n \gamma_{AA} + \frac n 2 \right) d\mu_{\stackrel{\circ}{n}} \, .
\end{equation}

In the case $n=2$, the invariants of a linear mapping are, respectively, the trace and the determinant. Expressed in terms of the eigenvalues $\lambda_1, \lambda_2$, they read:
\begin{eqnarray*}
\tau & = & \lambda_1 + \lambda_2= \textrm{tr}\, m \, , \\
\delta & = & \lambda_1 \lambda_2 = \det m \, .
\end{eqnarray*}
We have:
\begin{equation*}
\tau^2=\lambda_1^2 + \lambda_2^2 + 2 \delta  \, .
\end{equation*}
The toy energy (\ref{toynrg}) thus reads
\begin{equation*}
e(\lambda_1, \lambda_2)=\frac 1 2 \left( (\lambda_1 -1)^2+ (\lambda_1 -1)^2\right) = \frac 1 2 \left( \tau^2  -2 \tau - 2 \delta + 2\right) \, .
\end{equation*} 

\subsection{Expansion of the Hyperbolic Metric} \label{HyperbolicExpansion}
We shall perform a dilation of the hyperbolic plane in order to be able to establish an existence result for a fixed domain, from which we can deduce an analogous result for energy minimizing mappings from a suitably small domain in the original hyperbolic plane to the Euclidean plane.

We start with the usual metric, of constant curvature $-1$ for the hyperbolic plane $H$, expressed in polar normal coordinates $(R, \theta)$:
\begin{equation*}
ds_{H}^2 = dR^2 + \sinh^2 R d\theta^2 \, .
\end{equation*}
Given a parameter $\varepsilon \in \mathbb R^+$, we dilate $H$ by a factor $\varepsilon^{-1}$ obtaining $H_{\varepsilon}$. The corresponding metric, of curvature $-\varepsilon^2$, is:
\begin{equation*}
ds_{H_{\varepsilon}}^2 = \varepsilon^{-2} \left( dR^2 + \sinh^2 R d\theta^2 \right) =dr^2 + F_{\varepsilon}^2(r) d\theta^2 \, ,
\end{equation*}
where $r=\varepsilon^{-1}R$ and 
\begin{equation*}
F_{\varepsilon}(r)= \varepsilon^{-1} \sinh \left(\varepsilon r \right)=\varepsilon^{-1}\left(\varepsilon r + \frac 1 {3!}\left(\varepsilon r\right)^3 +\frac 1 {5!}\left(\varepsilon r\right)^5+  \ldots \right) \, .
\end{equation*}
$(r, \theta)$ are polar normal coordinates in $H_{\varepsilon}$. The expression for $F_{\epsilon}^2(r)$ thus reads:
\begin{equation} \label{epsilonexp}
F_{\varepsilon}^2(r)=r^2\left(1+\frac 1 3 (\varepsilon r)^2 + \frac 2 {45} (\varepsilon r)^4 + \ldots \right) \, .
\end{equation}
The main point in the expansion (\ref{epsilonexp}) is that $F_{\varepsilon}^2(r)$ is a real analytic function converging to $r^2$ for $\varepsilon \to 0$, corresponding to the transition of the negatively curved hyperbolic plane to the flat Euclidean plane.

In order to simplify the calculations, we go back to rectangular coordinates.
We denote rectangular normal coordinates in $H_{\varepsilon}$ by $(y^i: i=1,2)$
\begin{eqnarray*}
y^1 & = & r \cos \theta \, , \\
y^2 & = & r \sin \theta \, .
\end{eqnarray*}
Then
\begin{equation*}
ds_{H_{\varepsilon}}^2 = \left(dy^1\right)^2 + \left(dy^2\right)^2 + \varepsilon^2\left(\frac 1 3 + \frac 2{45}(\varepsilon r)^2+ \ldots \right) r^4 d\theta^2 \, .
\end{equation*}
Since
\begin{equation*}
d\theta = \frac 1{r^2}\left(-y^2 dy^1+ y^1 dy^2 \right) \, ,
\end{equation*}
denoting by $ds_E^2$ the Euclidean metric
\begin{equation*}
ds_E^2= \left(dy^1\right)^2 + \left(dy^2\right)^2 \, ,
\end{equation*}
the metric of $H_{\varepsilon}$ takes the form:
\begin{equation} \label{hypmetrectcoords}
ds_{H_{\varepsilon}}^2 = ds_E^2 + \varepsilon^2 \left(\frac 1 3 + \frac 2 {45} (\varepsilon r)^2+ \ldots \right)\left(-y^2 dy^1 + y^1 dy^2 \right)^2 \, .
\end{equation}

The crucial point in the above expression for the metric $h$ of $H_{\varepsilon}$ 
is that it takes the form 
\begin{equation} \label{hypmetexpeps}
h_{ab}=\delta_{ab}+ \varepsilon^2 f_{ab} \, ,
\end{equation}
where $f_{ab}$ are analytic functions in $(y^1,y^2)$ and $\varepsilon^2$. In fact,
\begin{equation} \label{rectmeth}
\left(-y^2 dy^1+y^1 dy^2 \right)^2 = \left(r^2 \delta_{ab}-y^a y^b \right)dy^a dy^b \, ,
\end{equation}
hence, by (\ref{hypmetrectcoords}), (\ref{hypmetexpeps}) and (\ref{rectmeth}),
\begin{equation}
f_{ab}=\left(\frac 1 3 + \frac 2 {45} (\varepsilon r)^2+ \ldots \right)\left(r^2 \delta_{ab} - y^a y^b \right) \, .
\end{equation}
That is,
\begin{equation*}
f_{ab}= f(\varepsilon^2 r^2)\left(r^2 \delta_{ab} - y^a y^b \right) \quad , \, \textrm{where} \quad f(x)= \frac 1 3 + \frac 2 {45} x + \ldots  \, .
\end{equation*}

\section{Uniform Distribution of Screw Dislocations in $3$d} \label{udsd}
The case of a uniform distribution of (3-dimensional) \emph{screw dislocations} is realized when the material manifold $\mathcal N$ is the Heisenberg group, see \ref{sdhg}.
The corresponding metric $\stackrel{\circ}{n}$ from (\ref{homspacemet}) is isometric to the metric of a homogeneous but anisotropic space.

\subsection{Anisotropic Energy Function}
The symmetry of the problem in this case motivates the choice of an anisotropic energy function $e$, since the dislocation lines have a preferred direction.
Therefore, in this model case, the crystalline structure cannot be eliminated in favor of the Riemannian manifold $(\mathcal N, \stackrel{\circ}{n})$, in contrast to the $2$-dimensional case previously discussed.  

The question of the right choice of energy per unit mass for the Heisenberg group is investigated in the following.
Consider the orthogonal transformation $O$ of $\mathcal V$,
given by $(X',Y',Z')=O(X,Y,Z)$ with
\begin{eqnarray}
X' & = & \cos \theta \cdot X + \sin \theta \cdot Y \, , \nonumber \\
Y' & = & -\sin \theta \cdot X + \cos \theta \cdot Y \, , \label{orthtrsf} \\
Z' & = & Z \, . \nonumber
\end{eqnarray}
The commutation relations $[X,Y]=Z$, $[X,Z]=[Y,Z]=0$ are preserved by this transformation since
\begin{eqnarray*}
\left[X',Y'\right] & = & \cos^2 \theta [X,Y] - \sin^2 \theta [Y,X]=\left(\cos^2 \theta +\sin^2 \theta\right) [X,Y]=Z=Z' \, ,  \\ 
\left[X',Z'\right] & = & [Y',Z'] = 0 \, .
\end{eqnarray*}
Consider the metric $m=\phi^* g$. Since $m$ is symmetric, we have generally six independent components in three dimensions. This also holds for $\gamma$, the corresponding inner product on the crystalline structure:
\begin{equation*}
\gamma = \left( \begin{array}{cc} \bar{\gamma}_{AB} & \theta_A \\ \theta_A^T & \rho \end{array} \right) \, ,
\end{equation*}
where the six components are 
\begin{eqnarray*}
\bar{\gamma}_{AB} & = & \gamma_{AB} \qquad A,B=1,2  \, ,\\
\theta_A & = & \gamma_{A3} \qquad A=1,2 \, , \\
\rho & = & \gamma_{33} \, .
\end{eqnarray*}
The energy density $e$ depends on the variables $(\bar{\gamma}, \theta, \rho)$. $e$ must be invariant under orthogonal transformations (\ref{orthtrsf}), that is $e(\bar{\gamma}, \theta, \rho)= e (O \bar{\gamma} \widetilde O, O \theta, \rho)$, for any $O$ given by (\ref{orthtrsf}). 
$|\theta|^2$ is such an invariant; together with the invariants $\textrm{tr}\,\bar{\gamma}$ and $\textrm{tr}\,\bar{\gamma}^2$, which are the same as in the two-dimensional case, we have the following four invariants:
\begin{equation} \label{invariants3d}
\textrm{tr}\,\bar{\gamma} \, , \, \textrm{tr}\,\bar{\gamma}^2 \, ,\, \abs{\theta}^2 \, , \,  \rho \, .
\end{equation}
Thus, starting with six variables, we have eliminated two and are left with an energy per unit mass of the form
\begin{equation*}
e=e(\mu_1, \mu_2, \abs{\theta}^2, \rho) \, ,
\end{equation*}
where $\mu_{1,2}$ are the eigenvalues of $\bar{\gamma}_{AB}$ with respect to $\overset{\circ}{\bar{\gamma}}_{AB}$ in the $XY$-plane ($A,B=1,2$).

We may choose in particular: 
\begin{equation}\label{AnisotropicEnergy}
e=\frac 1 2 \left( (\mu_1 -1)^2 + (\mu_2 -1)^2\right) +\frac{\alpha}{2} \abs{\theta}^2+ \frac{\beta}{2} (\rho-1)^2 \, ,
\end{equation}
which has a minimum at the identity $\gamma_{AB}=\stackrel{\circ}{\gamma}_{AB}=\delta_{AB}$.
It is not surprising that the screw dislocations, which have a distinguished direction, break the isotropy and we are lead to an anisotropic energy of the form (\ref{AnisotropicEnergy}), which is invariant under the transformation (\ref{orthtrsf}).

\subsection{The Heisenberg Group as a Homogeneous Space} \label{HGahs}
Let 
\begin{equation} \label{vectfHeis}
X=\frac{\partial}{\partial x} \quad , \quad Y=\frac{\partial}{\partial y}+x \frac{\partial}{\partial z} \quad , \quad  Z=\frac{\partial}{\partial z}  
\end{equation}
be, as in Section \ref{udsd}, the basis for the Lie algebra $\mathcal V$ of the Heisenberg group. We have the commutation relations
\begin{equation} \label{commutationHeisenberg}
[X, Y]= Z \quad , \quad  [X, Z]=[Y, Z]=0 \, .
\end{equation}
The basis of $1$-forms $(\xi, \zeta, \eta)$ dual to $X, Y, Z$ is
\begin{equation*}
\xi=dx \quad , \quad \zeta = dy \quad , \quad \eta= dz - x dy \, .
\end{equation*}
A left-invariant metric on the Heisenberg group is, up to isometries and an overall scale factor, given by:
\begin{equation} \label{HGmetrit}
\stackrel{\circ}{n} = \xi \otimes \xi + \zeta \otimes \zeta + e^{2\beta} \eta \otimes \eta \, ,
\end{equation}
that is:
\begin{equation} \label{Heisenbergmetric}
ds^2= dx^2 + dy^2 + e^{2\beta}(dz- x dy)^2  \, ,
\end{equation}
where $\beta$ is a real constant.
The metric (\ref{Heisenbergmetric}) is called a Bianchi type VII metric (for the classification into Bianchi types, see \cite{LL2}).
It represents a homogeneous space which is not isotropic.

The inner product $\stackrel{\circ}{\gamma}$ on $\mathcal V$ giving rise to the metric $\stackrel{\circ}{n}$ is given in the basis $(X,Y,Z)$ by:
\begin{eqnarray*}
\stackrel{\circ}{\gamma}(X,X) & = & \stackrel{\circ}{\gamma}(Y,Y) \quad , \quad \stackrel{\circ}{\gamma}(Z,Z)=e^{2\beta} \, ,  \\
\stackrel{\circ}{\gamma}(X,Y) & = & \stackrel{\circ}{\gamma}(X,Z) = \stackrel{\circ}{\gamma} (Y,Z) = 0 \, . 
\end{eqnarray*}
Thus defining 
\begin{equation} \label{onbHG}
E_1=X \quad , \quad E_2 = Y \quad , \quad E_3=e^{-\beta} Z \, ,
\end{equation}
$(E_A: A=1,2,3)$ is an orthonormal basis for $\stackrel{\circ}{\gamma}$:
\begin{equation} \label{minmetHG}
\stackrel{\circ}{\gamma}_{AB} := \stackrel{\circ}{\gamma}(E_A,E_B) = \delta_{AB} \, .
\end{equation}
The dual basis is:
\begin{equation}\label{dualbHG}
\omega^1 = \xi \quad , \quad \omega^2 = \zeta \quad , \quad \omega^3 = e^{\beta} \eta \, .
\end{equation}
In the following we shall denote the metric $\stackrel{\circ}{n}$ by $h$. Thus, in terms of the basis $(\omega^A: A=1,2,3)$ we have:
\begin{equation*}
h = \sum_A \omega^A \otimes \omega^A \, .
\end{equation*}

Now $(\mathcal N, h)$ being a homogeneous space, any point in $\mathcal N$ may be taken as the origin. Consider a local coordinate system $(y^a: a=1,2,3)$ on $\mathcal N$ with origin a given point. Since in the Euclidean space $E^3$ we may set up a rectangular coordinate system $(x^i: i=1,2,3)$, a mapping $id$ is then defined in the domain of the coordinate system on $\mathcal N$ by:
\begin{equation*}
id(y^1,y^2,y^3)=(y^1,y^2,y^3) \in E^3 \, ,
\end{equation*}
that is, $id$ is expressed in the respective coordinate systems as the identity mapping. Thus $m$, the pullback to $\mathcal N$ by $id$ of the Euclidean metric $\delta_{ij} dx^i \otimes dx^j$ is 
\begin{equation*}
m = \delta_{ab} dy^a \otimes dy^b 
\end{equation*} 
in the local coordinates $(y^a: a=1,2,3)$ on $\mathcal N$. The following problem then arises in connection with the analytic method to be presented below: given any point in $\mathcal N$, find a local coordinate system $(y^a: a=1,2,3)$ on $\mathcal N$ with origin the given point such that $\gamma_{AB}$, the components at $y$ of the corresponding inner product on $\mathcal V$ in the basis $(E_A: A=1,2,3)$, that is:
\begin{equation*}
\gamma_{AB}(y) = m_{ab}(y) E_A^a(y) E_B^b(y) = \delta_{ab} E_A^a(y) E_B^b(y) 
\end{equation*}
satisfy:
\begin{equation*}
\gamma_{AB}(y)-\stackrel{\circ}{\gamma}_{AB} = O(|y|^2) \, .
\end{equation*}
(Recall that $\stackrel{\circ}{\gamma}_{AB}=\delta_{AB}$.) The solution is simply to set up Riemannian normal coordinates on $(\mathcal N,h)$ with origin the given point. For, the components of $h$ in such coordinates satisfy:
\begin{equation*}
h_{ab}(y)=\delta_{ab}+ O(|y|^2) \, .
\end{equation*}
Since
\begin{equation*}
h_{ab}(y) E_A^a(y) E_B^b(y) = \delta_{AB}
\end{equation*}
identically, then
\begin{equation*}
\gamma_{AB}(y)-\delta_{AB}= \left(\delta_{ab}-h_{ab}(y)\right) E_A^a(y) E_B^b(y) = O(|y|^2) \, ,
\end{equation*}
as required.

\section{Scaling Properties} \label{scaling}

\subsection{The General Isotropic Case}
Let $\mathcal N=\Omega \subset \mathbb R^n$ and $\phi(\Omega) \subset E^n$ be the domain and the target space, respectively, of the mapping $\phi$ with the same properties as in (\ref{defphi}). We fix coordinates in $\mathcal N$ and also fix an origin in $E^n$.
We may assume that the coordinate origin in $\mathcal N$ is mapped by $\phi$ to the origin in $E^n$. Consider
\begin{equation*}
\tilde \phi(y)=l \phi \left(\frac y l \right) \, ,
\end{equation*}
where $l>0$ is a given positive constant.
If $U$ is a domain in $E^n$, we denote by $lU$ the domain $\{ lx : x \in U\}$. Similarly for a domain in $\mathbb R^n$. The domain of $\tilde \phi$ is $\tilde \Omega= l \Omega$, and
\begin{equation*}
\tilde \phi : \tilde \Omega = l \Omega \to \tilde \phi (\tilde \Omega) =l \phi(\Omega) \, .
\end{equation*}
The change from $\phi$ to $\tilde \phi$ induces a change from $m$ to $\tilde m$ (and similarly for the inverses) as follows
\begin{equation}\label{scalemetric}
\tilde m_{ab} (y) = \sum_{i=1}^n \frac{\partial \tilde \phi^i(y)}{\partial y^a}\frac{\partial \tilde \phi^i(y)}{\partial y^b}= m_{ab}\left(\frac y l\right) \quad , \quad \left(\tilde m^{-1}\right)^{ab}(y)=\left(m^{-1}\right)^{ab}\left(\frac y l \right) \, .
\end{equation}
For, since $\tilde \phi^i(y)=l \phi^i \left( \frac y l \right)$, we have
\begin{equation} \label{scalephi}
\frac{\partial \tilde \phi^i(y)}{\partial y^a}=l\frac{\partial \phi^i(y/l)}{\partial y^a}\frac 1 l = \frac{\partial \phi^i(y/l)}{\partial y^a} \, .
\end{equation}
We consider here the case of an isotropic energy function. We have seen that in this case the crystalline structure $\mathcal V$ can be eliminated in favor of a Riemannian metric $h$.

Let us define a new metric $\tilde h$ on $\mathcal N$ by $\tilde h_{ab}(y)=h_{ab}\left(\frac y l \right)$.
Note that $\tilde h$ is \emph{not} isometric to $h$. In the case of the toy energy function (\ref{ToyEnergy}), we have (see (\ref{stressonN}) below),
\begin{equation*}
\sqrt{\frac{\det \tilde m}{\det \tilde h}}\tilde S^{ab}= 2 \left(\tilde h^{-1}\right)^{ac} \left(\tilde h^{-1}\right)^{bd}\left(\tilde h_{cd}-\tilde m_{cd}\right) \, ,
\end{equation*}
and similarly with $\tilde h$, $\tilde m$, $\tilde S$ replaced by $h$, $m$, $S$.
Using (\ref{scalemetric}) we then obtain:
\begin{equation} \label{scalestress}
\tilde S^{ab}(y) =S^{ab}\left(\frac y l \right) \, .
\end{equation}
Now the equations (\ref{divstresszeroN}) transform as follows. From the definition of the covariant derivative, we have
\begin{equation} \label{defcovderS}
\overset{m}{\nabla}_b S^{ab}=\frac{\partial S^{ab}}{\partial y^b} + \overset{m}{\Gamma_{bc}^a} S^{bc} + \overset{m}{\Gamma_{ac}^b} S^{ac} \, ,
\end{equation}
where $\overset{m}{\Gamma_{bc}^a}$ are the Christoffel symbols with respect to $m$ defined by
\begin{equation} \label{defChristoffel}
\overset{m}{\Gamma_{bc}^a}=\frac 1 2 (m^{-1})^{ad} \left(\frac{\partial m_{bd}}{\partial y^c} + \frac{\partial m_{cd}}{\partial y^b}-\frac{\partial m_{bc}}{\partial y^d}  \right) \, ,
\end{equation}
as well as analogous expressions for $\overset{\tilde m}{\nabla}_b \tilde S^{ab}$ and $\overset{\tilde m}{\Gamma_{bc}^a}$.
From (\ref{scalemetric}) and (\ref{scalestress}) we then obtain
\begin{equation} \label{derscalemandS}
\frac{\partial \tilde m_{bc}}{\partial y^d}(y)=\frac 1 l \frac{\partial m_{bc}}{\partial y^d} \left(\frac y l \right)  \qquad  \textrm{and} \qquad \frac{\partial \tilde S^{ab}}{\partial y^b}(y)=\frac 1 l \frac{\partial S^{ab}}{\partial y^b}\left(\frac y l \right) \, ,
\end{equation}
and, consequently, from (\ref{defChristoffel}),
\begin{equation} \label{scaleChristoffel}
\overset{\tilde m}{\Gamma_{bc}^a} (y)= \frac 1 l \overset{m}{\Gamma_{bc}^a} \left(\frac y l \right) \, .
\end{equation}
We conclude, using (\ref{defcovderS}) with (\ref{derscalemandS}) and (\ref{scaleChristoffel}),
\begin{equation*}
\overset{\tilde m}{\nabla}_b \tilde S^{ab}(y)=\frac 1 l \left(\overset{m}{\nabla}_b S^{ab} \right) \left(\frac y l \right) \, .
\end{equation*}
So, if $\phi$ is a solution relative to $h$ and $\Omega$, $\tilde \phi$ is a solution relative to $\tilde h$ and $\tilde \Omega$, the tangent plane to $\tilde \Omega = l\Omega$ at the point $l y \in \partial \tilde \Omega$ being parallel to the tangent plane to $\partial \Omega$ at $y$ relative to the linear structure of $\mathbb R^n \supset \Omega$.

If $(E_A: A=1,\ldots,n)$ is an orthonormal frame field for $h$ then $(\tilde E_A: A=1,\ldots,n)$, with $\tilde E_A$ defined by
\begin{equation*}
\tilde E_A^a(y)=E_A^a\left(\frac y l\right) \, ,
\end{equation*}
is an orthonormal frame field for $\tilde h$. Also, using a formula similar to (\ref{scaleChristoffel}) with $h$ in the role of $m$, we deduce
\begin{equation*}
\tilde R^a_{bcd}(y)= l^{-2} R^a_{bcd}\left(\frac y l \right) \, .
\end{equation*}
It follows that, if $K_{\Pi}$ is the sectional curvature of $h$ corresponding to the plane $\Pi$ at $y$ and $\tilde K_{\tilde \Pi}$ is the sectional curvature of $\tilde h$ corresponding to the plane $\tilde \Pi$ at $\tilde y=S_l y=ly$, where $\tilde \Pi=dS_l(\Pi)$ ($S_l$ the scaling map), then
\begin{equation*}
\tilde K_{\tilde \Pi} = l^{-2} K_{\Pi} \, .
\end{equation*} 

\subsection{The $2$d Case}
Consider the two-dimensional case. The metric of the hyperbolic plane of curvature $-\varepsilon^2$ is given in Riemannian normal coordinates by: 
\begin{eqnarray*}
h_{ab} & = & \delta_{ab}+\varepsilon^2 f_{ab}(y) \, , \\
f_{ab}(y) & = & f(\varepsilon^2 r^2) l_{ab}(y) \, ,\\
l_{ab} (y)  & = & \delta_{ab} r^2 -y^a y^b \, ,
\end{eqnarray*}
and $f(z)$ is an entire function with $f(0)=\frac 1 3$.

We now consider the metric $\tilde h$, the components of which in the above coordinates are: $\tilde h_{ab}(y)=h_{ab}(y/l)$. We have:
\begin{eqnarray*}
\tilde h_{ab}(y) & = & \delta_{ab}+\varepsilon^2 f_{ab}\left(\frac y l \right) \, ,\\
f_{ab} \left(\frac y l \right) & = & f\left(\varepsilon^2 \frac{r^2}{l^2}\right) \frac{1}{l^2} l_{ab}(y)  \, ,
\end{eqnarray*}
therefore,
\begin{equation*}
\tilde h_{ab}(y) = \delta_{ab}+ \tilde \varepsilon^2 f \left( \tilde \varepsilon^2 r^2\right) l_{ab}(y)=\delta_{ab}+ \tilde \varepsilon^2 \tilde f_{ab}(y) \, ,
\end{equation*}
where $\tilde \varepsilon = \varepsilon / l$ and $\tilde f_{ab}(y)$ is $f_{ab}(y)$ with $\varepsilon$ replaced by $\tilde \varepsilon$.
While the curvature $K$ of $h$ is $-\varepsilon^2$, the curvature $\tilde K$ of $\tilde h$ is
\begin{equation*}
-\tilde \varepsilon^2 = -\frac{\varepsilon^2}{l^2} \, . 
\end{equation*}

Given a domain $\Omega_1$, we shall show in Part III below that there is a $\varepsilon_1>0$ such that we can solve equations (\ref{divstresszeroN}) for all $0< \varepsilon < \varepsilon_1$.
Let $\phi_{1,\varepsilon}$ be the solution corresponding to $\Omega_1$ and $\varepsilon \in (0,\varepsilon_1)$. $\Omega_1$ is a domain in the hyperbolic plane of curvature $-\varepsilon^2$. We define $\phi_{l,\varepsilon/l}$ by
\begin{equation} \label{defphiscale}
\phi_{l, \varepsilon/l}(y)= l \phi_{1,\varepsilon}\left(\frac y l \right) \, .
\end{equation}
This is the solution corresponding to the domain $\Omega_l=l\Omega_1$ in the hyperbolic plane of curvature $-\varepsilon^2/l^2$. 

Choosing then $l=\varepsilon$ we have from (\ref{defphiscale})
\begin{equation*}
\phi_{l,1}(y)= l \phi_{1,l} \left(\frac y l \right) = l \phi_{1,\varepsilon}\left(\frac y l \right) 
\end{equation*} 
a solution of the problem for the domain $\Omega_l:=l\Omega_1=\varepsilon \Omega_1$ in the hyperbolic plane of curvature $-\varepsilon^2/l^2=-1$, that is, the standard hyperbolic plane. In conclusion, once we have a solution for the domain $\Omega_1$ and curvature $-\varepsilon^2$, we automatically have a solution for the smaller (rescaled) domain $\Omega_l=\varepsilon \Omega_1$ and curvature $-1$.

Consider the stress $T^{ij}$ and its rescaled version $\tilde T^{ij}$ at the respective points in Euclidean space. From (\ref{scalestress}) and (\ref{scalephi}) we conclude:
\begin{eqnarray*}
T^{ij}\left(\phi(y)\right) & = & S^{ab}(y) \frac{\partial \phi^i(y)}{\partial y^a}\frac{\partial \phi^j(y)}{\partial y^b}  \, ,\\
\tilde T^{ij}\left(\tilde \phi(y)\right) & = & \tilde S^{ab}(y) \frac{\partial \tilde \phi^i(y)}{\partial y^a}\frac{\partial \tilde \phi^j(y)}{\partial y^b} = S^{ab}\left( \frac y l \right)  \frac{\partial \phi^i(y/l)}{\partial y^a}\frac{\partial \phi^j(y/l)}{\partial y^b} \\
& = & T^{ij}\left(\phi\left(\frac y l \right) \right) \, .
\end{eqnarray*}
The rescaled stress $\tilde T$ at the rescaled point $\tilde \phi(y)$ is thus the same as the original stress $T$ at the point $\phi(y/l)$. 

\subsection{The $3$d Case}
Let $(E_A: A=1, \ldots , 3)$ be the vectorfields (\ref{onbHG}) that satisfy the commutation relations $[E_1, E_2]=e^{\beta}E_3$, $[E_1, E_3]=[E_2,E_3]=0$ and $(\omega^A :A=1, \ldots , 3)$ the dual $1$-forms (\ref{dualbHG}), so that
\begin{equation}
h=\sum_{A=1}^3 \omega^A \otimes \omega^A \quad , \quad h_{AB}=h(E_A,E_B)=\delta_{AB} \, .
\end{equation}
We introduce Riemannian normal coordinates $(y^a : a=1,2,3)$ for $h$ at a given point in $\mathcal N$, which we take as the origin. The components of the metric $h$ in these coordinates are of the form:
\begin{equation} \label{RNKh}
h_{ab}(y)=\delta_{ab}+\epsilon_{ab}(y) \quad , \quad \epsilon_{ab}(y) = O(\abs{y}^2) \, .
\end{equation}
Let $id$ be the identity map defined in Section \ref{HGahs} and $m$ the pullback by $id$ of the Euclidean metric $\delta_{ij} dx^i \otimes dx^j$. Then the components $m_{ab}$ of $m$ in the coordinates $(y^a: a=1,2,3)$ are simply $m_{ab}=\delta_{ab}$ and the corresponding inner product on $\mathcal V$, which depends on $y$, is given by:
\begin{eqnarray*}
\gamma_{AB}(y) & := & \gamma(y)(E_A,E_B) \\
 & = & m_{ab}(y)E_A^a(y) E_B^b(y) \\
 & = & \delta_{ab} E_A^a(y) E_B^b(y) \\
 & = & \left(h_{ab}(y)-\epsilon_{ab}(y)\right)E_A^a(y) E_B^b(y) \\
 & = & \delta_{AB}-\epsilon_{AB}(y) \, ,
\end{eqnarray*}
where $\epsilon_{AB}(y)=\epsilon_{ab}(y) E_A^a(y) E_B^b(y)= O(\abs{y}^2)$. 

We now dilate the Heisenberg group metric $h$ by the factor $l>1$, i.e.~we set:
\begin{equation*} 
\tilde h = l^2 h \quad , \quad \tilde h(E_A, E_B)= l^2 h(E_A, E_B)=l^2 \delta_{AB} \, .
\end{equation*}
Define now $\tilde E_A=l^{-1} E_A$, so
\begin{equation*}
\tilde h(\tilde E_A, \tilde E_B)= \delta_{AB} \, ,
\end{equation*}
that is $(\tilde E_A: A=1,2,3)$ is an orthonormal frame field relative to $\tilde h$. We denote by $ \overset{\tilde\circ}{\gamma}$ the corresponding inner product on $\mathcal V$, i.e.~$  \overset{\tilde\circ}{\gamma}(\tilde E_A, \tilde E_B)=\delta_{AB}$. Remark that the commutation relations of the frame field $(\tilde E_A : A=1,2,3)$ read
\begin{equation*}
[\tilde E_1, \tilde E_2]=l^{-1}\tilde E_3 \quad , \quad [\tilde E_1, \tilde E_3]=[\tilde E_2, \tilde E_3]=0 \, .
\end{equation*}
Now:
\begin{equation*}
\tilde h = l^2 h = l^2 h_{ab}(y) dy^a \otimes dy^b \, .
\end{equation*}
Set $\tilde y^a= l y^a$, then $\tilde h = \tilde h_{ab}(\tilde y) d\tilde y^a \otimes \tilde y^b$ and hence
\begin{equation} \label{scaleHGmet}
\tilde h_{ab}(\tilde y) = h_{ab}\left(\frac{\tilde y}{l}\right) \, .
\end{equation}
Consider the geodesic ray through the origin of the coordinate system $y^a$:
\begin{equation*}
y^a = \lambda^a t \quad , \quad \tilde y^a = \tilde \lambda^a t \quad \textrm{where} \quad \tilde \lambda^a= l \lambda^a  \, . 
\end{equation*}
For the Christoffel symbols of the metric $h$ with respect to the coordinates $y^a$ we have:
\begin{equation*}
\Gamma_{bc}^a(\lambda t)\lambda^b \lambda^c = 0 \, .
\end{equation*}
But from (\ref{scaleChristoffel}),
\begin{equation*}
\tilde \Gamma_{bc}^a(\tilde y)= \frac 1 l \Gamma_{bc}^a\left(\frac{\tilde y}{l}\right) \, , 
\end{equation*}
hence:
\begin{equation}
\tilde \Gamma_{bc}^a(\tilde \lambda t) \tilde \lambda^b \tilde \lambda^c= \frac 1 l \Gamma_{bc}^a(\lambda t) l^2 \lambda^b \lambda^c=0 \, .
\end{equation}
Thus $(\tilde y^a: a=1,2,3)$ are Riemannian normal coordinates for $\tilde h$.

We remark that the mapping $id$ defined in Section \ref{HGahs} depends on the choice of local coordinates in $\mathcal N$. Let us denote by $\tilde{id}$ the mapping associated to the coordinates $(\tilde y^a : a=1,2,3)$, reserving the notation $id$ for the mapping associated to the original coordinates $(y^a : a=1,2,3)$. Let $\tilde m$ be the pullback of the Euclidean metric $\delta_{ij} dx^i \otimes dx^j$ by $\tilde{id}$. Then the components $\tilde m_{ab}$ of $\tilde m$ in the coordinates $(\tilde y^a : a=1,2,3)$ are again simply $\tilde m_{ab}=\delta_{ab}$.

Now, by (\ref{RNKh}) and (\ref{scaleHGmet}),
\begin{equation*}
\tilde h_{ab}(\tilde y) = \delta_{ab} + \tilde \epsilon_{ab} (\tilde y) \, ,
\end{equation*}
where 
\begin{equation*}
\tilde \epsilon_{ab}(\tilde y) = \epsilon_{ab}\left(\frac{\tilde y}{l}\right) \, .
\end{equation*}
Therefore:
\begin{equation*}
\tilde \epsilon_{ab}(\tilde y) = O\left(l^{-2} |\tilde y|^2\right) \, ,
\end{equation*}
and we obtain, in analogy with the above:
\begin{eqnarray*}
\tilde \gamma_{AB}(\tilde y) & := & \tilde \gamma(\tilde y)(\tilde E_A,\tilde E_B) \\
 & = & \tilde m_{ab}(\tilde y)\tilde E_A^a(\tilde y) \tilde E_B^b(\tilde y) \\
 & = & \delta_{ab} \tilde E_A^a(\tilde y) \tilde E_B^b(\tilde y) \\
 & = & \left(\tilde h_{ab}(\tilde y)-\tilde \epsilon_{ab}(\tilde y)\right)\tilde E_A^a(\tilde y) \tilde E_B^b(\tilde y) \\
 & = & \delta_{AB}-\tilde \epsilon_{AB}(\tilde y) \, ,
\end{eqnarray*}
where $\tilde \epsilon_{AB}(\tilde y)=\tilde \epsilon_{ab}(\tilde y) \tilde E_A^a(\tilde y) \tilde E_B^b(\tilde y)= O(l^{-2}\abs{\tilde y}^2)$. Here $\tilde E_A^a$ are the components of the vectorfield $\tilde E_A$ in the coordinate system $(\tilde y^a: a=1,2,3)$.

Let now $\tilde \Omega$ be a fixed domain in the $\tilde y$ coordinates containing the origin. Let $\tilde \phi$ be a solution of the boundary value problem
\begin{equation} \label{tildebvp}
\left\{ \begin{array}{rll} \overset{\tilde m}{\nabla}_b \tilde S^{ab} & = 0 & :\quad \textrm{in} \quad \tilde \Omega \, , \\  \tilde S^{ab} \tilde M_b & = 0 & : \quad \textrm{on} \quad \partial \tilde \Omega \, , \end{array} \right.
\end{equation}
such that $\tilde \phi$ takes the $\tilde y$ coordinate origin in $\mathcal N$ to the $x$ coordinate origin in $E^3$.
In (\ref{tildebvp}) $\tilde S^{ab}= \tilde \pi^{AB} \tilde E_A^a \tilde E_B^b$ and, by definition, 
\begin{equation*}
\sqrt{\frac{\det \tilde \gamma}{\det \overset{\tilde \circ}{\gamma}}}\tilde \pi^{AB}= -2\frac{\partial \tilde e}{\partial \tilde \gamma_{AB}} \, .
\end{equation*}
Since $\tilde h=\tilde h_{ab}(\tilde y) d\tilde y^a \otimes d\tilde y^b$ and $y^a=l^{-1} \tilde y^a$ we have
\begin{equation*}
h=l^{-2} \tilde h = l^{-2}\tilde h_{ab}(\tilde y) d\tilde y^a \otimes d\tilde y^b=\tilde h_{ab}(l y) dy^a \otimes d y^b \, ,
\end{equation*}
hence $h_{ab}(y)=\tilde h_{ab}(ly)$.
We now consider the rescaled (smaller) domain $\Omega = \{ y= l^{-1}\tilde y , \tilde y \in \tilde \Omega \}$ and define the mapping 
\begin{equation*}
\phi(y)=l^{-1} \tilde \phi(ly) \qquad : \forall y \in \Omega \, .
\end{equation*}
The pullback by $\phi$ of the Euclidean metric is $m$, the components of which in the $y^a$ coordinates are
\begin{equation*}
m_{ab}(y)=\delta_{ij} \frac{\partial \phi^i}{\partial y^a}(y) \frac{\partial \phi^j}{\partial y^b}(y) \, .
\end{equation*}
Since
\begin{equation*}
\frac{\partial \phi^i}{\partial y^a}(y)= \frac{\partial \tilde \phi^i}{\partial \tilde y^a}(\tilde y) \, ,
\end{equation*}
we have $m_{ab}(y)=\tilde m_{ab}(\tilde y)$.

Hence, the corresponding inner product on $\mathcal V$, $\gamma_{AB}(y)=\gamma(y)(E_A,E_B)$ in terms of the original basis $(E_A: A=1,2,3)$, is:
\begin{equation*}
\gamma_{AB}(y)=m_{ab}(y) E_A^a(y) E_B^b(y)= \tilde m_{ab}(\tilde y) \tilde E_A^a(\tilde y) \tilde E_B^b(\tilde y)=\tilde \gamma_{AB}(\tilde y) \, .
\end{equation*}
For, 
\begin{equation*}
\tilde E_A^a(\tilde y) \frac{\partial}{\partial \tilde y^a}=l^{-1}E_A^a(y)\frac{\partial}{\partial y^a}  \, ,
\end{equation*}
hence $\tilde E_A^a(\tilde y)=E_A^a(y)$. 
For the stress tensors in both coordinate systems we have $\pi^{AB}(\gamma(y))=\tilde \pi^{AB}(\tilde \gamma(\tilde y))$ because intrinsically $\tilde e(\tilde \gamma)=e(\gamma)$. Therefore:
\begin{equation*}
\tilde S^{ab}(\tilde y)=\tilde \pi^{AB}(\tilde \gamma(\tilde y)) \tilde E_A^a(\tilde y) \tilde E_B^b(\tilde y)= \pi^{AB}(\gamma(y)) E_A^a(y) E_B^b(y)=S^{ab}(y) \, ,
\end{equation*}
which shows that the stress is scaling invariant, as expected. 

\begin{remark}
The energy per unit mass $e$ is a function of the configuration $\gamma$, an inner product on $\mathcal V$. However, when representing $e$ as a function of the components $\gamma_{AB}$ of $\gamma$ in a basis $(E_A: A=1,\ldots,n)$, which is orthonormal relative to $\stackrel{\circ}{\gamma}$, $e$ in this representation depends indirectly on $\stackrel{\circ}{\gamma}$.
\end{remark}

Consider the equations in (\ref{tildebvp}). Starting from $\tilde m_{ab}(\tilde y)=m_{ab}(y)$,
\begin{equation*}
\frac{\partial \tilde m_{ab}}{\partial \tilde y^c}(\tilde y)=l^{-1}\frac{\partial \tilde m_{ab}}{\partial y^c}(\tilde y) = l^{-1} \frac{\partial m_{ab}}{\partial y^c}(y) \, ,
\end{equation*}
the Christoffel symbols $\Gamma_{ab}^c$ transform accordingly
\begin{equation} \label{scalechristoffelH}
\overset{\tilde m}{\Gamma _{ab}^c} (\tilde y)= l^{-1} \overset{m}{\Gamma_{ab}^c}(y) \, .
\end{equation}
Since we have 
\begin{equation} \label{scalederSH}
\frac{\partial \tilde S^{ab}}{\partial \tilde y^c}(\tilde y)= l^{-1} \frac{\partial S^{ab}}{\partial y^c}(y) \, ,
\end{equation}
we finally obtain, using (\ref{scalechristoffelH}), (\ref{scalederSH}),
\begin{equation*}
\left(\overset{\tilde m}{\nabla}_b \tilde S^{ab}\right)(\tilde y)= \frac{\partial \tilde S^{ab}}{\partial \tilde y^b}(\tilde y)+ \overset{\tilde m}{\Gamma_{bc}^a}(\tilde y)\tilde S^{bc}(\tilde y) + \overset{\tilde m}{\Gamma_{ac}^b}(\tilde y) \tilde S^{ac}(\tilde y)=l^{-1} \left(\overset{m}{\nabla}_b  S^{ab} \right)(y) \, .
\end{equation*}
Therefore, once we have a solution for the equation in the $\tilde y$ coordinates, we immediately obtain the solution for the original equation and the boundary condition is also satisfied since $\tilde M_a(\tilde y)=M_a(y)$.

\newpage


\part{The Analysis of Equilibrium Configurations} \label{analysises}
\setcounter{section}{0}


\section{Stress Tensor}
 We will now show that the stress tensor $S$ on the material manifold $\mathcal N$ is given by
\begin{equation} \label{stressonN}
\sqrt{\frac{\det m}{\det \stackrel{\circ}{n}}} S^{ab}=-2 \frac{\partial e}{\partial m_{ab}} \, .
\end{equation}
Recall first the definition (\ref{defthermstress}) of the thermodynamic stress $\pi$ on the crystalline structure $\mathcal V$ 
\begin{equation} \label{defstresstilde}
\sqrt{\frac{\det \gamma}{\det \stackrel{\circ}{\gamma}}} \pi^{AB} = -2 \frac{\partial e}{\partial \gamma _{AB}} \, ,
\end{equation}
where we have made use of the definition of the volume $V(\gamma)=\sqrt{\frac{\det \gamma}{\det \stackrel{\circ}{\gamma}}}$ in the frame $(E_1,\ldots,E_n)$ satisfying $\omega(E_1,\ldots,E_n)=1$, $\stackrel{\circ}{\gamma}_{AB}=\delta_{AB}$ (so in fact $\det \stackrel{\circ}{\gamma}=1$).

Now, from (\ref{gammapbm}) we have
\begin{equation*}
\frac{\partial e}{\partial m_{ab}}= \frac{\partial e}{\partial \gamma_{AB}}\frac{\partial \gamma_{AB}}{\partial m_{ab}} = \frac{\partial e}{\partial \gamma_{AB}} E_A^a E_B^b \, ,
\end{equation*}
hence, with $E(y)=\omega(y)^{-1}$,
\begin{equation} \label{deregammaderem}
\frac{\partial e}{\partial \gamma_{AB}}=\frac{\partial e}{\partial m_{ab}} \omega_a^A \omega_b^B \, ,
\end{equation}
and finally, using (\ref{defSpfpi}), (\ref{defstresstilde}), (\ref{deregammaderem}) and the equality
\begin{equation} \label{detratio}
\frac{\det m}{\det \stackrel{\circ}{n}}=\frac{\det \gamma}{\det \stackrel{\circ}{\gamma}} \, ,
\end{equation}
we obtain:
\begin{equation*}
\sqrt{\frac{\det m}{\det \stackrel{\circ}{n}}} S^{ab} \omega_a^A \omega_b^B = \sqrt{\frac{\det \gamma}{\det \stackrel{\circ}{\gamma}}} \pi^{AB} = -2 \frac{\partial e}{\partial \gamma _{AB}}= -2 \frac{\partial e}{\partial m_{ab}} \omega_a^A \omega_b^B \, ,
\end{equation*}
which directly implies (\ref{stressonN}). The equality (\ref{detratio}) is a special case of the following proposition which applies to the isotropic case.
{\proposition \label{eigenvalues} The eigenvalues $\lambda_1, \ldots, \lambda_n$ of $\gamma$ with respect to $\stackrel{\circ}{\gamma}$ coincide with the eigenvalues of $m$ with respect to $\stackrel{\circ}{n}$.}

\begin{proof}
We define $A(y) \in \mathcal L(\mathcal V, \mathcal V)$ by
\begin{equation*}
\stackrel{\circ}{\gamma}\left(A(y)Y_1,Y_2\right) = \gamma(y) (Y_1,Y_2) \quad : \, \forall Y_1, Y_2 \in \mathcal V \, .
\end{equation*}
Hence $A(y)E_B=A_B^A(y)E_A$, where 
\begin{equation*}
A_B^A(y) = (\stackrel{\circ}{\gamma}^{-1})^{AC}\gamma_{BC}(y) \, .
\end{equation*}
Thus $\lambda_1, \ldots, \lambda_n$ are the eigenvalues of $A$.

Similarly, we define $B(y) \in \mathcal L(T_y \mathcal N, T_y \mathcal N)$ by
\begin{equation*}
\stackrel{\circ}{n}\left(B(y)Y_{1,y},Y_{2,y}\right) = m(y) (Y_{1,y},Y_{2,y}) \quad : \, \forall Y_{1,y}, Y_{2,y} \in T_y\mathcal N \, .
\end{equation*}
and using $B(y)\left.\frac{\partial}{\partial y^a}\right|_y=B_a^b(y)\left.\frac{\partial}{\partial y^b}\right|_y$, we want to express $A(y)$ in terms of $B(y)$.

Since the evaluation map $\epsilon_y$ is an isomorphism from $\mathcal V$ to $T_y \mathcal N$ for each $y \in \mathcal N$, we have
\begin{equation*}
A(y)= \epsilon_y^{-1} \circ B(y) \circ \epsilon_y \, .
\end{equation*}
Hence, using $E_B(y)=E_B^b(y)\left.\frac{\partial}{\partial y^b}\right|_y$,
\begin{eqnarray*}
A_B^A(y) E_A & = &  A(y)E_B = \epsilon_y^{-1} \left( B(y) E_B(y) \right) \\
 & = & \epsilon_y^{-1} \left(E_B^b(y) B_b^a(y) \left.\frac{\partial}{\partial y^a}\right|_y \right) \\
 & = &  E_B^b(y) B_b^a(y) \omega_a^A(y) E_A \, ,
\end{eqnarray*}
we conclude
\begin{equation*}
A(y)=E(y)B(y)\omega(y)=\omega(y)^{-1}B(y)\omega(y) \quad , \quad E(y)=\omega(y)^{-1} \, ,
\end{equation*}
i.e.~the linear mappings $A(y)$ and $B(y)$ are conjugate and therefore $\lambda_1, \ldots, \lambda_n$ are also the eigenvalues of $B(y)$. \end{proof} 

Let us now calculate the stress tensor $S^{ab}$ corresponding to the toy energy (\ref{toynrg}). Here, we denote $\stackrel{\circ}{n}$ by $h$, the metric of the hyperbolic plane that is the metric of the material manifold for a uniform distribution of elementary edge dislocations. Recall that 
\begin{eqnarray*}
e = \frac 1 2 \left( (\lambda_1-1)^2+(\lambda_2-1)^2 \right) & = & \frac 1 2 \left( \lambda_1^2 + \lambda_2^2 -2 (\lambda_1+\lambda_2)+2\right) \\
& = & \frac 1 2 \textrm{tr}_h(m^2)-\textrm{tr}_h m + 1 \\
& = & \frac 1 2 \left(h^{-1}\right)^{ac}\left(h^{-1}\right)^{bd}m_{ab}m_{cd}-\left(h^{-1}\right)^{ab}m_{ab}+1 \, .
\end{eqnarray*} 
Then:
\begin{equation}\label{stressedgeup}
\sqrt{\frac{\det m}{\det h}} S^{ab}=-2 \frac{\partial e}{\partial m_{ab}}=-2\left(\left(h^{-1}\right)^{ac}\left(h^{-1}\right)^{bd}m_{cd}-\left(h^{-1}\right)^{ab}\right) \, ,
\end{equation}
or, 
\begin{equation}\label{stressedge}
 S^{ab}=2 \sqrt{\frac{\det h}{\det m}}\left(h^{-1}\right)^{ac}\left(h^{-1}\right)^{bd}\left(h_{cd}-m_{cd}\right) \, .
\end{equation}
The physical interpretation of (\ref{stressedge}) is the following:
\begin{itemize}
\item[{\bf i)}] If $m$ is smaller than $h$ then the stress is positive,
\item[{\bf ii)}] if $m$ is larger than $h$ then the stress is negative.
\end{itemize}
Recall that a quadratic form $q=h-m$ on $T_y \mathcal N$ is said to be positive (negative) if, for all $v \in T_y\mathcal N$,
\begin{equation*}
q(v,v) > 0 \, (<0) \quad : \, \forall v \neq 0 \, .
\end{equation*}

\section{Setup and Method in $2$d}\label{setupPDE2d}
As was shown in Part I, Section \ref{edag}, the material manifold $\mathcal N$ for the case of a uniform distribution of edge dislocations in two dimensions is given by the affine group, and a left-invariant metric on $\mathcal N$ gives $\mathcal N$ the structure of the hyperbolic plane $H_{\varepsilon}$ of curvature $-\varepsilon^2$.

To solve the problem in this case, we fix an origin in $H_{\varepsilon}$ and set up Riemannian normal coordinates $(y^a: a=1,2)$ as in Section \ref{HGahs} of Part II. Let $\Omega$ be any smooth bounded domain in these coordinates, containing the origin. Note that as $\varepsilon \to 0$, $\left.H_{\varepsilon}\right|_{\Omega}$ tends to $\left.H_0\right|_{\Omega}$, where $H_0=E$ is the Euclidean plane. We also choose an origin and set up rectangular coordinates $(x^i:i=1,2)$ in $E$. An identity mapping 
\begin{equation} \label{idmap}
\begin{array}{rcl}
id: H_{\varepsilon} & \to & E \\
(y^1,y^2) & \mapsto & (x^1,x^2)=(y^1,y^2)
\end{array}
\end{equation}
is then defined as in Section \ref{HGahs} of Part II.

We now restrict the allowed mappings $\phi: \Omega \subset H_{\varepsilon} \to E$ by the following two requirements. First, $\phi$ should map the origin in $H_{\varepsilon}$ into the origin in $E$. Second, $d\phi(0)$ should map the vector $\left.\frac{\partial}{\partial y^1}\right|_0$ into a vector of the form $\lambda \left.\frac{\partial}{\partial x^1}\right|_0$ for some $\lambda > 0$. By virtue of this restriction, the identity mapping (\ref{idmap}) (restricted to $\Omega$) is for $\varepsilon = 0$ the unique minimizer of our toy energy $e$ from (\ref{toynrg}).

{\remark The restriction is needed to ensure uniqueness. Otherwise, composition on the left with a rigid motion of $E$ gives another minimizer. The appropriate restriction in the three dimensional case will be stated below in Section \ref{uniqueness}. Analogously, it can be formulated in any number of space dimensions. Thus uniqueness is ensured in general. The argument of Part I, Section \ref{HyperbolicExpansion} applies with the hyperbolic plane $H_{\varepsilon}$ and the Euclidean plane replaced by the $n$-dimensional hyperbolic space $H_{\varepsilon}^n$ and the $n$-dimensional Euclidean space. This is as long as the toy energy (\ref{toynrg}) is considered.}

\vspace{2mm}

From (\ref{divstresszeroN}) and (\ref{bdryc}) the system of partial differential equations and the corresponding boundary conditions for the static problem of a uniform distribution of edge dislocations in $n=2$ dimensions is of the form 
\begin{equation*}
F_{\varepsilon}\left[ \phi \right]=0 \, ,
\end{equation*}
where
\begin{equation} \label{Fepspb}
F_{\varepsilon}\left[ \phi \right]=\left( \begin{array}{c} \overset{m}{\nabla}_b S^{ab} \\ S^{ab}M_b \end{array} \right) \, .
\end{equation}
We linearize the equations at the identity mapping, which is a solution for $\varepsilon=0$. First, we solve the linearized problem using the theorem of Lax-Milgram (as in \cite{ABS}). An iteration will then show that there exists a solution to the nonlinear problem for sufficiently small $\varepsilon$, and, therefore, by the scaling argument of Section \ref{scaling} of Part II, that there is a mapping from a rescaled domain $\tilde \Omega$ in the standard hyperbolic plane $H_1=H$ of curvature $-1$ to the Euclidean plane, $\phi: \tilde \Omega \subset H \to E$, satisfying the conditions of the problem.

We set $\phi = id + \psi$, where $\psi$ is a small deviation from the identity mapping. We have:
\begin{equation*}
F_0\left[ id \right]=0 \, .
\end{equation*}
In a neighborhood of the identity $F_{\varepsilon}$ is of the form
\begin{equation*}
F_{\varepsilon}\left[ \phi \right]=F_{\varepsilon}\left[id\right] + D_{id} F_{\varepsilon} \cdot \psi + N_{\varepsilon}[\psi] \, ,
\end{equation*}
where $N_{\varepsilon}[\psi]$ is to leading order quadratic in $\psi$. We denote by $L_{\varepsilon}$ the linearized operator $D_{id}F_{\varepsilon}$. Then $F_{\varepsilon}\left[ \phi \right]=0$ reads
\begin{equation} \label{linearizationF}
L_{\varepsilon} \cdot \psi = -F_{\varepsilon}\left[id\right] -N_{\varepsilon}[\psi] \, .
\end{equation}
We will solve this by an iteration starting at $\psi_0=0$. In the first step of the iteration we have the linearized equations
\begin{equation} \label{stepone}
L_{\varepsilon} \cdot \psi_1 = -F_{\varepsilon}\left[ id \right] \, .
\end{equation}
The term $-F_{\varepsilon}\left[id\right]$ in (\ref{linearizationF}), (\ref{stepone}) can be interpreted as a source term.

A possible approach to the problem is to study the iteration
\begin{equation} \label{firstit}
L_{\varepsilon} \cdot \psi_{n+1} = -F_{\varepsilon}\left[ id \right]-N_{\varepsilon}[\psi_n]
\end{equation}
and show that $\psi_n$ converges to a solution $\psi=\phi-id$ of (\ref{linearizationF}), provided that we choose $\varepsilon$ appropriately small. However, we follow a different approach.

Adding $L_0 \cdot \psi$ on both sides of (\ref{linearizationF}) yields
\begin{equation*}
L_0 \cdot \psi = -\left(L_{\varepsilon}-L_0\right)\cdot \psi -F_{\varepsilon}[id]-N_{\varepsilon}[\psi] \, .
\end{equation*} 
What we actually do is to consider the iteration
\begin{equation}\label{iteration}
L_0 \cdot \psi_{n+1} = - \left(L_{\varepsilon} - L_0\right)\cdot \psi_n - F_{\varepsilon}[id] - N_{\varepsilon}[\psi_n] \, .
\end{equation}
Note that while both (\ref{firstit}), (\ref{iteration}) are linear in the next iterate $\psi_{n+1}$, in (\ref{firstit}) the operator $L_{\varepsilon}$ which refers to $H_{\varepsilon}$ acts on $\psi_{n+1}$ whereas in (\ref{iteration}) the operator which refers to $H_0=E$ acts on $\psi_{n+1}$.

In (\ref{iteration}), $L_{\varepsilon}-L_0$ is a pair of linear operators, a second order operator in $\Omega$ and a first order operator on $\partial \Omega$. The coefficients of these operators are of order $\varepsilon^2$, which we may write symbolically in the form:
\begin{equation}\label{linit}
L_{\varepsilon}-L_0 \sim \partial \left(h - \delta\right) = \varepsilon^2 \partial f \, ,
\end{equation}
since, from (\ref{hypmetexpeps}), $h-\delta = \varepsilon^2 f$.

The iteration (\ref{iteration}) starts also with $\psi_0=0$. Then, setting $n=0$ in (\ref{iteration}), we have: 
\begin{equation} \label{step0it}
L_0 \cdot \psi_1 = -F_{\varepsilon}[id] \, ,
\end{equation}
thus, taking into account the fact that $\det h=1+O(\varepsilon^2)$, (\ref{stressedge}) implies $S^{ab}(id)=-2\varepsilon^2 \left.f_{ab}\right|_{\varepsilon=0}+O(\varepsilon^4)$. It follows that $\psi_1$ is of order $\varepsilon^2$. This is step one of the iteration, the linear level. 

For $n \geq 1$, taking the difference between (\ref{iteration}) and the same with $n$ replaced by $n-1$, we obtain
\begin{equation*}
L_0\cdot\left(\psi_{n+1}-\psi_n\right)= -\left(L_{\varepsilon}-L_0\right)\cdot \left(\psi_n-\psi_{n-1}\right)-\left( N_{\varepsilon}[\psi_n]-N_{\varepsilon}[\psi_{n-1}]\right) \, .
\end{equation*}
In view of (\ref{linit}) and the fact that $N_{\varepsilon}[\psi]$ is to leading order quadratic in $\psi$, for sufficiently small $\varepsilon$, contraction will hold and $\psi_n$ will converge to a solution $\psi$ of (\ref{linearizationF}). 
Thus $\phi=id+\psi$ solves the problem (\ref{Fepspb}).

Consider now the equations in (\ref{Fepspb}).
To analyze the problem at the linearized level, we consider the variation $\dot m_{ab}$ of the metric $m_{ab}$ at the identity mapping $id$. Setting
\begin{equation}\label{variationid}
\phi^i= y^i+s \psi^i \, ,
\end{equation} 
and recalling that 
\begin{equation*}
m_{ab}(y)= \frac{\partial \phi^i(y)}{\partial y^a}\frac{\partial \phi^i(y)}{\partial y^b} \, ,
\end{equation*}
we obtain:
\begin{equation} \label{Deltamab}
\dot m_{ab}=\left. \frac{d}{ds}\right|_{s=0} \left[\left(\delta_a^i+s\frac{\partial \psi^i}{\partial y^a}\right)\cdot \left(\delta_b^i+s\frac{\partial \psi^i}{\partial y^b}\right)\right]=\frac{\partial \psi^i}{\partial y^a}\delta_b^i+\delta_a^i\frac{\partial\psi^i}{\partial y^b}=\frac{\partial \psi^b}{\partial y^a}+\frac{\partial\psi^a}{\partial y^b} \, .
\end{equation}
Hence, $\dot m_{ab}$ is the Lie derivative of the metric $\delta_{ab}$ on $H_{\varepsilon}$ (the pullback by $id$ of the metric $\delta_{ij}$ on $E$) with respect to the vectorfield $\psi^a \frac{\partial}{\partial y^a}$ on $H_{\varepsilon}$ (which the push-forward by $id$ takes to the vectorfield $\psi^i \frac{\partial}{\partial x^i}$ on $E$).
 
\section{Analogous Geometric Linear Problem} \label{AnalogousP}

In the work on the stability of the Minkowski space-time \cite{CK}, a linear geometric problem was studied analogous to the linear problems in (\ref{firstit}) and (\ref{iteration}).

Let $(M,g)$ be a compact Riemannian manifold with boundary $\partial M$ and $X$ a vectorfield on $M$. We set
\begin{equation} \label{stresspi}
\pi= \mathcal L_X g \quad , \quad \pi_{ij}=\nabla_i X_j + \nabla_j X_i  \quad , \quad X_i=g_{ij}X^j \, ,
\end{equation} 
the Lie derivative of the metric $g$ along $X$, a symmetric $2$-covariant tensorfield on $M$. The variation of $\pi$ with respect to $X$ reads:
\begin{equation} \label{varofpi}
\dot \pi_{ij}=\nabla_i \dot X_j +\nabla_j\dot X_i \, .
\end{equation}
We consider the problem of free minimization of the action integral
\begin{equation} \label{actionM}
A=\int_{M}\left(\frac 1 4 |\pi|_g^2 +\rho^i X_i \right)d\mu_g-\int_{\partial M} \tau^i X_i \left.d\mu_g\right|_{\partial M} \,  .
\end{equation}
Here $\rho$ is a given vectorfield on $M$ and $\tau$ is a given vectorfield along $\partial M$. In mechanical terms $\rho$ is the body force and $\tau$ the boundary force.

The first variation of (\ref{actionM}), using (\ref{varofpi}), is
\begin{equation} \label{firstvarAinh}
\dot A  = \int_{M}\left(\pi^{ij}\nabla_j\dot X_i +\rho^i \dot X_i\right) d\mu_g - \int_{\partial M} \tau^i \dot X_i \left.d\mu_g\right|_{\partial M}  \, .
\end{equation}
For variations of $X_i$ which vanish near the boundary we have
\begin{equation*} 
\dot A = -\int_M \left( \nabla_j \pi^{ij} - \rho^i \right) \dot X_i d\mu_g \, ,
\end{equation*}
and requiring $\dot A=0$ for such variations yields the Euler-Lagrange equations:
\begin{equation} \label{ELinhom}
\nabla_j \pi^{ij} = \rho^i  \quad :\textrm{in} \,\, M \, .
\end{equation}
Requiring then $\dot A=0$ for arbitrary variations:
\begin{equation*}
- \int_{\partial M} \tau^i \dot X_i \left.d\mu_g\right|_{\partial M} = \int_{\partial M}\left(\pi^{ij} N_j - \tau^i\right)\dot X_i  \left.d\mu_g\right|_{\partial M}=0 \, ,
\end{equation*}
yields the boundary conditions:
\begin{equation} \label{bdryinhom}
\pi^{ij}N_j = \tau^i  \quad :\textrm{on} \,\, \partial M \, .
\end{equation} 
The equations (\ref{ELinhom}), together with the boundary conditions (\ref{bdryinhom}), correspond to the linearized boundary value problem (\ref{linit}) corresponding to a crystalline solid with a uniform distribution of dislocations in equilibrium if we identify $(\Omega, \dot m, id^*\delta)$ with $(M,\pi,g)$, $\delta$ being the Euclidean metric.

We proceed in showing self adjointness of the above operators in the following sense. Let $Y$ be a vectorfield on $M$, $\sigma=\mathcal L_Y g$, i.e.~ 
\begin{equation} \label{defsigmacomp}
\sigma_{ij}=\nabla_i Y_j+\nabla_j Y_i=\sigma_{ji} \quad , \quad Y_i=g_{ij}Y^{j} \, .
\end{equation}
Then we have by repeated partial integration using (\ref{stresspi}),
\begin{equation*}
\left<Y, \nabla \cdot \pi \right>_{L^2(M)}-\left<Y, \pi \cdot N\right>_{L^2(\partial M)}=
\left<X, \nabla \cdot \sigma \right>_{L^2(M)}-\left<X, \sigma \cdot N\right>_{L^2(\partial M)} \, .
\end{equation*}
Suppose now that $Y$ is a Killing field, i.e.~$\sigma=\mathcal L_Y g=0$.
Then we have by (\ref{ELinhom}), (\ref{bdryinhom}) and (\ref{defsigmacomp})
\begin{equation*}
\int_M Y_i \rho^i  =  \int_M Y_i \nabla_j \pi^{ij} = -\frac 1 2 \int_M \sigma_{ij} \pi^{ij} + \int_{\partial M} Y_i \pi^{ij} N_j =  \int_{\partial M} Y_i \tau^i \, ,
\end{equation*}
since $\sigma_{ij}=0$ on $M$. Thus the integrability condition reads:
\begin{equation} \label{intcond}
\int_M Y_i \rho^i = \int_{\partial M} Y_i \tau^i \, .
\end{equation}
This condition guarantees the existence of a solution for the boundary value problem (\ref{ELinhom}), (\ref{bdryinhom}) by the theorem of Lax-Milgram.

\subsection{Uniqueness of the Solution} \label{uniqueness}
In fact, the solution is unique up to an additive Killing field. For, if we take two solutions $X_1$ and $X_2$ of (\ref{ELinhom}) with (\ref{bdryinhom}), their difference $X=X_1-X_2$ satisfies the homogeneous equation of (\ref{ELinhom}), i.e.~$\rho = 0$ with zero boundary conditions, (\ref{bdryinhom}) for $\tau=0$.
Therefore, we have, setting $\pi=\mathcal L_X g$, $\nabla_j\left(\pi^{ij}X_i\right)=\pi^{ij} \nabla_j X_i$,
\begin{equation*}
A= \frac 1 4 \int_M |\pi|^2 =\frac 1 2 \int_M \pi^{ij}\nabla_j X_i=\frac 1 2 \int_M \nabla_j \left(\pi^{ij}X_i \right)  \, .
\end{equation*}
Using Gauss's theorem we obtain:
\begin{equation*}
A=\frac 1 2 \int_M \nabla_j \left(\pi^{ij}X_i \right) =\frac 1 2 \int_{\partial M} \underbrace{\left(\pi^{ij}N_j \right)}_{=0} X_i =0 \, .
\end{equation*}
It follows that $\pi=0$, and thus $X=X_1-X_2$ is a Killing field, i.e.~the solutions $X_1, X_2$ only differ by a Killing field.

For our problem, where $(M,g)$ is isometric to a domain in the Euclidean plane, recalling the restriction
\begin{eqnarray*}
\phi(0) & = & 0 \, , \\
d\phi(0)\cdot \left. \frac{\partial}{\partial y^1} \right|_0 & = & \lambda  \left. \frac{\partial}{\partial x^1} \right|_0 \, ,
\end{eqnarray*}
we obtain, setting
\begin{equation*}
\phi^i(x)=y^i+s X^i(y) \, ,
\end{equation*}
the conditions
\begin{eqnarray*}
s X^i(0) & = & 0 \, , \\
\delta_1^i + s \frac{\partial X^i}{\partial y^1}(0) & = & \lambda(s) \delta_1^i  \, .
\end{eqnarray*}
Taking then the derivative with respect to $s$ at $s=0$ yields the linearized conditions:
\begin{eqnarray}
X^i(0) & = & 0 \, ,  \label{firstcond} \\
\frac{\partial X^i}{\partial y^1}(0) & = & \mu \delta_1^i \, , \label{secondcond}
\end{eqnarray}
where $\mu=\dot \lambda(0)$. The second of the above conditions is
\begin{equation*}
\frac{\partial X^2}{\partial y^1}(0)=0 \, .
\end{equation*} 
Substituting the general form of a Killing field,
\begin{equation*}
X^i=\alpha^i_j y^j + \beta^i \quad , \quad \alpha^i_j=-\alpha^j_i \quad , 
\end{equation*}
it follows from the first condition (\ref{firstcond}) that $\beta^i=0$, and from the second condition (\ref{secondcond}) that $\alpha_1^2=0$, hence $\alpha^i_j=0$, i.e.~$X=0$. So the solution of the linearized problem is in fact unique.

We remark that in three dimensions, one must add, for uniqueness, the condition that 
\begin{equation*}
d\phi(0)\cdot \left. \frac{\partial}{\partial y^2} \right|_0 \, 
\end{equation*}
is a vector at the origin contained in the plane spanned by $\left. \frac{\partial}{\partial x^1} \right|_0 $ and $\left. \frac{\partial}{\partial x^2} \right|_0$. This can always be arranged by a suitable rotation in three-dimensional Euclidean space. Thus
\begin{equation*}
d\phi(0)\cdot \left. \frac{\partial}{\partial y^2} \right|_0 = \lambda_1  \left. \frac{\partial}{\partial x^1} \right|_0 + \lambda_2  \left. \frac{\partial}{\partial x^2} \right|_0 \, .
\end{equation*}
At the linearized level, this additional condition gives:
\begin{equation} \label{addcond}
\frac{\partial X^i}{\partial y^2}(0) = \mu_1 \delta_1^i + \mu_2 \delta_2^i  \, ,
\end{equation}
where $\mu_1=\dot \lambda_1(0)$, $\mu_2=\dot \lambda_2(0)$. Setting $i=3$ in (\ref{addcond}), we have:
\begin{equation*}
\frac{\partial X^3}{\partial y^2}(0)=0 \, ,
\end{equation*}
while the conditions from (\ref{secondcond}) read
\begin{equation*}
\frac{\partial X^2}{\partial y^1}(0)=\frac{\partial X^3}{\partial y^1}(0)=0 \, .
\end{equation*}
Hence, if $X$ is a Killing field,
\begin{equation*}
\alpha_1^2=\alpha_1^3=0 \quad \textrm{and} \quad \alpha_2^3=0 \, ,
\end{equation*} 
or, taking into account that also $\beta^i=0$ ($i=1,\ldots,3$), we conclude that $X=0$.

\section{The Linear Problem}\label{linearcase}
The unknown of the problem being the mapping $\phi: \Omega \to E$, the pullback metric $m=\phi^* \delta$ depends on $\phi$ according to:
\begin{equation*}
m_{ab}[\phi](y)=\sum_i \frac{\partial \phi^i(y)}{\partial y^a}\frac{\partial \phi^i(y)}{\partial y^b} \, .
\end{equation*}
Since $m_{ab}(id)=\delta_{ab}$, and $\phi= id + \psi$, i.e.~$\phi^i=y^i+\psi^i(y)$, we have:
\begin{equation*}
m_{ab}=\delta_{ab}+\dot m_{ab}+\mu_{ab}[\psi] \quad , \, \textrm{where} \quad \dot m_{ab}=\frac{\partial \psi^b}{\partial y^a}+\frac{\partial \psi^a}{\partial y^b} 
\end{equation*}
and $\mu_{ab}[\psi]$ is quadratic in $\psi$.
It follows from (\ref{iteration}), together with (\ref{linit}), that $\psi=O(\varepsilon^2)$ at the linear level, whence $\psi^2=O(\varepsilon^4)$. Furthermore, from the expansion of the hyperbolic metric in rectangular coordinates we have 
\begin{equation*}
h_{ab}=\delta_{ab}+\varepsilon^2 l_{ab}+O(\varepsilon^4) \, ,
\end{equation*}
where $l_{ab}= \left. f_{ab} \right|_{\varepsilon = 0}$.
 
Linearizing the operator $F_{\varepsilon}$ from (\ref{Fepspb}) at the identity mapping, $L_0 \psi =D_{id}F_0\cdot \psi$, we find:
\begin{equation*}
L_0 \psi = \left\{ \begin{array}{rl} -2 \partial_b \dot m_{ab} & : \, \textrm{in} \,\, \Omega \, , \\ -2\dot m_{ab} M_b & :  \, \textrm{on} \,\, \partial \Omega \, , \end{array} \right.
\end{equation*}
with $\dot m_{ab}$ as above. Moreover, for $\phi=id$ we have, $m_{ab}=\delta_{ab}$, $\overset{m}\nabla_b =\frac{\partial}{\partial y^b}$. Hence:
\begin{equation} \label{fracstress}
\frac 1 2 S^{ab}(id)=\sqrt{\det h}\left(h^{-1}\right)^{ac}\left(h^{-1}\right)^{bd}\left(h_{cd}-m_{cd}\right)= \varepsilon^2 l_{ab} + O(\varepsilon^4) \, ,
\end{equation}
where we have made use of 
\begin{equation*}
\sqrt{\det h}=\sqrt{\det(\delta+\varepsilon^2 l)}=1+O(\varepsilon^2) \, .
\end{equation*} 
Therefore,  
\begin{equation*}
F_{\varepsilon}(id) = \left\{ \begin{array}{rll} \partial_b S^{ab}(id) \, = & 2 \varepsilon^2 \partial_b l^{ab} + O(\varepsilon^4) & : \, \textrm{in} \,\, \Omega \, , \\  S^{ab}(id)M_b \, = & 2\varepsilon^2 l^{ab}M_b + O(\varepsilon^4) & :  \, \textrm{on} \,\, \partial \Omega \, . \end{array} \right.
\end{equation*} 
Thus, dropping terms of $O(\varepsilon^4)$ in $F_{\varepsilon}(id)$ which come from the $O(\varepsilon^4)$ terms in (\ref{fracstress}), the linearized problem (\ref{step0it}) reduces to the boundary value problem
\begin{equation}\label{LinearPb}
\left\{ \begin{array}{rll} \frac{\partial}{\partial y^b} \left(\dot m_{ab} - \varepsilon^2 l_{ab} \right) & = 0 & :\quad \textrm{in} \quad \Omega \, , \\  \left(\dot m_{ab} - \varepsilon^2 l_{ab} \right) M_b & = 0 & : \quad \textrm{on} \quad \partial \Omega \, . \end{array} \right.
\end{equation}
We consider the following problem analogous to (\ref{LinearPb}):
\begin{equation} \label{anabvp}
\left\{ \begin{array}{rll} \nabla_j\left(\pi^{ij}-\sigma^{ij} \right) & = 0 & :\quad \textrm{in} \quad M \, , \\ \left(\pi^{ij}-\sigma^{ij}\right)N_j & = 0 & : \quad \textrm{on} \quad \partial M \, , \end{array} \right.
\end{equation}
where $\pi_{ij}=\left(\mathcal L_X g\right)_{ij}$, $\sigma_{ij}$ is a given symmetric $2$-covariant tensorfield on $M$, $N_j$ is a covector whose null space is the tangent plane $T_x M$ at $x\in\partial M$, and $N^i=(g^{-1})^{ij}N_j$ is the corresponding outer unit normal vector to $\partial M$.

To obtain the solution of (\ref{anabvp}), we find a vectorfield $\tilde X$ such that $\tilde \pi=\mathcal L_{\tilde X}g$ satisfies
\begin{equation} \label{bdryvalue}
\left(\tilde \pi^{ij}-\sigma^{ij}\right)N_j=0  \qquad : \quad \textrm{on} \quad \partial M \, .
\end{equation}
We define $X'=X-\tilde X$ and $\pi'=\mathcal L_{X'} g=\mathcal L_X g-\mathcal L_{\tilde X}g=\pi-\tilde \pi$, to obtain from (\ref{bdryvalue}) and the boundary conditions of (\ref{anabvp}):
\begin{equation*}
(\pi')^{ij}N_j=\left(\pi^{ij}-\tilde \pi^{ij} \right)N_j=\left(\pi^{ij}-\sigma^{ij} \right)N_j=0 \qquad : \quad \textrm{on} \quad \partial M \, .
\end{equation*}
Defining the vectorfield $\rho$ in $M$ by $\rho^i=\nabla_j \left(\sigma^{ij}-\tilde \pi^{ij}\right)$, the problem then reduces to:
\begin{equation*} 
\left\{ \begin{array}{rll} \nabla_j (\pi')^{ij} & = \rho^i & :\quad \textrm{in} \quad M \, , \\ (\pi')^{ij}N_j & = 0 & : \quad \textrm{on} \quad \partial M \, . \end{array} \right. 
\end{equation*}
This is of the same form as (\ref{anabvp}), but with zero boundary conditions. To see whether this problem has a solution, we need to check the integrability condition (\ref{intcond}) (i.e.~orthogonality to the Killing fields in the $L^2$ sense). Here, the boundary terms vanish since we have zero boundary conditions and it remains for us to show that
\begin{equation*}
\int_M \xi_i \rho^i d\mu_g =0 \, ,
\end{equation*}
for all Killing fields $\xi$. We have:
\begin{eqnarray*}
\int_M\xi_i \rho^i d\mu_g & = & \int_M \xi_i\nabla_j\left(\sigma^{ij}-\tilde \pi^{ij} \right) d\mu_g \\
 & = &  \int_{\partial M} \xi_i\left(\sigma^{ij}-\tilde \pi^{ij} \right)N_j \left. d\mu_g\right|_{\partial M}- \int_M \nabla_j\xi_i \left(\sigma^{ij}-\tilde \pi^{ij} \right) d\mu_g=0  \, ,
\end{eqnarray*}
where the first term is zero due to (\ref{bdryvalue}) (that is, the choice of $\tilde X$), and the second one vanishes by virtue of the fact that $\xi$ is a Killing field,
\begin{equation*}
\nabla_j \xi_i \left(\sigma^{ij}-\tilde \pi^{ij}\right)= \frac 1 2 \left( \nabla_i \xi_j + \nabla_j \xi_i \right) \left(\sigma^{ij}-\tilde \pi^{ij}\right)=0 \, .
\end{equation*}
By applying the Lax-Milgram theorem as in \cite{ABS}, we conclude that there is a solution $X$ to the generalized linear problem which is unique up to an additive Killing field. Thus, also the linear case of the original problem (\ref{LinearPb}), viewed as a special case of the above, has a unique solution up to an additive Euclidean Killing field.

\section{The Nonlinear Case}
We will show that the nonlinear system (\ref{Fepspb}) is solvable in $\Omega$ under a certain smallness assumption on the parameter $\varepsilon$. Let us first state the nonlinear case of the original problem $(P)$, again.
\begin{equation} \label{nonlinpb}
(P) \left\{ \begin{array}{rll} \overset{m}{\nabla}_b S^{ab} & = 0 & :\quad \textrm{in} \quad \Omega \, , \\ S^{ab}M_b & = 0 & : \quad \textrm{on} \quad \partial \Omega \, , \end{array} \right. 
\end{equation}
where $S^{ab}$ is given by (\ref{stressedge}) and $M_b$ is a covector whose null space is the tangent plane $T_y \Omega$ at $y \in \partial \Omega$. The orientation may be defined by $M_bX^a>0$, whenever $X^b$ is a vector pointing to the exterior of $\Omega$. $M^a=(m^{-1})^{ab}M_b$ is the outer normal to $\partial \Omega$. 
The strategy for solving the problem $(P)$ is the following. We set up an iteration scheme, where the first step is the linearized problem. By the result for problem (\ref{anabvp}), the linearized problem is solvable because the integrability condition is automatically satisfied. The integrability condition of the iteration can then be satisfied by applying a doping technique similar to the one in \cite{Ka}.

\subsection{Iteration} \label{iterationmethod}
We first study the iteration scheme.
For the analogous (generalized) problem $(AP)$ we have from (\ref{iteration}), (\ref{nonlinpb}) (now written in terms of the coordinates $y^a$ on $\Omega$, and $\pi$ denoting the linearized metric $\dot m$):
\begin{equation} \label{AP}
(AP) \left\{ \begin{array}{rll} \frac{\partial \pi_{n+1}^{ab}}{\partial y^b} & = \rho_n^a & :\quad \textrm{in} \quad \Omega \, , \\ \left(\pi_{n+1}^{ab}-\sigma_n^{ab}\right)M_b & = 0 & : \quad \textrm{on} \quad \partial \Omega \, , \end{array} \right.
\end{equation}
where 
\begin{equation*}
\pi^{ab}_{n+1}=\frac{\partial \psi^b_{n+1}}{\partial y^a}+\frac{\partial \psi^a_{n+1}}{\partial y^b} \, .
\end{equation*}
Now we set $\psi_{n+1}'=\psi_{n+1}-\tilde \psi_{n+1}$, where the $\tilde \psi$-part satisfies the boundary conditions, i.e.~
\begin{equation} \label{piprimesigmabdry}
\left(\tilde \pi^{ab}_{n+1}-\sigma^{ab}_n\right)M_b=0 \, ,
\end{equation}
and we have a modified problem $(AP')$ with zero boundary conditions
\begin{equation*}
(AP') \left\{ \begin{array}{rll} \frac{\partial \pi_{n+1}^{' \, ab}}{\partial y^b} & = \rho_n^a-\frac{\partial \tilde \pi_{n+1}^{ab}}{\partial y^b} & :\quad \textrm{in} \quad \Omega \, , \\ \pi_{n+1}^{' \, ab}M_b & = 0 & : \quad \textrm{on} \quad \partial \Omega  \, . \end{array} \right.
\end{equation*}
The integrability condition, which yields existence of solutions of the problem by the Lax-Milgram theorem \cite{Ev}, now reads:
\begin{equation*}
0=\int_{\Omega}\xi^a \left(\rho_n^a-\frac{\partial \tilde\pi_{n+1}^{ab}}{\partial y^b}\right)  =  \int_{\Omega}\xi^a \rho_n^a -\int_{\partial \Omega} \xi^a  \tilde \pi_{n+1}^{ab} M_b + \int_{\Omega}  \tilde\pi_{n+1}^{ab}\frac{\partial \xi^a}{\partial y^b}  \, .
\end{equation*} 
That is:
\begin{equation} \label{intcondnl} 
 \int_{\Omega}\xi^a \rho_n^a - \int_{\partial \Omega} \xi^a  \sigma_n^{ab} M_b = 0 \, ,
\end{equation}
for every Killing field $\xi$ of a background Euclidean metric on $\Omega$. This metric is $id^*\delta$, where $\delta$ is the Euclidean metric of $E$. The $y^a$ are rectangular coordinates on $\Omega$ relative to this metric. In particular, $M_b$ is a unit covector relative to this Euclidean metric and the integral $\int_{\partial \Omega}$ is taken with respect to the measure on $\partial \Omega$ corresponding to the metric (arc length if $\dim \Omega=2$) induced on $\partial \Omega$ by this Euclidean metric on $\Omega$.
In the above, we have made use of (\ref{piprimesigmabdry}) and the properties of $\xi$ as a Killing field. In particular,
\begin{equation*}
\xi^a=\alpha_b^a y^b + \beta^a \quad , \quad \alpha_b^a = -\alpha_a^b \quad  \Rightarrow \quad \frac{\partial \xi^a}{\partial y^b}=\alpha_b^a=\frac 1 2 \left(\alpha_a^b-\alpha_b^a \right) \, ,
\end{equation*} 
and thus the contraction of the symmetric tensors $\tilde \pi_{n+1}^{ab}$, respectively $\sigma_n^{ab}$, with $\frac{\partial \xi^a}{\partial y^b}$ vanishes.
In conclusion, the integrability condition at the $n+1$ step of the iteration is (\ref{intcondnl}).

\subsection{Killing Fields and Doping Technique}
In $n$ dimensional Euclidean space we have the following linearly independent Killing fields:
\begin{itemize}
\item[{\bf i)}] $n$ translations,
\item[{\bf ii)}] $\frac{n(n-1)}{2}$ rotations.
\end{itemize}
So the space of Killing fields on $E^n$ is $N$ dimensional, where
\begin{equation*}
N=n + \frac{n(n-1)}{2}=\frac{n(n+1)}{2} \, .
\end{equation*}
Let $\left(\xi_A: A=1,\ldots ,N\right)$ be a basis for the space of Killing fields. In the spirit of \cite{Ka}, we have to modify $\rho$, that is the inhomogeneity on the right-hand side of (\ref{AP}). This technique is called \emph{doping}. We replace $\rho$ by 
\begin{equation*}
  \rho' =\rho + \sum_A c_A \xi_A \, ,
\end{equation*}
and require in accordance with (\ref{intcondnl}) that
\begin{equation} \label{dopcond}
\int_{\Omega} \xi_A \cdot \rho' = \int_{\partial \Omega} \xi_A \cdot \sigma_M \quad : \quad \forall A=1, \ldots , N \, ,
\end{equation}
where $\sigma_M^a=\sigma^{ab}M_b$. Since 
\begin{equation*}
\int_{\Omega} \xi_A \cdot \xi_B = M_{AB}
\end{equation*}
is positive definite, we obtain a linear system of equations for the coefficients $c_A \, : \, A=1, \ldots , N$ as follows:
\begin{equation*}
\int_{\Omega} \xi_A \cdot \rho' =\int_{\Omega} \xi_A \cdot \rho+  \int_{\Omega} \xi_A \left(\sum_B c_B \xi_B\right) =  \int_{\partial \Omega} \xi_A \cdot \sigma_M \quad : \quad \forall A=1, \ldots , N \, ,
\end{equation*}
and thus
\begin{equation*}
\sum_B  M_{AB} c_B=  \int_{\partial \Omega} \xi_A \cdot \sigma_M - \int_{\Omega} \xi_A \cdot \rho =: \sigma_A  \quad : \quad \forall A=1, \ldots , N \, .
\end{equation*}
The system $\sum_B M_{AB} c_B= \sigma_A$ can be solved, and there is a solution
\begin{equation} \label{coeffsol}
c_A= \sum_B\left( M^{-1}\right)_{AB}\sigma_B \, ,
\end{equation}
$M_{AB}$ being positive definite, hence non-singular.

We reformulate the problem $(AP)$ as follows:
\begin{equation} \label{APtilde}
(\widetilde{AP}) \left\{ \begin{array}{rll} \frac{\partial \pi_{n+1}^{ab}}{\partial y^b} & =  \rho_n^{' \, a} & :\quad \textrm{in} \quad \Omega \, , \\ \left(\pi_{n+1}^{ab}-\sigma_n^{ab}\right)M_b & = 0 & : \quad \textrm{on} \quad \partial \Omega \, . \end{array} \right. 
\end{equation}
Writing $\psi_{n+1}^a=X^a$, $\rho_{n}^{' \, a}=\rho^{' \, a}$, $\sigma_n^{ab} M_b = \tau^a$, (\ref{APtilde}) is of the form
\begin{equation} \label{limitAPtilde}
\left\{ \begin{array}{rll} \frac{\partial}{\partial y^b}\left( \frac{\partial X^b}{\partial y^a}+\frac{\partial X^a}{\partial y^b} \right) & =  \rho^{' \, a} & :\quad \textrm{in} \quad \Omega \, , \\ \left( \frac{\partial X^b}{\partial y^a}+\frac{\partial X^a}{\partial y^b} \right)M_b & = \tau^a & : \quad \textrm{on} \quad \partial \Omega \, . \end{array} \right. 
\end{equation}
Under the restriction discussed above, which if $X$ is a Euclidean Killing field forces $X$ to vanish identically, the linear system (\ref{limitAPtilde}) has no kernel. The following estimate then holds (see \cite{ADN})
\begin{equation} \label{estimate}
|| X ||_{H_{s+2}(\Omega)} \leq C \left\{ || \rho' ||_{H_s(\Omega)}+ || \tau ||_{H_{s+1/2}(\partial \Omega)}\right\} \quad , \quad C=C(\Omega) \, .
\end{equation}
Applying this estimate to (\ref{APtilde}) and taking $\varepsilon$ suitably small we can prove contraction of the sequence $(\psi_n)$ in $H_{s+2}(\Omega)$ for $s> \frac n 2$, $H_s(\Omega)$ and $H_{s-1/2}(\partial \Omega)$ being under this condition Hilbert algebras.
\begin{equation*}
\left. \begin{array}{lrcl}  & \sigma_{n,A}  & \to & \sigma_A \\  & C_{n,A} & \to & C_A \\ \textrm{in} \, \, H_s(\Omega) & \rho_n^a & \to & \rho^a \\ \textrm{in} \, \, H_{s+1/2}(\partial \Omega) & \sigma_n^{ab} & \to & \sigma^{ab} \\ \textrm{in} \, \, H_{s+1}(\Omega) & \pi_{n+1}^{ab} & \to & \pi^{ab} \\  \textrm{in} \, \, H_s(\Omega) & \rho_n^{' \, a} & \to &   \rho^{' \, a} \end{array} \right\} \quad \textrm{for} \quad n \to \infty   \, ,
\end{equation*}
and, in the limit $n \to \infty$ we deduce in terms of the modified inhomogeneity $\rho'$
\begin{equation}\label{dopedpb}
\left\{ \begin{array}{rll} \frac{\partial \pi^{ab}}{\partial y^b} & = \rho^{' \, a} & :\quad \textrm{in} \quad \Omega \, , \\ \left(\pi^{ab}-\sigma^{ab}\right)M_b & = 0 & : \quad \textrm{on} \quad \partial \Omega \, . \end{array} \right. 
\end{equation}
We have:
\begin{equation} \label{rhoprime}
 \rho^{'\,a}=\rho^a + \sum_A c_A \xi_A^a \, ,
\end{equation}
and $c_A$ is given in terms of $\sigma_A$ by (\ref{coeffsol}).
From (\ref{nonlinpb}), (\ref{dopedpb}) and (\ref{rhoprime}), we find
\begin{equation} \label{dagger}
\left\{ \begin{array}{rll} 
\overset{m}{\nabla}_b S^{ab}=\frac{\partial \pi^{ab}}{\partial y^b}-\rho^a=\rho'^a-\rho^a & =:  X^a  & \quad : \, \textrm{in} \quad \Omega \, , \\
S^{ab} M_b & = 0  & \quad : \, \textrm{on} \quad \partial\Omega \, .
\end{array} \right.
\end{equation}
We have to show that
\begin{proposition} 
\begin{equation} \label{defX}
X^a = \sum_A c_A \xi_A^a = 0 \quad : \, a=1, \ldots , n \, ,
\end{equation}
if $\varepsilon$ is suitably small.
\end{proposition}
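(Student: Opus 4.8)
The plan is to test the equations (\ref{dagger}) against the infinitesimal rigid motions of the target $E^n$ and to exploit the invariance of the energy under composition on the left with isometries of $(E^n,\delta)$. For any isometry $\Psi$ of $E^n$ one has $(\Psi\circ\phi)^*\delta=\phi^*\Psi^*\delta=\phi^*\delta=m$, so the configuration $\gamma=i_{\phi,y}^*g$, and hence $e(\gamma)$, is unchanged, while the mass form $d\mu_\omega$ does not involve $\phi$ at all; therefore $E[\Psi\circ\phi]=E[\phi]$. Differentiating at $s=0$ a one-parameter group $\Psi_s$ of isometries generated by a Killing field $\xi$ of $(E^n,\delta)$ produces the variation $\dot x^i=\xi^i\circ\phi$, for which accordingly $\dot E=0$.

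On the other hand, let $\phi=id+\psi$ be the map produced by the iteration, which by (\ref{dagger}) satisfies $\overset{m}{\nabla}_aS^{ac}=X^c$ in $\Omega$ and $S^{ab}M_b=0$ on $\partial\Omega$. In the first variation formula (\ref{dstarS}) the boundary term vanishes by the traction-free boundary condition, while the interior term is evaluated using (\ref{QS}) and (\ref{gammam}) (the latter giving $\overset{m}{\nabla}_a(\partial x^i/\partial y^b)=0$), which yield $\overset{m}{\nabla}_aQ_i^a=g_{ij}(\partial x^j/\partial y^c)X^c$. Hence for every variation $\dot x^i$,
\begin{equation*}
\dot E=\int_\Omega \dot x^i\, g_{ij}\,\frac{\partial x^j}{\partial y^c}\,X^c\, d\mu_m .
\end{equation*}
Choosing $\dot x^i=\xi^i\circ\phi$ with $\xi$ successively a translation and a rotation of $E^n$, and using $\dot E=0$, this is precisely the statement that the total force and the total torque of the source $X$ vanish; equivalently, since $g_{ij}=\delta_{ij}$ and $d\mu_m=\sqrt{\det m}\,d^ny$,
\begin{equation*}
\int_\Omega \xi^i(\phi(y))\,\frac{\partial\phi^i}{\partial y^c}(y)\,X^c(y)\,\sqrt{\det m(y)}\; d^ny=0
\end{equation*}
for every Killing field $\xi$ of $(E^n,\delta)$.

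It remains to deduce $X=0$. Both the space of Killing fields of $(E^n,\delta)$ and the space of Killing fields of the flat domain $\Omega$ (relative to $id^*\delta$) are canonically the $N=\tfrac{n(n+1)}{2}$-dimensional Lie algebra of Euclidean motions, and under this identification the last display defines a bilinear form
\begin{equation*}
B_\varepsilon(\xi,\eta)=\int_\Omega \xi^i(\phi(y))\,\frac{\partial\phi^i}{\partial y^c}(y)\,\eta^c(y)\,\sqrt{\det m(y)}\; d^ny .
\end{equation*}
By the estimate (\ref{estimate}) with $s>\tfrac n2$ and the Sobolev embedding $H_{s+2}(\Omega)\hookrightarrow C^1(\overline{\Omega})$, the deviation $\psi=\phi-id$ is $O(\varepsilon^2)$ in $C^1$, so $\partial\phi/\partial y=I+O(\varepsilon^2)$, $\sqrt{\det m}=1+O(\varepsilon^2)$, and, $\xi$ being an affine function of the coordinates, $\xi^i(\phi(y))=\xi^i(y)+O(\varepsilon^2)$. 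Consequently $B_\varepsilon=M+O(\varepsilon^2)$, where $M_{AB}=\int_\Omega\xi_A\cdot\xi_B\, d^ny$ is positive definite; hence $B_\varepsilon$ is non-degenerate for $\varepsilon$ small. Since $X=\sum_A c_A\xi_A$ is a Killing field of $\Omega$ and the display above reads $B_\varepsilon(\xi,X)=0$ for all $\xi$, we conclude $X=0$, i.e.\ all $c_A=0$, for $\varepsilon$ sufficiently small; then (\ref{dagger}) reduces to the original problem $(P)$. The argument goes through verbatim in the anisotropic three-dimensional case, since only $m=\phi^*\delta$, not the crystalline structure separately, enters the invariance of $E$ and the first variation formula.

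The main obstacle is the quantitative step in the last paragraph: one needs not merely $\phi\to id$ but enough uniform smallness for $B_\varepsilon$ to remain invertible, which is exactly why the iteration has to be carried out in $H_{s+2}$ with $s>n/2$, so that $C^1$-control of $\phi$ is available; granting that, the rest is the rigid-motion invariance of the energy together with a finite-dimensional non-degeneracy argument.
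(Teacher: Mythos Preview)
Your proof is correct and is essentially the same as the paper's, just packaged differently. Both arguments establish the identity
\[
\int_\Omega \xi^i(\phi(y))\,\frac{\partial\phi^i}{\partial y^c}(y)\,X^c(y)\,d\mu_m=0
\]
for every Euclidean Killing field $\xi$: you obtain it from the invariance of $E$ under isometries of the target (Noether's theorem), whereas the paper obtains it by pulling $\xi$ back along $\phi$ to a Killing field $\zeta$ of $m$ and integrating $\zeta_a\overset{m}{\nabla}_bS^{ab}$ by parts; since $m_{ac}\zeta^a=\xi^i(\phi)\,\partial_c\phi^i$, these are literally the same integral. The closing step is also the same in content: the paper writes it as the self-bounding inequality $|X|_\infty\le C\varepsilon^2|X|_\infty$, while you phrase it as non-degeneracy of the bilinear form $B_\varepsilon=M+O(\varepsilon^2)$ on the finite-dimensional space of Killing fields. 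Your formulation has the minor advantage of making the role of rigid-motion invariance explicit and of applying verbatim to the anisotropic case without reference to the specific energy.
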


\begin{proof}
Let $\xi^a$ be a Killing field of $\delta_{ab}$. We have:
\begin{equation}\label{differenceKF}
\int_{\Omega} \delta_{ab} \xi^a X^b d^2y = \int_{\Omega} m_{ab} \xi^a X^b d^2\mu_m +O(\varepsilon^2 \abs{X}_{\infty}) \, ,
\end{equation}
where we say that a real-valued function $Q(\varepsilon,X)$ is $O(\varepsilon^2\left|X\right|_{\infty})$, if there is a constant $C$ such that
\begin{equation*}
\left| Q (\varepsilon,X)\right| \leq C \varepsilon^2 \left|X\right|_{\infty}  \, .
\end{equation*}
(\ref{differenceKF}) holds because $m_{ab}=\delta_{ab}+O(\varepsilon^2)$. Consider the vectorfield $\zeta$ on $\Omega$, the push-forward of which by $\phi$ to the Euclidean plane $E$ coincides with $\xi$
\begin{equation*}
\zeta^a= \xi^i \frac{\partial y^a}{\partial x^i} \quad  , \quad \left[ \, \xi^i = \frac{\partial \phi^i}{\partial y^a} \zeta^a \, \right] \, .
\end{equation*}
Then $\zeta$ is a Killing field of the metric $m=\phi^*\delta$ on $\Omega$. Hence:
\begin{equation} \label{KFzeta}
\int_{\Omega} m_{ab} \zeta^a X^b d\mu_m = \int_{\Omega} \zeta_a \overset{m}{\nabla}_b S^{ab} d\mu_m = - \int_{\Omega} \overset{m}{\nabla}_b \zeta_a S^{ab} d\mu_m + \int_{\partial \Omega} \zeta_a S^{ab} M_b \left. d\mu_m\right|_{\partial \Omega} \, , 
\end{equation}
where $\zeta_a=m_{ab}\zeta^b$. The integral on $\Omega$ in (\ref{KFzeta}) vanishes because
\begin{equation*}
 \overset{m}{\nabla}_b \zeta_a S^{ab}=\frac 1 2 \left( \overset{m}{\nabla}_b \zeta_a +  \overset{m}{\nabla}_a \zeta_b\right) S^{ab} = 0 \, .
\end{equation*}
The integral on $\partial \Omega$ vanishes by virtue of the boundary condition in (\ref{dagger}). Then, since 
\begin{equation*}
\xi^a = \zeta^a + O(\varepsilon^2) \, ,
\end{equation*}
we deduce:
\begin{equation} \label{orderKF}
\int_{\Omega} \delta_{ab} \xi^a X^b d^2y = \int_{\Omega} m_{ab} \zeta^a X^b d\mu_m + O(\varepsilon^2 \abs{X}_{\infty})=O(\varepsilon^2 \abs{X}_{\infty}) \, .
\end{equation}
However,
\begin{equation*}
X^a=\sum_A c_A \xi_A^a \quad : \quad a=1,\ldots,n \, ,
\end{equation*}
and since Killing fields are analytic functions, we have on a bounded domain $\Omega$:
\begin{equation} \label{KFcoeff}
\abs{X}_{\infty}=C \max_A \abs{c_A} \, .
\end{equation}
From (\ref{orderKF}),
\begin{equation}\label{estX}
\left|\int_{\Omega} \xi_A \cdot X \, d^ny \right| \leq C \varepsilon^2 \abs{X}_{\infty} \, .
\end{equation}
But from the definition of X in (\ref{defX}),
\begin{equation*}
\int_{\Omega}\xi_A \cdot X \, d^ny = \sum_{B} \int_{\Omega} \xi_A \xi_B c_B \, d^2y =\sum_{B}M_{AB} c_B = \sigma_A \, ,
\end{equation*}
and hence, from (\ref{estX}),
\begin{equation}\label{estsigmaX}
\abs{\sigma_A} \leq C \varepsilon^2 \abs{X}_{\infty} \, .
\end{equation}
On the other hand, we see from (\ref{coeffsol}) that
\begin{equation} \label{estcsigma}
\max_A \abs{c_A} \leq \tilde C \max_A \abs{\sigma_A} \, ,
\end{equation}
where $\tilde C=||M^{-1}||$. Finally, using (\ref{KFcoeff}), (\ref{estcsigma}) and (\ref{estsigmaX}), we obtain
\begin{equation*}
\abs{X}_{\infty}=C_1 \max_A \abs{c_A} \leq C_2 \max_A \abs{\sigma_A} \leq C_3 \varepsilon^2 \abs{X}_{\infty} \, ,
\end{equation*}
which implies $X \equiv 0$ for $\varepsilon$ sufficiently small ($C_3 \varepsilon^2 < 1 $). This finishes the proof.
\end{proof}

{\remark Concerning the regularity of the solution for $C^{\infty}$ domain $\Omega$ (smooth boundary) the solution is also $C^{\infty}$. For, $\psi \in H_{s+2}$, $s > n/2$ implies $\psi \in H_{s+3}(\Omega)$. Therefore, by induction, $\psi \in H_k(\Omega)$ for every $k$.}

\newpage

\bibliographystyle{my-h-elsevier}

\end{document}